\documentclass[journal]{IEEEtran}
\title{Revisiting the Misspecified Cram$\acute{\text{e}}$r-Rao Bound}
\author{Malaak Khatib, \IEEEmembership{Student Member, IEEE}, 
Nadav Harel, \IEEEmembership{Member, IEEE}, 
Joseph Tabrikian, \IEEEmembership{Fellow, IEEE}, 
and
Tirza Routtenberg, \IEEEmembership{Senior Member, IEEE}
\thanks{\footnotesize{The authors are with the School of Electrical and Computer Engineering, Ben-Gurion University of the Negev, Beer-Sheva 84105, Israel, e-mail: \{malaak;nadavhar\}@post.bgu.ac.il, \{joseph; tirzar\}@bgu.ac.il. This research was supported by the ISRAEL SCIENCE FOUNDATION (Grant No. 1148/22 and  2493/23).  
M. Khatib is a fellow of the AdR Women Doctoral Program.
 }
}\vspace{-0.5cm}}

\usepackage{amssymb,amsmath,rangecite,amsthm}
\newtheorem{theorem}{Theorem}
\newtheorem{definition}{Definition}
\newtheorem{claim}{Claim}
\newtheorem{proposition}{Proposition}
\newtheorem{conjecture}{Conjecture}
\usepackage[dvips]{color}
\usepackage{lscape}

\newcommand{\avec}{{\bf{a}}}

\newcommand{\xvec}{{\bf{x}}}

\newcommand{\vvec}{{\bf{v}}}
\newcommand{\gvec}{{\bf{g}}}

\newcommand{\hvec}{{\bf{h}}}

\newcommand{\etavec}{{\bf{\eta}}}

\newcommand{\onevec}{{\bf{1}}}
\newcommand{\zerovec}{{\bf{0}}}

\newcommand{\Amat}{{\bf{A}}}
\newcommand{\Bmat}{{\bf{B}}}

\newcommand{\Jmat}{{\bf{J}}}
\newcommand{\Imat}{{\bf{I}}}

\newcommand{\Wmat}{{\bf{W}}}

\newcommand{\Sigmamat}{{\bf{\Sigma}}}
\newcommand{\Lambdamat}{{\bf{\Lambda}}}
\renewcommand{\epsilon}{\varepsilon}
\newcommand{\ud}{\,\mathrm{d}}

\newcommand{\Tr}{{\rm Tr}}

\newcommand{\define}{\stackrel{\triangle}{=}}

\def\bsigma{{\mbox{\boldmath $\Sigma$}}}

\def\alphavecsmall{{\mbox{\boldmath {\scriptsize $\alpha$}}}}

\def\alphavecmicro{{\mbox{\boldmath \scalebox{0.95}{$\alpha$}}}}  

\def\lambdavec{{\mbox{\boldmath $\lambda$}}}

\def\gammavec{{\mbox{\boldmath $\gamma$}}}
\def\gammavecsmall{{\mbox{\boldmath {\scriptsize $\gamma$}}}}

\def\gammavecmicro{{\mbox{\boldmath \scalebox{0.95}{$\gamma$}}}}  

\def\etavec{{\mbox{\boldmath $\eta$}}}

\def\etavecmicro{{\mbox{\boldmath \scalebox{0.95}{$\eta$}}}}  

\def\phivec{{\mbox{\boldmath $\phi$}}}

\def\phivecmicro{{\mbox{\boldmath \scalebox{0.95}{$\phi$}}}}  

\def\thetavec{{\mbox{\boldmath $\theta$}}}

\def\thetavecsmall{{\mbox{\boldmath {\scriptsize $\theta$}}}}
\def\thetavectiny{{\mbox{\boldmath {\tiny $\theta$}}}}
\def\thetavecmicro{{\mbox{\boldmath \scalebox{0.95}{$\theta$}}}}  

\def\varthetavec{{\mbox{\boldmath $\vartheta$}}}
\def\varthetavecsmall{{\mbox{\boldmath {\scriptsize $\vartheta$}}}}

\def\varthetavecmicro{{\mbox{\boldmath \scalebox{0.95}{$\vartheta$}}}}  

\def\muvec{{\mbox{\boldmath $\mu$}}}

\def\thetavecsmall{{\mbox{\boldmath {\scriptsize $\theta$}}}}

\def\alphavecsmall{{\mbox{\boldmath {\scriptsize $\alpha$}}}}

\newcommand{\be}{\begin{equation}}
\newcommand{\ee}{\end{equation}}
\newcommand{\beqna}{\begin{eqnarray}}
\newcommand{\eeqna}{\end{eqnarray}}

\usepackage[bookmarks,colorlinks]{hyperref}
\usepackage[most]{tcolorbox}
\usepackage{subcaption}
\usepackage{cite}
\usepackage{amsfonts}
\usepackage{algorithmic}
\usepackage{lipsum}
\usepackage{enumerate, mathtools, float}
\usepackage[inline]{enumitem}
\usepackage[ruled,vlined]{algorithm2e}
\usepackage{xcolor,textcomp}
\usepackage{nameref}
\usepackage{tikz}
\usepackage[printonlyused]{acronym}
\usepackage{graphicx} 

\AtBeginDocument{%
  \setlength{\abovedisplayskip}{5pt plus 2pt minus 2pt}%
  \setlength{\belowdisplayskip}{5pt plus 2pt minus 2pt}%
  \setlength{\abovedisplayshortskip}{3pt plus 1pt minus 1pt}%
  \setlength{\belowdisplayshortskip}{3pt plus 1pt minus 1pt}%
  \setlength{\jot}{3pt}
}

\newcounter{myboxcounter}
\renewcommand{\themyboxcounter}{\arabic{myboxcounter}} 

\usepackage{etoolbox} 

\newtcolorbox{mybox}[2][]{%
  enhanced,
  colback=pink!10,
  colframe=brown!30,
  coltitle=darkgray,
  title=Box~\themyboxcounter: #2,
  boxsep=5pt,
  left=-1.5pt, right=-1.5pt, top=5pt, bottom=1pt,
  before skip=0pt,
  after skip=0pt,
  after=\par\nointerlineskip\noindent\vspace{-8pt}, 
  #1
}

\newtcolorbox{mysubbox}[2][]{%
  enhanced,
  colback=pink!10,
  colframe=brown!30,
  coltitle=darkgray,
  title=,
  boxsep=5pt,        
  left=1pt, right=1pt, top=1pt, bottom=0pt,
  #1
}
\AtEndEnvironment{mybox}{\vspace{-5pt}}
\newcounter{boxeq}

\makeatletter
\@addtoreset{boxeq}{myboxcounter}
\makeatother

\newenvironment{boxequation}
  {%
    \refstepcounter{boxeq}%
    \begingroup
    \begin{equation}
  }
  {%
    \end{equation}
    \endgroup
  }
\newenvironment{boxsubequations}
  {\refstepcounter{boxeq}%
   \begin{subequations}
  }
  {\end{subequations}}

   \usepackage{tabu}
	\usepackage{makecell}

\newcommand{\customlabel}[2]{%
    \refstepcounter{equation}%
    \tag{#2}%
    \label{#1}%
}
\newcommand{\customref}[1]{%
    \hyperref[#1]{\text{(C.\ref*{#1})}}%
}

\usetikzlibrary{calc, arrows.meta, intersections, patterns, positioning, shapes.misc, fadings, through,decorations.pathreplacing}
\usetikzlibrary{shapes,arrows}

\acrodef{crb}[CRB]{Cram$\acute{\text{e}}$r-Rao bound}
\acrodef{mcrb}[MCRB]{misspecified \ac{crb}}
\acrodef{fim}[FIM]{Fisher information matrix}
\acrodef{pdf}[PDF]{probability density function}
\acrodef{cdf}[CDF]{cumulative distribution function}
\acrodef{doa}[DOA]{direction-of-arrival}
\acrodef{mspe}[MSPE]{mean-squared-periodic-error}
\acrodef{mse}[MSE]{mean-squared-error}
\acrodef{wrt}[w.r.t.]{with respect to}
\acrodef{snr}[SNR]{signal-to-noise ratio}
\acrodef{kld}[KLD]{Kullback–Leibler divergence}
\acrodef{map}[MAP]{maximum \emph{a-posteriori}}
\acrodef{ml}[ML]{maximum likelihood}
\acrodef{mml}[MML]{misspecified maximum likelihood}
\acrodef{lhs}[l.h.s.]{left hand side}
\acrodef{ula}[ULA]{uniform linear array}

\usepackage[capitalise,noabbrev]{cleveref}

\crefname{subsection}{Subsection}{Subsections}
\Crefname{appendix}{Appendix}{Appendices}
\Crefname{objective}{Objective}{Objectives}
\Crefname{condition}{condition}{conditions}
\Crefname{reg_condition}{regularity Condition}{regularity Conditions}
\Crefname{conjecture}{Conjecture}{Conjectures}

\Crefname{equation}{}{}
\crefname{figure}{Fig.}{Figs.}

\usepackage{silence}
\begin{document}
\maketitle
\nopagebreak
\begin{abstract}
    Estimation under model misspecification arises in many signal processing problems, where the assumed observation model deviates from the true data-generating mechanism due to errors or simplifications. The misspecified Cram$\acute{\text{e}}$r-Rao bound (MCRB) is a widely recognized mean-squared-error (MSE) lower bound for this case, which has originally been used to describe the asymptotic behavior of the misspecified maximum likelihood (MML) estimator. Despite its widespread use, the MCRB lacks a rigorous characterization of the class of estimators for which it is valid.
    In this paper, we revisit the theory of parameter estimation under model misspecification and re-examine the foundations of the MCRB.
    We first demonstrate these limitations and examine a naive version of the MCRB, which relies only on local misspecified unbiasedness. We show that this bound is generally not tight and may be unattainable. To obtain a meaningful bound, we develop a new derivation based on the concept of pointwise equivalent models. By maximizing the naive bound for these models, we recover the classical MCRB, now supported by a constructive derivation, an explicit characterization of the associated estimator class, and an equality condition.
    This formulation establishes a formal link between local unbiasedness conditions and achievable bounds, offering new insights into the MCRB structure and its relevance to practical estimators. 
    Finally, we define the notion of an efficient misspecified estimator and show that if it exists, it is achieved by the MML estimator. 
\end{abstract}
\begin{IEEEkeywords}
    Non-Bayesian misspecified parameter estimation, misspecified Cram$\acute{\text{e}}$r-Rao bound (MCRB), performance bounds, local unbiasedness, pointwise equivalent models

\end{IEEEkeywords}
%
\vspace{-0.25cm}
\section{Introduction}
\label{sec:intro}
\vspace{-0.05cm}
    Performance bounds play a central role in estimation theory by characterizing the achievable \ac{mse}, providing benchmarks for practical estimators, serving as valuable tools for system design, and guiding the design of cognitive and adaptive systems in various applications \cite{Huleihel2013Optimal,Bell2015Cognitive,Greco2018Cognitive}. In classical settings, the \ac{crb} is the most widely used benchmark, mainly due to its simplicity and its asymptotic relation with the \ac{ml} estimator. 
    However, the \ac{crb} relies on the assumption that the statistical/observation model is \emph{correctly specified}.  This assumption is often not valid in practice due to modeling errors, unknown effects, or deliberate model simplifications.  In such cases, estimators operate under an assumed misspecified model, while their bias and \ac{mse} are determined by the true data-generating mechanism,  rendering classical performance bounds, such as the \ac{crb}, inapplicable. Thus, there is a need for rigorous performance bounds that account for misspecifications for system design and performance analysis.


    The asymptotic behavior of the \ac{ml} estimator under model misspecification, commonly referred to as the \ac{mml} estimator, has been extensively studied in the statistical literature since the seminal works of \cite{huber1967behavior,Akaike1998,white1982maximum}. In this setting, the \ac{mml}  estimator maximizes the assumed likelihood and asymptotically converges to the pseudo-true parameter, defined as the minimizer of the \ac{kld} between the true and assumed models. Moreover, its asymptotic \ac{mse} converges to the so-called Huber's sandwich covariance \cite{huber1967behavior}, which is given by the \ac{mcrb}. 
    The \ac{mcrb} has been introduced in \cite{vuong1986cramer} as a lower bound on the \ac{mse} under model misspecification for both parametric and nonparametric models. An alternative derivation of the \ac{mcrb}, based on the covariance form of the Cauchy–Schwarz inequality, has been proposed in \cite{Richmond_Howorits_jour}.  In recent years, the \ac{mcrb} has attracted growing attention within the signal processing community, leading to several important extensions.  These include the constrained \ac{mcrb} \cite{fortunati2016constrained,Richmond_Fusion_2018}, learning-based \ac{mcrb} formulation \cite{habi2023learned}, misspecified Bayesian \acp{crb} \cite{Fortunati_perf_bound,kantor2015prior,richmond2020misspecified,pajovic2018misspecified,tang2023parametric}, and an \ac{mcrb} for quantized observations \cite{rosenthal2025mcrb}. The MCRB has also been employed as a principled tool for model selection \cite{krauz2021composite,rosenthal2025model} and for the analysis of estimation after model selection \cite{harel2023non}. All these developments have enabled the derivation of tractable \acp{mcrb} in a wide range of signal-processing applications, such as in radars \cite{levy2023mcrb,Ren2015Performance,TDOA_2025}, channel estimation \cite{abed2021misspecified,Chlaily_Comon_Jutten2017,Wang2019Misspecified}, integrated sensing and communication \cite{isac},  fake features estimation \cite{hellkvist2023estimation}, \ac{doa} estimation \cite{rosenthal2025mcrb,Khatib_cyclicMCRB}, and time-delay and Doppler estimation \cite{fortunati2024efficiency}.

    Despite these advances, the existing \ac{mcrb} framework exhibits several fundamental limitations that hinder its theoretical clarity and practical applicability. In particular, existing formulations do not explicitly specify the class of estimators for which the \ac{mcrb} constitutes a valid lower bound. In practice, it is often implicitly assumed that the \ac{mcrb} applies to all locally unbiased estimators \ac{wrt} the pseudo-true parameter; however, this condition does not, in general, define the appropriate estimators' class, and as shown in this paper, the \ac{mcrb} fails to bound the \ac{mse} of such estimators.
    In addition, the \ac{mcrb} differs substantially from a naive extension of the classical \ac{crb} to the misspecified setting that relies solely on local misspecified unbiasedness \cite{vuong1986cramer,Richmond_Fusion_2018}. This naive bound is known to be generally not tight and may fail to properly capture the misspecification effect. Yet, a clear characterization of the limitations of the naive \ac{mcrb} and its relation with the \ac{mcrb} remains lacking. 
    Moreover, the original foundational derivation of the \ac{mcrb} in \cite{vuong1986cramer} relies on least-favorable distribution arguments that are not fully transparent from a signal-processing perspective and do not readily extend to other classes of performance bounds, such as the Barankin bound. The state-of-the-art covariance-inequality-based derivation in \cite{Richmond_Howorits_jour} is closely aligned with classical estimation theory, but depends on a first-order expansion of the log-likelihood derivative around the \ac{mml} estimator and its asymptotic properties \cite{white1982maximum}, rendering the resulting bound inherently asymptotic and estimator-specific. Finally, unlike the \ac{crb}, the relationship between the \ac{mcrb} and efficiency under model misspecification and the bound attainability conditions has not been explicitly established.

    In this paper, we present a unified, estimator-agnostic, and non-asymptotic derivation of the \ac{mcrb}
    that explicitly identifies the class of estimators for which the bound is valid. 
    We first discuss the limitations of the existing \ac{mcrb} theory. Then, we introduce a naive \ac{mcrb}, obtained by directly extending classical \ac{crb} arguments under local misspecified unbiasedness, and show that this bound is generally not tight and may be unattainable by estimators operating solely under the assumed model. To resolve these limitations, we introduce the concept of \emph{pointwise equivalent models}, an auxiliary family of \acp{pdf} that coincides with the true model only at the operating point. By maximizing the naive \ac{mcrb} over this family, we recover the tightest achievable lower bound and show that it coincides with the classical \ac{mcrb}. Importantly, this construction yields an explicit characterization of the class of estimators for which the \ac{mcrb} is a valid \ac{mse} lower bound and clarifies the conditions under which the bound is attainable. Finally, we define a principled notion of efficiency under model misspecification and show that under standard regularity conditions, the \ac{mml} estimator is efficient in this generalized sense.

    The main contributions of this paper are as follows:
    \begin{itemize}[leftmargin=*]
    \item \textbf{Limitations of existing MCRB formulations}: 
    We identify fundamental limitations of current \ac{mcrb} formulations, including non-transparent derivations and the lack of a well-defined class of estimators for which the bound holds. We demonstrate via a simple counterexample that commonly used misspecified unbiasedness conditions do not characterize the appropriate estimator class.

    \item \textbf{Analysis of the naive MCRB}: 
    We derive the naive MCRB \cite{vuong1986cramer} by solving a constrained \ac{mse} minimization problem that enforces only local misspecified unbiasedness. 
    We show that this bound is always looser than or equal to the  \ac{mcrb}, reduces to the oracle \ac{crb} when the pseudo-true and true parameters coincide, and is generally unattainable due to its equality condition.

    \item \textbf{New derivation of the \ac{mcrb} via pointwise equivalent models}: 
    We introduce the concept of pointwise equivalent models and show that maximizing the naive misspecified bound over this class recovers the classical \ac{mcrb}. This yields a transparent, estimator-agnostic derivation, together with an explicit equality condition and an estimator-class characterization.

   \item \textbf{Misspecified efficiency}: 
    We introduce a principled notion of efficiency under model misspecification via the equality conditions of the \ac{mcrb}. We show that, under standard regularity conditions, the \ac{mml} estimator is efficient in this sense, establishing a direct analogue of the classical ML--CRB efficiency relationship for misspecified models.

    \end{itemize}
        {\em{Organization:}} The remainder of this paper is structured as follows. 
        In Section \ref{model_and_prob_section}, we provide the background on estimation under model misspecification and the \ac{mcrb}, and discuss the limitations of the existing theory. We introduce the naive \ac{mcrb} and discuss its limitations in Section \ref{naive_MCRB_section}. Next, we introduce the concept of pointwise equivalent models and use it to derive a revised \ac{mcrb} in Section~\ref{sec:equivalent_models}. Building on this framework, we formalize the revised \ac{mcrb} theorem and characterize efficient estimators in Section~\ref{sub:Final_MCRB}. Finally, we conclude the paper in Section~\ref{Conclusions}.

         {\em{Notation:}} Throughout this paper, we denote vectors by boldface lowercase letters and matrices by boldface uppercase letters. The $m$th element of the vector $\avec$ and the $(m,k)$th element of the matrix $\Amat$ are denoted by $a_{m}$ and $[\Amat]_{m,k}$, respectively. The identity matrix of dimensions $K \times K$ is denoted by $\Imat_{K}$, $\zerovec$ denotes a vector/matrix of zeros, and $\onevec$ denotes a vector/matrix of ones. The notations $(\cdot)^{T}$, $(\cdot)^{-1}$ and ${\text{Tr}}(\cdot)$ denote the transpose, inverse, and trace operators, respectively. For any symmetric matrices $\Amat$ and $\Bmat$ of the same dimension, the notation $\Amat \succeq \Bmat$ implies that $\Amat - \Bmat$ is a positive semidefinite matrix. The gradient of a vector function $\gvec$ of $\alphavecmicro$, $\nabla_{\alphavecsmall} \gvec(\alphavecmicro)$, is a matrix in which $[\nabla_{\alphavecsmall} \gvec(\alphavecmicro)]_{m,k} = \frac{\partial g_{m}(\alphavecmicro)}{\partial \alpha_{k}}$. The  Hessian matrix of a scalar function $g$ of $\alphavecmicro$  $\nabla_{\alphavecsmall} \! {\nabla_{\alphavecsmall}}^{\!\!\!\! T} g(\alphavecmicro) = {\nabla_{\alphavecsmall}}^{\!\!\!\! 2} \,g(\alphavecmicro)$. We denote $\nabla_{\alphavecsmall} \gvec(\alphavecmicro) \big|_{\alphavecsmall = \alphavecsmall_{0}} = \nabla_{\alphavecsmall} \gvec(\alphavecmicro_{0})$. The Gaussian \ac{pdf} of a random vector $\xvec \sim \mathcal{N}(\muvec, \Sigmamat)$ is denoted by $p(\xvec) = \mathcal{N}(\xvec, \muvec, \Sigmamat)$.

\vspace{-0.25cm}
\section{problem formulation and background}
\label{model_and_prob_section}
    In this section, we formulate the problem of non-Bayesian estimation under model misspecification.
    In Subsection \ref{model_subsec}, the problem formulation is presented, and in Subsection \ref{background_subsec}, the \ac{mcrb} is introduced. Finally, in Subsection \ref{issues_subsection} we present the limitations of the current version of the \ac{mcrb} framework and the crucial need for a new bound derivation.
    \vspace{-0.25cm}
    \subsection{Problem Formulation}
    \label{model_subsec}
        Non-Bayesian estimation under misspecified setting can be formulated as follows: Let $(\Omega_{\xvec},\mathcal{F}, P_{\varthetavecsmall})$ denote a probability space, where $\Omega_{\xvec}$ is the observation space, $\mathcal{F}$ is the $\sigma$-algebra, and $P_{\varthetavecsmall}$ is the true probability measure on $\mathcal{F}$, which is parameterized by a real deterministic parameter vector $\varthetavec \in \Omega_{\varthetavecsmall} \subseteq \mathbb{R}^{N_{1}}$, in which $\Omega_{\varthetavecsmall}$ is the parameter space. We assume that the \ac{pdf} \ac{wrt} $P_{\varthetavecsmall}$ exists and is denoted by $p(\xvec;\varthetavecmicro)$. We denote the true unknown deterministic parameter by $\varthetavec_{0}$ in the interior of $\Omega_{\varthetavecsmall}$.
        In addition, we consider the random observation vector, $\xvec \in \Omega_{\xvec}$, which is distributed according to $p(\xvec;\varthetavecmicro)$.
        Under the misspecified setting, $\xvec$ is assumed to be distributed according to the \ac{pdf} $f(\xvec;\thetavecmicro)$,  which is parameterized by a deterministic parameter vector $\thetavec \in \Omega_{\thetavecsmall} \subseteq \mathbb{R}^{N_{2}}$. 
        It should be noted that $N_2$ is not necessarily equal to $N_1$.
        Thus, $p(\xvec;\varthetavecmicro)$ is considered to be the {\em{true \ac{pdf}}}, while $f(\xvec;\thetavecmicro)$ is referred to as the {\em{assumed \ac{pdf}}}, considered to be strictly positive for any $\xvec \in \Omega_{\xvec}$ and $ \thetavec \in \Omega_{\thetavecsmall}$. 
        In the following, $\mathbb{E}_{p; \varthetavecsmall}[h(\xvec)]$ denotes the expectation \ac{wrt} the true \ac{pdf}, $p(\xvec; \varthetavecmicro)$, for any measurable function $h(\cdot)$, i.e. 
        $\mathbb{E}_{p; \varthetavecsmall}[h(\xvec)] = \int_{\Omega_{\xvec}} h(\alphavecmicro) p(\alphavecmicro; \varthetavecmicro) \ud \alphavecmicro$. Additionally, $\mathbb{E}_{p}[h(\xvec)]$ denotes the expectation \ac{wrt} the true \ac{pdf}, $p(\xvec; \varthetavecmicro_{0})$, for any measurable function $h(\cdot)$, i.e. $\mathbb{E}_{p}[h(\xvec)] = \mathbb{E}_{p; \varthetavecsmall_{0}}[h(\xvec)]$.
    \vspace{-0.25cm}
    \subsection{Background: MCRB}
    \label{background_subsec}
        In this subsection, we present the foundational background and key definitions behind the development of the \ac{mcrb}. 
        In estimation under the model misspecification setting, the parameter $\thetavec$ associated with the assumed \ac{pdf} does not necessarily coincide with the true parameter $\varthetavec$ governing the true \ac{pdf}. As an alternative, the pseudo-true parameter, which is the point that minimizes the \ac{kld} between the true and the assumed \acp{pdf}, is commonly used (see, e.g.  \cite{Fortunati_perf_bound,vuong1986cramer}).
        \begin{definition}[Pseudo-true parameter]
        \label{def:psuedo-true}
            For a true \ac{pdf} $p(\xvec;\varthetavecmicro)$ and an assumed \ac{pdf} $f(\xvec;\thetavecmicro)$, the pseudo-true parameter is defined as
            \beqna
            \label{eq:pseudo_def}
                \thetavec_*(\varthetavecmicro) = \arg\min_{\thetavecsmall \in \Omega_\thetavectiny} \mathcal{D}_{KL} \big(p(\cdot ;\varthetavecmicro) \| f(\cdot;\thetavecmicro) \big) ,
            \eeqna
            where $\mathcal{D}_{KL} (\cdot \| \cdot )$ is the \ac{kld} \cite[p. 211-212]{kay},
            and $f(\xvec;\thetavecmicro)$ is assumed to be strictly positive on the support of $p(\xvec;\varthetavecmicro)$,  $\forall \thetavec \in \Omega_{\thetavecsmall}$.
        \end{definition}
        
        Since $\mathbb{E}_{p; \varthetavecsmall} \big[ \log p(\xvec;\varthetavecmicro) \big]$ is not a function of $\thetavec$, the expression in \eqref{eq:pseudo_def} can be rewritten as \cite{harel2023non}
        \beqna
        \label{eq:pseudo_def_gen}
            \thetavec_*(\varthetavecmicro) \hspace{-0.25cm} 
            &=& \hspace{-0.25cm} \arg\max_{\thetavecsmall \in \Omega_\thetavectiny} \mathbb{E}_{p; \varthetavecsmall} \big[\log f(\xvec;\thetavecmicro) \big] .
        \eeqna
        It should be noted that Definition \ref{def:psuedo-true} is well-defined only if the minimization problem in \eqref{eq:pseudo_def} has a unique solution. This uniqueness is critical for the identifiability of the estimation problem, as discussed in \cite{RAN_identifiability}. In particular, the pseudo-true parameter in \eqref{eq:pseudo_def_gen} for $\varthetavec = \varthetavec_{0}$ is given by $\thetavec_{*}^{0} \define \thetavec_*(\varthetavecmicro_{0})$.
       
        The \ac{mcrb} relies on the following regularity conditions \cite{vuong1986cramer}:
        {\renewcommand{\theenumi}{C.\arabic{enumi}} 
        \begin{enumerate}
            \item
            \label{cond1}
                The pseudo-true parameter, defined in \eqref{eq:pseudo_def}, is unique.                     
            \item
            \label{cond2}
            The log-likelihoods, $\log f(\xvec;\thetavecmicro)$ and $\log p(\xvec;\varthetavecmicro)$, are twice differentiable \ac{wrt} $\thetavecmicro$ and $\varthetavecmicro$, respectively. 
                Furthermore, the expectations $ \mathbb{E}_{p;\varthetavecsmall} \big[\nabla_{\thetavecsmall} \log f(\xvec;\thetavecmicro) \big] $ and $ \mathbb{E}_{p;\varthetavecsmall} \big [{\nabla_{\thetavecsmall}}^{\!\!\!\! 2} \log f(\xvec;\thetavecmicro)\big]$ exist.
            
            \item
            \label{cond3} 
                The misspecified Hessian-based information matrix $\Amat(\thetavecmicro_{*}^{0})$ is non-singular, where
                \beqna
                \label{eq:def_A_theta}
                    \Amat(\thetavecmicro) \define -{\mathbb{E}}_{p} \big[ \nabla_{\thetavecsmall}^{2}  \log f(\xvec;\thetavecmicro) \big].
                \eeqna   
            \item
            \label{cond4}
                The operations of integration \ac{wrt} $\xvec$ and differentiation \ac{wrt} $\thetavec$ (and $\varthetavec$) of any measurable function $\gvec(\xvec;\thetavecmicro)$ (and $\hvec(\xvec;\varthetavecmicro)$) can be interchanged as follows: \begin{subequations}
                    \beqna
                    \label{eq:cyc_unbias_cond_1}
                        \hspace{-3.5cm} \nabla_{\thetavecsmall} \!\! \int_{\Omega_{\xvec}} \!\!\! \gvec(\alphavecmicro; \thetavecmicro) p(\alphavecmicro;\varthetavecmicro) \ud \alphavecmicro \hspace{-0.25cm} &=& \hspace{-0.35cm} \int_{\Omega_{\xvec}} \!\!\! \nabla_{\thetavecsmall} \gvec(\alphavecmicro; \thetavecmicro) p(\alphavecmicro;\varthetavecmicro) \ud \alphavecmicro,
                    \\ \hspace{-1.2cm}
                        \nabla_{\varthetavecsmall} \!\! \int_{\Omega_{\xvec}} \!\!\! \hvec(\alphavecmicro; \varthetavecmicro) p(\alphavecmicro;\varthetavecmicro) \ud \alphavecmicro \hspace{-0.25cm} &=& \hspace{-0.35cm} \int_{\Omega_{\xvec}} \!\!\! \nabla_{\varthetavecsmall} \Big\{ \! \hvec(\alphavecmicro; \varthetavecmicro) p(\alphavecmicro;\varthetavecmicro)\! \Big \} \! \ud \alphavecmicro.
                    \eeqna
                \end{subequations}    
        \end{enumerate}}
        In addition, the unbiasedness in a misspecified setting is defined as follows.
        \begin{definition}[Misspecified unbiasedness \cite{Fortunati_perf_bound}]
        \label{def:miss_unbias}
            Let the assumed \ac{pdf} $f(\xvec;\thetavecmicro)$ satisfy Condition \ref{cond1}. An estimator $\hat{\thetavec}(\xvec):\Omega_{\xvec} \rightarrow{\mathbb{R}^{N_{2}}}$ is a misspecified-unbiased estimator of $\thetavec_{*}^{0}$ \ac{wrt} $p(\xvec;\varthetavecmicro_{0})$ if
            \beqna
            \label{uniform_def_type1}
                \mathbb{E}_{p} \big[ \hat{\thetavec}(\xvec) -\thetavec_{*}^{0}\big] = \zerovec.
            \eeqna
        \end{definition}

        \begin{definition}[MCRB \cite{vuong1986cramer}]
        \label{mcrb_theorem}
            Let the assumed \ac{pdf} $f(\xvec;\thetavecmicro)$ satisfy Conditions \ref{cond1}-\ref{cond4}. 
            Then, the \ac{mcrb} is given by
            \beqna
            \label{eq:def_MCRB}
                {\text{\textbf{MCRB}}}(\thetavecmicro_{*}^{0}) \define \Amat^{-1}(\thetavecmicro_{*}^{0}) \Bmat(\thetavecmicro_{*}^{0}) \Amat^{-1}(\thetavecmicro_{*}^{0}),
            \eeqna
            where $\Amat(\thetavecmicro) $ is defined in \eqref{eq:def_A_theta}, and
            \beqna
            \label{eq:def_B_theta}
                \Bmat(\thetavecmicro) \define \mathbb{E}_{p} \Big[{\nabla_{\thetavecsmall}}^{\!\!\! T} \log f(\xvec;\thetavecmicro) \,\, \nabla_{\thetavecsmall} \log f(\xvec;\thetavecmicro) \Big]
            \eeqna
            is the gradient-based misspecified information term.
        \end{definition}
        
        The expectation operator for computing the different terms in Definition \ref{mcrb_theorem} is taken \ac{wrt} the true probability measure. It can be seen that in the conventional, well-specified case where $f(\xvec;\thetavecmicro^{0}_{*}) = p(\xvec;\varthetavecmicro_{0})$, the two information matrices are identical,  i.e. $ \Bmat(\thetavecmicro^{0}_{*}) = \Amat(\thetavecmicro^{0}_{*})$, and the r.h.s. of \eqref{eq:def_MCRB} is the \ac{crb}.

        The \ac{mml} estimator is defined as the \ac{ml} estimator of the unknown parameter under the assumed model \cite{white1982maximum}:
        \beqna
        \label{eq:def_MML}
            \hat{\thetavec}_{\text{MML}}(\xvec) \define \arg\max_{\thetavecsmall \in \Omega_\thetavectiny} f(\xvec; \thetavecmicro).
                 \vspace{-0.2cm}
        \eeqna
        Under mild regularity conditions, the \ac{mml} estimator converges asymptotically to the pseudo-true parameter, and its asymptotic misspecified covariance equals the \ac{mcrb} \cite{white1982maximum, huber1967behavior, Akaike1998}.

        \vspace{-0.2cm}
    \subsection{Limitations of the MCRB}
    \label{issues_subsection}
    \vspace{-0.05cm}
        The \ac{mcrb} has been widely investigated as the state-of-the-art tool for analyzing estimation performance under model misspecification \cite{Fortunati_Scatter_Matrix, Fortunati_perf_bound}. In many papers, it is a common belief that the following conjecture holds.
        \begin{conjecture}
        \label{instead_of_theorem}
            Let the assumed \ac{pdf} $f(\xvec;\thetavecmicro)$ satisfy Conditions \ref{cond1}-\ref{cond4} and $\hat{\thetavec}(\xvec)$ be any (locally) misspecified unbiased estimator as defined in Definition \ref{def:miss_unbias}. 
            Then,
           \beqna
            \label{eq:MSE_inq_MCRB}
                {\text{\textbf{MSE}}} \big(\hat{\thetavecmicro}(\xvec),\thetavecmicro_{*}^{0} \big) \succeq \text{\textbf{MCRB}}(\thetavecmicro_{*}^{0}),
            \eeqna
            where the \ac{mcrb} is defined in \eqref{eq:def_MCRB}, and 
            \beqna
            \label{eq:miss_MSE}
                {\text{\textbf{MSE}}} \big(\hat{\thetavecmicro}(\xvec),\thetavecmicro_{*}^{0} \big) \define \mathbb{E}_{p} [ (\hat{\thetavec}(\xvec) - \thetavec_{*}^{0} )(\hat{\thetavec}(\xvec) - \thetavec_{*}^{0} )^{T} ]
            \eeqna
            is the \ac{mse} of an estimator $\hat{\thetavec}(\xvec)$ evaluated at $\thetavec_{*}^{0}$.
        \end{conjecture}
        
        In the following, we show that this commonly-used conjecture is inaccurate. In particular, we demonstrate through a counterexample that the \ac{mcrb} may fail to bound the \ac{mse} of a misspecified unbiased estimator, revealing fundamental gaps in its current theoretical formulation and its practical application in signal processing. Specifically, in Subsection \ref{class_subsec}, we address the need to clearly state the conditions and specify the class of estimators for which the \ac{mcrb} provides a lower bound. In Subsection \ref{derivation_subsec}, we discuss the importance of providing a more direct, unified derivation of the bound. 
        
        A revised \ac{mcrb} theorem that resolves these issues is presented later (Theorem \ref{thm:Our_MCRB} in Section \ref{sub:Final_MCRB}), based on the developments in Sections \ref{naive_MCRB_section} and \ref{sec:equivalent_models}. 
        This theorem explicitly characterizes the class of estimators for which the \ac{mcrb} is valid, which differs from the class implied by Conjecture \ref{instead_of_theorem}.
         \begin{figure}
                \refstepcounter{myboxcounter} 
                \begin{mybox}[label=box:counter_example]{Counterexample for Conjecture \ref{instead_of_theorem}}
                    Consider the estimation problem where the true and assumed \acp{pdf} are
                    \begin{boxsubequations}
                        \beqna
                            p(\xvec; \vartheta) = \mathcal{N}(\xvec, \vartheta \onevec, \Sigmamat) , \hspace{0.15cm}\label{true_ex1}
                            \\
                            f(\xvec; \theta) = \mathcal{N}(\xvec, \theta \onevec, \sigma^2 \Imat_N), \label{assumed_ex1}
                        \eeqna
                    \end{boxsubequations}   
                    
                    \noindent where $\Sigmamat = \text{diag}\{\varepsilon, \sigma^{2}, \dots, \sigma^{2}\}$, and $\sigma^{2},\varepsilon > 0$.
                    The unknown parameters under the true and assumed \acp{pdf} are $\vartheta$ and $\theta$, respectively. By substituting this model in \eqref{eq:pseudo_def_gen}, it follows that in this case, $\theta_{*}(\vartheta) = \vartheta$ although the associated \acp{pdf} are different. By substituting this model in \eqref{eq:def_A_theta} and \eqref{eq:def_B_theta}, we obtain that the Hessian-and gradient-based information terms for this case are given by 
                    \begin{boxsubequations}
                        \beqna
                            \label{A_sc}
                            A(\theta_{*}) \hspace{-0.25cm} &=& \hspace{-0.25cm} \frac{N}{\sigma^{2}}, 
                            \\
                            \label{B_sc}
                            B(\theta_{*}) \hspace{-0.25cm} &=& \hspace{-0.25cm} \frac{1}{\sigma^{4}} \onevec^{T} \Sigmamat \onevec = \frac{\varepsilon + (N-1) \sigma^{2}}{\sigma^{4}}.
                        \eeqna
                    \end{boxsubequations}
                    
                    \noindent Then, substitution of \eqref{A_sc} and \eqref{B_sc} into \eqref{eq:def_MCRB}, results in the following \ac{mcrb} 
                    \begin{boxequation}
                    \label{mcrb_ex1}
                        \text{MCRB}(\theta_{*}^{0}) = \frac{1}{N^{2}} \onevec^{T} \Sigmamat \onevec = \frac{\varepsilon + (N-1) \sigma^{2}}{N^{2}}.
                    \end{boxequation}
                    Now, consider the estimator based only on the first entry of $\xvec$, $\hat{\theta}_{1}(\xvec) = x_{1}$. This estimator is a misspecified unbiased estimator of $\theta_{*}^{0}$ according to Definition \ref{def:miss_unbias}, since by using \eqref{true_ex1}, we have $\mathbb{E}_{p}[x_{1}] = \vartheta_{0} = \theta_{*}^{0}$. In addition, the \ac{mse} of this estimator is
                    \begin{boxequation}
                    \label{mse_ex1}
                        {\text{MSE}} (\hat{\theta}_{1}(\xvec),\theta_{*}^{0} ) = \mathbb{E}_{p} [ \big(x_1 - \vartheta_{0} \big)^2 ] = \varepsilon.
                    \end{boxequation}
                    
                    For $\varepsilon < \frac{\sigma^{2}}{N + 1}$, the \ac{mse} in \eqref{mse_ex1} is lower than the \ac{mcrb} in \eqref{mcrb_ex1}. Thus, the \ac{mcrb} from Definition \ref{mcrb_theorem} does not provide a valid lower bound on the class of misspecified unbiased estimators in this example, and the inequality in \eqref{eq:MSE_inq_MCRB} may not hold. 
                    
                    \hspace{0.05em} The \ac{mml} estimator for this case is $\hat{\theta}_{\text{MML}}(\xvec) = \frac{1}{N} \onevec^{T} \xvec$, which is also a misspecified unbiased estimator of $\theta_{*}^{0}$ with  ${\text{MSE}} (\hat{\theta}_{\text{MML}}(\xvec),\theta_{*}^{0}) = \text{MCRB}(\theta_{*}^{0})$ (see \eqref{mcrb_ex1}). 
                    
                    \hspace{0.05em} In Fig. \ref{fig:misspecified_gaussian_example}, we present the empirical \ac{mse} of $\hat{\theta}_{1}(\xvec)$ and $\hat{\theta}_{\text{MML}}(\xvec)$, the oracle CRB of the true model, and the MCRB from \eqref{mcrb_ex1} versus the number of samples $N$. It can be seen that the MCRB fails to bound the MSE of $\hat{\theta}_{1}(\xvec)$, while the oracle \ac{crb} is not tight and is irrelevant for this problem.
                    
                    \vspace{0.15cm}
                    \centering
                    \includegraphics[width=0.85\linewidth]{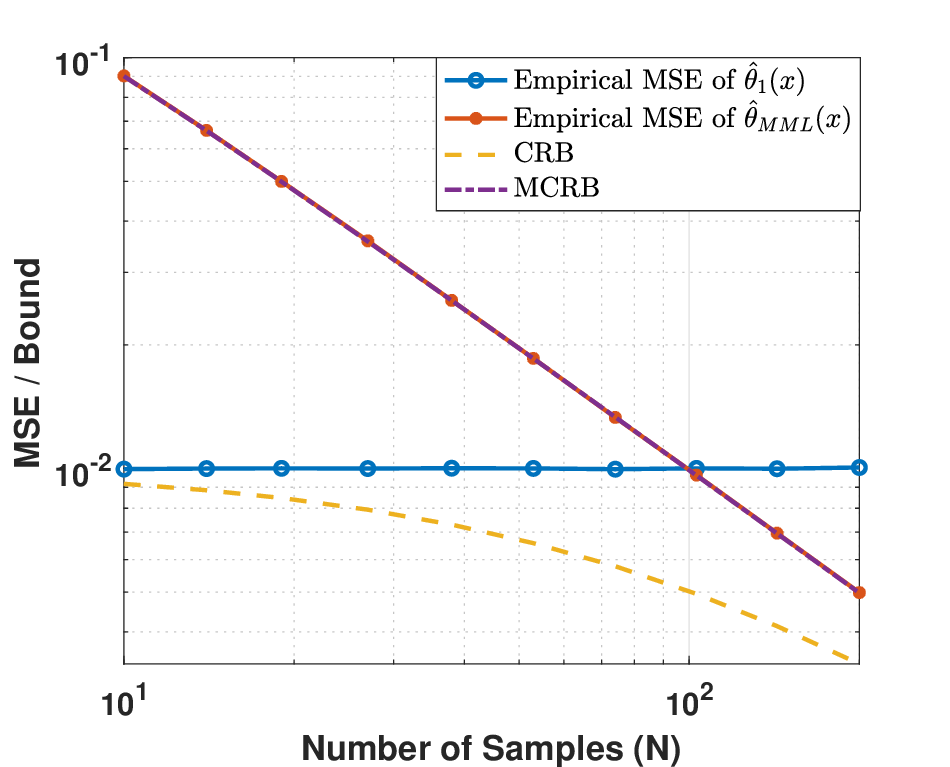}
                    \caption{MSE and bounds for the above example. }
                    \label{fig:misspecified_gaussian_example}

                \end{mybox} 
            \end{figure}    
        \subsubsection{Class of estimators}
        \label{class_subsec}
            In Conjecture \ref{instead_of_theorem}, the only requirement imposed on the estimator is that $\hat{\thetavec}(\xvec)$ satisfy the misspecified unbiasedness condition defined in Definition \ref{def:miss_unbias}. However, recent works show that the \ac{mcrb} does not always provide a valid lower bound on the \ac{mse} of practical misspecified unbiased estimators \cite{rosentha2024asymptotically}. 
            A simple example in which a misspecified unbiased estimator has a lower \ac{mse} is presented in Box \ref{box:counter_example}.
            This counterexample arises from the fact that the class of estimators whose \ac{mse} is bounded by the \ac{mcrb} has not been clearly characterized. 
            As a result, the \ac{mcrb} cannot, in general, be  regarded as a valid lower bound on the \ac{mse} without explicitly specifying the class of estimators or models to which it effectively applies. This issue is further clarified in Subsection \ref{sub:Gen_Models}, where we mathematically show that, in the general case, the bound derived under the unbiasedness condition alone is less tight than the \ac{mcrb}.

            Identification of the relevant class of estimators is essential both for the theoretical rigor of the \ac{mcrb} and for practical guidance on designing estimators
            under model misspecification. 
            Furthermore, such identification could support the development of alternative bounds, e.g. the Barankin bound, tailored to different estimator classes.
           
        \subsubsection{Methodological Derivation}
        \label{derivation_subsec}
            The \ac{mcrb} has been obtained through multiple different derivations, primarily in \cite{vuong1986cramer} and \cite{Richmond_Howorits_jour}. The first derivation of the \ac{mcrb} as a lower bound on the \ac{mse} was provided in \cite{vuong1986cramer} and later formulated in signal-processing terms in \cite{Richmond_Fusion_2018}. This derivation was initially established for the case of a parametric true model. An extension for the case of the non-parametric true model was performed by employing a specific choice of a distribution class to obtain the \ac{mcrb} in Definition \ref{mcrb_theorem}. 
            Despite its importance, this derivation suffers from several shortcomings.
            First, it relies on the introduction of a distribution family (``least favorable" distribution),  where it is unclear how to choose this family, and it is not standard in the signal processing community. This reliance complicates interpretation and direct comparison with other bounds.
            Second, the limitations of the bound derived for the parametric case are not explicitly discussed, leaving unclear why further refinement of the bound is necessary. Finally, the class of estimators to which the \ac{mcrb} applies (see Subsection \ref{class_subsec}) is not specified, and only misspecified unbiasedness, as defined in Definition \ref{def:miss_unbias}, is mentioned as a requirement which is insufficient, as discussed above.
            
            In \cite{Richmond_Howorits_jour}, an alternative derivation of the \ac{mcrb} was proposed using the covariance inequality and an auxiliary function, which fits the classical approach of developing covariance inequality-based performance bounds in signal processing and estimation theory. However, the final step in the derivation is based on a first-order approximation around the \ac{mml} estimator, utilizing its relation with the \ac{mcrb} from \cite{white1982maximum}.
            Despite these significant contributions, there remains a gap for a clear and rigorous derivation of the \ac{mcrb} that is not based on considering a specific estimator (here, \ac{mml} estimator), and without asymptotic approximations. 
  \vspace{-0.2cm}
\section{Naive \ac{mcrb} under Local Misspecified Unbiasedness}
\label{naive_MCRB_section}
  \vspace{-0.05cm}
    As described in Section \ref{model_and_prob_section}, there is a need for a rigorous derivation of the \ac{mcrb} that clearly states the conditions under which the bound holds and specifies the relevant class of estimators.
    In this section, we develop an \ac{mse} lower bound based solely on local misspecified unbiasedness. 
    To this end, in Subsection \ref{sub:Unbiasedness}, we present the local unbiasedness under model misspecification. Then, in Subsection \ref{sub:Gen_Models}, we develop the corresponding \ac{mse} bound as the solution to a minimization problem. Finally, in Subsection \ref{limitation_Th2_subsection}, we demonstrate that, in the general case, this development may lead to a non-informative bound. 
    Nevertheless, understanding why this naive approach of using only local misspecified unbiasedness is inadequate, provides essential background that motivates the derivation of the \ac{mcrb} in Section \ref{sec:equivalent_models}.
    \vspace{-0.25cm}
    \subsection{Misspecified Unbiasedness}
    \label{sub:Unbiasedness}
        In order to derive a Cram$\acute{\text{e}}$r-Rao type lower bound on the \ac{mse}, an appropriate notion of local unbiasedness should be defined. 
        Similarly to local mean-unbiasedness (see e.g. \cite{menni2011new}), the misspecified unbiasedness can be defined locally by requiring that \eqref{uniform_def_type1} will be satisfied in the vicinity of the true parameter $\varthetavec_0$, as defined in the following definition.
        \begin{definition}[Naive local misspecified unbiasedness]
        \label{local_m_unbiasedness1}
            Let the assumed \ac{pdf} $f(\xvec;\thetavecmicro)$ satisfy Condition \ref{cond1}. An estimator $\hat{\thetavec}(\xvec)$ is a naive {\em{locally}} misspecified-unbiased estimator of $\thetavec_{*}^{0}$ \ac{wrt} $p(\xvec;\varthetavecmicro_{0})$ if at $\varthetavec_0\in\Omega_\varthetavecsmall$ it satisfies
            \begin{subequations}
                \beqna
                    {\mathbb{E}}_{p} \big[\hat{\thetavec}(\xvec) - \thetavec_{*}^{0} \big] \hspace{1.4cm} &=& \zerovec , \label{pointwise_m_unbiasedness_1}
                \\
                    \nabla_{\varthetavecsmall} \mathbb{E}_{p; \varthetavecsmall} \big[\hat{\thetavec}(\xvec) - \thetavec_{*}(\varthetavec) \big]\big|_{\varthetavecsmall=\varthetavecsmall_0} &=& \zerovec . \label{local_m_unbiasedness_1}
                \eeqna
            \end{subequations} 
        \end{definition}
        
        \begin{claim}
        \label{claim_der_pt}
            Let the assumed \ac{pdf} $f(\xvec;\thetavecmicro)$ satisfy Conditions \ref{cond1}-\ref{cond4}. The requirement in \eqref{local_m_unbiasedness_1} can be reformulated as
            \beqna
            \label{17_tmp}
                {\mathbb{E}}_{p} \big[ \big(\hat{\thetavec}(\xvec)-\thetavec_{*}^{0}\big) \, \nabla_{\varthetavecsmall} \log p(\xvec; \varthetavecmicro_{0}) \big] \hspace{2.5cm}\nonumber\\= {\Amat}^{-1} ( \thetavecmicro_{*}^{0} ) \Bmat_{p;f}( \thetavecmicro_{*}^{0},\varthetavecmicro_{0} ),
            \eeqna  
            where $\Amat(\thetavecmicro) $ is defined in \eqref{eq:def_A_theta} and
            \beqna
            \label{eq:def_B_p_f}
                \Bmat_{p;f}(\thetavecmicro,\varthetavecmicro) \define {\mathbb{E}}_{p} \big[ {\nabla_{\thetavecsmall}}^{\!\!\! T} \log f(\xvec;\thetavecmicro) \,\, \nabla_{\varthetavecsmall} \log p(\xvec;\varthetavecmicro) \big].
            \eeqna
        \end{claim}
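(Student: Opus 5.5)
We split the derivative in \eqref{local_m_unbiasedness_1} into two terms and evaluate each separately. The first term is $\nabla_{\varthetavecsmall}\mathbb{E}_{p;\varthetavecsmall}[\hat{\thetavec}(\xvec)]$. The second is $\nabla_{\varthetavecsmall}\thetavec_*(\varthetavecmicro)$, evaluated at $\varthetavec_0$.

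\textbf{First term.} Apply the second interchange in Condition \ref{cond4} with $\hvec(\xvec;\varthetavecmicro)=\hat{\thetavec}(\xvec)$, which does not depend on $\varthetavec$. This gives
\[
\nabla_{\varthetavecsmall}\mathbb{E}_{p;\varthetavecsmall}[\hat{\thetavec}(\xvec)]\big|_{\varthetavecsmall_0}=\mathbb{E}_{p}\big[\hat{\thetavec}(\xvec)\nabla_{\varthetavecsmall}\log p(\xvec;\varthetavecmicro_0)\big].
\]
Taking $\hvec\equiv 1$ in the same interchange shows that the true score has zero mean, $\mathbb{E}_p[\nabla_{\varthetavecsmall}\log p(\xvec;\varthetavecmicro_0)]=\zerovec$. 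We may therefore subtract the constant $\thetavec_*^0$ inside the expectation at no cost. The first term is thus exactly the left-hand side of \eqref{17_tmp}. Condition \eqref{local_m_unbiasedness_1} then becomes the statement that this left-hand side equals $\nabla_{\varthetavecsmall}\thetavec_*(\varthetavecmicro_0)$.

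\textbf{Second term.} The main step is computing the Jacobian of the pseudo-true parameter by implicit differentiation. By \eqref{eq:pseudo_def_gen}, Condition \ref{cond1} (uniqueness), Condition \ref{cond2} (differentiability), and the first interchange in Condition \ref{cond4}, the pseudo-true parameter, being an interior maximizer, satisfies the first-order condition
\[
\mathbb{E}_{p;\varthetavecsmall}\big[\nabla^T_{\thetavecsmall}\log f(\xvec;\thetavecmicro)\big]\big|_{\thetavecsmall=\thetavecsmall_*(\varthetavecsmall)}=\zerovec
\]
identically in $\varthetavec$ near $\varthetavec_0$. Define $\Psi(\thetavecmicro,\varthetavecmicro)=\int\nabla^T_{\thetavecsmall}\log f(\alphavecmicro;\thetavecmicro)\,p(\alphavecmicro;\varthetavecmicro)\ud\alphavecmicro$.
\begin{itemize}
\item Its Jacobian in $\thetavec$ at $(\thetavec_*^0,\varthetavec_0)$ is $-\Amat(\thetavecmicro_*^0)$, which follows from the first interchange.
\item Its Jacobian in $\varthetavec$ at that point is $\Bmat_{p;f}(\thetavecmicro_*^0,\varthetavecmicro_0)$, which follows from the second interchange in the form $\nabla_{\varthetavecsmall}p=p\nabla_{\varthetavecsmall}\log p$.
\end{itemize}
Because $\Amat(\thetavecmicro_*^0)$ is nonsingular (Condition \ref{cond3}), the implicit function theorem guarantees that $\thetavec_*(\cdot)$ is differentiable at $\varthetavec_0$. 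Differentiating $\Psi(\thetavec_*(\varthetavecmicro),\varthetavecmicro)=\zerovec$ by the chain rule gives $-\Amat\,\nabla_{\varthetavecsmall}\thetavec_*+\Bmat_{p;f}=\zerovec$, that is,
\[
\nabla_{\varthetavecsmall}\thetavec_*(\varthetavecmicro_0)=\Amat^{-1}(\thetavecmicro_*^0)\Bmat_{p;f}(\thetavecmicro_*^0,\varthetavecmicro_0).
\]

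\textbf{Expected obstacle.} The hardest part is justifying this differentiability. The implicit function theorem needs $\Psi$ to be continuously differentiable jointly in $(\thetavec,\varthetavec)$ near the operating point. We would read this joint $C^1$ property as implied by Conditions \ref{cond2} and \ref{cond4}, stating it explicitly as part of the regularity assumptions if needed.

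Substituting the two terms back into \eqref{local_m_unbiasedness_1} yields \eqref{17_tmp}. Every step above is an equivalence, so \eqref{17_tmp} is also sufficient for \eqref{local_m_unbiasedness_1}, which justifies calling it a reformulation.
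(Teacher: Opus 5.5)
Your proof follows the paper's own route. Differentiating under the integral via Condition \ref{cond4} reduces \eqref{local_m_unbiasedness_1} to $\mathbb{E}_{p}[(\hat{\thetavec}(\xvec)-\thetavec_{*}^{0})\nabla_{\varthetavecsmall}\log p(\xvec;\varthetavecmicro_{0})]=\nabla_{\varthetavecsmall}\thetavec_{*}(\varthetavecmicro_{0})$, and then $\nabla_{\varthetavecsmall}\thetavec_{*}=\Amat^{-1}\Bmat_{p;f}$ is substituted. The paper takes this Jacobian formula, \eqref{eq:der_PT}, directly from \cite{vuong1986cramer}, whereas you derive it by implicit differentiation of the first-order condition. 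One caveat on that derivation: the implicit function theorem only gives a differentiable branch of stationary points through $(\thetavec_{*}^{0},\varthetavec_{0})$, so identifying that branch with the maximizer $\thetavec_{*}(\cdot)$ also needs $\thetavec_{*}(\cdot)$ to be continuous at $\varthetavec_{0}$ (e.g., a well-separated unique maximum), a point the paper likewise leaves implicit in its citation.
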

        
        \begin{IEEEproof}
            The proof appears in Appendix \ref{App:der_PT}.
        \end{IEEEproof}

        {\em{Discussion:}} 
            Definition \ref{local_m_unbiasedness1} requires $\hat{\thetavec}(\xvec)$ to be a mean-unbiased estimator of the pseudo-true parameter, $\thetavec_{*}^{0}$, and that its first-order derivative of the estimation error w.r.t.\ $\varthetavec$ vanishes at $\varthetavec_0$. However, these local unbiasedness conditions rely on the true log-likelihood function, $\log p(\xvec;\varthetavecmicro)$. Moreover, the local unbiasedness condition in \eqref{local_m_unbiasedness_1} requires the estimator's bias derivative to vanish \ac{wrt} perturbations in $\varthetavec$, which implicitly assumes knowledge of the true \ac{pdf} $p(\xvec;\varthetavecmicro)$. This is an unrealistic assumption when the model is misspecified.
            Hence, these conditions do not fundamentally incorporate the misspecification aspect, beyond the use of the pseudo-true parameter. 
            This can also be seen from the local unbiasedness condition in \eqref{17_tmp}, which imposes a constraint on the correlation between the partial derivative of the true log-likelihood and the estimation error. 

            On another note, using the estimator bias \ac{wrt} the true parameter $\varthetavec_{0}$,  conditions \eqref{pointwise_m_unbiasedness_1}-\eqref{local_m_unbiasedness_1} can be written as
            \begin{subequations}
                \beqna
                    \mathbb{E}_{p} \big[\hat{\thetavec}(\xvec)-\varthetavec_{0} \big] \hspace{0.55cm} &=& \thetavec_{*}^{0} - \varthetavec_{0} , \label{pointwise_m_unbiasedness_1_b}
                \\
                    \hspace{-0.65cm} \nabla_{\varthetavecsmall} \mathbb{E}_{p; \varthetavecsmall} \big[\hat{\thetavec}(\xvec) - \varthetavec \big]\big|_{\varthetavecsmall=\varthetavecsmall_0} \hspace{-0.3cm} &=&  \nabla_{\varthetavecsmall} \big\{ \thetavec_{*}(\varthetavec)-\varthetavec \big\} \big|_{\varthetavecsmall=\varthetavecsmall_0} . \label{local_m_unbiasedness_1_b}
                \eeqna
            \end{subequations}  
            These conditions can be interpreted as simply enforcing a specific bias on the estimator of the true unknown parameter. Thus, it aligns with a setting of conventional biased estimation \cite{KAY_ELDAR}. Hence, the model misspecification aspect is not fully integrated into the family of estimators. 
            

        \vspace{-0.25cm}
    \subsection{Lower Bound under Locally Misspecified Unbiasedness}
    \label{sub:Gen_Models}
        { \samepage In this subsection,  we derive
        a lower bound on the \ac{mse} under a naive locally misspecified unbiasedness restriction, as defined in Definition \ref{local_m_unbiasedness1}. We use the notation 
        \beqna
        \label{J_def}
            \Jmat_{p}(\varthetavecmicro)\define \mathbb{E}_{p;\varthetavecsmall} \big[ {\nabla_{\varthetavecsmall}}^{\!\!\! T} \log p(\xvec;\varthetavecmicro) \,\, \nabla_{\varthetavecsmall} \log p(\xvec;\varthetavecmicro) \big]
        \eeqna
        for the true-model \ac{fim}.}
        \begin{theorem}[Naive MCRB]
        \label{mcrb_theorem_5_terms}
            Let the assumed \ac{pdf} $f(\xvec;\thetavecmicro)$ satisfy Conditions \ref{cond1}-\ref{cond4}, and $\hat{\thetavec}(\xvec)$ be a naive locally misspecified unbiased estimator, defined in Definition \ref{local_m_unbiasedness1}. Then,
            \beqna
            \label{eq:MSE_inq_MCRB2}
                {\text{\textbf{MSE}}} \big(\hat{\thetavecmicro}(\xvec),\thetavecmicro_{*}^{0} \big) \succeq {\text{\textbf{nMCRB}}}_{p}(\thetavecmicro_{*}^{0}, \varthetavecmicro_{0}),
            \eeqna
            where 
            \beqna
            \label{nMCRB_def}
                {\text{\textbf{nMCRB}}}_{p}(\thetavecmicro_{*}^{0},\varthetavecmicro_{0}) \hspace{5.5cm} \nonumber \\ \define {\Amat}^{-1}(\thetavecmicro_{*}^{0}) \Bmat_{p;f}(\thetavecmicro_{*}^{0}, \varthetavecmicro_{0}) \Jmat_{p}^{-1}(\varthetavecmicro_{0}) \Bmat_{p;f}^{T}(\thetavecmicro_{*}^{0},\varthetavecmicro_{0}) {\Amat}^{-1}(\thetavecmicro_{*}^{0}),
            \eeqna
            and $\Amat(\cdot)$, $\Bmat_{p;f}(\cdot,\cdot)$, and $\Jmat_{p}(\cdot)$ are defined in \eqref{eq:def_A_theta}, \eqref{eq:def_B_p_f}, and \eqref{J_def}, respectively.
            Equality is attained in \eqref{eq:MSE_inq_MCRB2} {\em{iff}}
            \begin{align}
            \label{eq:MVU_bCRB}
                \hat{\thetavec}(\xvec) = \thetavec_{*}^{0}  + \hspace{-0.05cm} {\Amat}^{-1}(\thetavecmicro_{*}^{0}) \Bmat_{p;f}(\thetavecmicro_{*}^{0},\varthetavecmicro_{0}) \Jmat_{p}^{-1}(\varthetavecmicro_{0}) \, {\nabla_{\varthetavecsmall}}^{\!\!\! T} \! \log p(\xvec;\varthetavecmicro_{0}).
            \end{align}
        \end{theorem}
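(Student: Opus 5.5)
The plan is to run the classical covariance-inequality argument for the Cramér–Rao bound, with the true score $\nabla_{\varthetavecsmall}^{T}\log p(\xvec;\varthetavecmicro_0)$ as the auxiliary statistic and Claim \ref{claim_der_pt} as the source of the cross-correlation. Write $\mathbf{e}(\xvec)\define\hat{\thetavec}(\xvec)-\thetavec_*^0$ for the estimation error and $\mathbf{s}(\xvec)\define\nabla_{\varthetavecsmall}^{T}\log p(\xvec;\varthetavecmicro_0)$ for the true score. By Condition \ref{cond4} with $\hvec\equiv 1$, we have $\mathbb{E}_p[\mathbf{s}(\xvec)]=\zerovec$ and $\mathbb{E}_p[\mathbf{s}\mathbf{s}^T]=\Jmat_p(\varthetavecmicro_0)$, which is assumed nonsingular. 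Since $\hat{\thetavec}$ is naive locally misspecified unbiased, Claim \ref{claim_der_pt} gives
\begin{equation*}
\mathbb{E}_p[\mathbf{e}\,\mathbf{s}^T]=\Amat^{-1}(\thetavecmicro_*^0)\Bmat_{p;f}(\thetavecmicro_*^0,\varthetavecmicro_0)\define\mathbf{C}.
\end{equation*}
Note that the bound and the equality condition involve only this correlation constraint. The mean-unbiasedness \eqref{pointwise_m_unbiasedness_1} is what makes $\mathbf{e}$ zero mean, so that the MSE is a genuine covariance.

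Next, define the residual $\mathbf{r}(\xvec)\define\mathbf{e}(\xvec)-\mathbf{C}\Jmat_p^{-1}(\varthetavecmicro_0)\mathbf{s}(\xvec)$. Expanding $\mathbb{E}_p[\mathbf{r}\mathbf{r}^T]\succeq\zerovec$ and substituting the two moment identities above gives
\begin{equation*}
\mathbb{E}_p[\mathbf{e}\mathbf{e}^T]-\mathbf{C}\Jmat_p^{-1}\mathbf{C}^T-\mathbf{C}\Jmat_p^{-1}\mathbf{C}^T+\mathbf{C}\Jmat_p^{-1}\Jmat_p\Jmat_p^{-1}\mathbf{C}^T\succeq\zerovec.
\end{equation*}
This simplifies to $\textbf{MSE}\succeq\mathbf{C}\Jmat_p^{-1}\mathbf{C}^T$, which is exactly \eqref{nMCRB_def}. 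Equivalently, one can minimize $\mathbf{u}^T\textbf{MSE}\,\mathbf{u}$ subject to the linear constraint $\mathbb{E}_p[\mathbf{e}\mathbf{s}^T]=\mathbf{C}$ using a Lagrangian in the Hilbert space $L^2(p(\cdot;\varthetavecmicro_0))$; this matches the ``constrained minimization'' framing used in the paper. Either route yields the same projection of $\mathbf{e}$ onto the span of the score.

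For the equality condition, note that equality in the matrix inequality holds iff $\mathbb{E}_p[\mathbf{r}\mathbf{r}^T]=\zerovec$, i.e., iff $\mathbf{r}(\xvec)=\zerovec$ almost surely under $p(\xvec;\varthetavecmicro_0)$. This is precisely \eqref{eq:MVU_bCRB}. For the converse, one checks that this estimator is itself naive locally misspecified unbiased. Its mean is $\thetavec_*^0$ because the score has zero mean, and its correlation with $\mathbf{s}$ is $\mathbf{C}\Jmat_p^{-1}\Jmat_p=\mathbf{C}$. By Claim \ref{claim_der_pt}, this recovers \eqref{local_m_unbiasedness_1}, so the estimator attains the bound.

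The main obstacle is justifying the use of Claim \ref{claim_der_pt} in both directions, i.e., that the correlation identity is equivalent to \eqref{local_m_unbiasedness_1}. This requires differentiating $\thetavec_*(\varthetavecmicro)$ at $\varthetavecmicro_0$ via the implicit function theorem applied to the first-order condition $\mathbb{E}_{p;\varthetavecsmall}[\nabla_{\thetavecsmall}\log f(\xvec;\thetavecmicro)]=\zerovec$, using Conditions \ref{cond1}--\ref{cond4}. Since that claim is already established, it can be taken as given. What remains is to state the implicit finite-second-moment assumption $\mathbb{E}_p\|\hat{\thetavec}\|^2<\infty$ and the nonsingularity of $\Jmat_p(\varthetavecmicro_0)$.
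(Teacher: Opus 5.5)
Your proof is correct, but it takes the covariance-inequality route. The paper (Appendix~\ref{App:naive_mcrb_theorem}) explicitly chooses the constrained-minimization route instead. It scalarizes the MSE as $\mathbf{a}^T\,\mathrm{MSE}\,\mathbf{a}$ and forms a Lagrangian with the mean constraint and the derivative constraint, the latter rewritten via Claim~\ref{claim_der_pt}. Minimizing pointwise in $\mathbf{x}$ gives $\mathbf{a}\mathbf{a}^T(\hat{\boldsymbol{\theta}}-\boldsymbol{\theta}_*^0)=\boldsymbol{\lambda}+\boldsymbol{\Lambda}\mathbf{s}$. The paper then shows $\boldsymbol{\lambda}=\mathbf{0}$ and solves $\boldsymbol{\Lambda}=\mathbf{a}\mathbf{a}^T\mathbf{C}\mathbf{J}_p^{-1}$. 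Finally, it argues that a single estimator solves the problem for every $\mathbf{a}$ iff \eqref{eq:MVU_bCRB} holds, and reads the bound off as that estimator's MSE.

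Your residual identity $\mathbb{E}_p[\mathbf{r}\mathbf{r}^T]=\mathrm{MSE}-\mathbf{C}\mathbf{J}_p^{-1}\mathbf{C}^T$ gives the matrix inequality in one step, without scalarization. It also gives the equality condition ($\mathbf{r}=\mathbf{0}$ a.s.) directly. This avoids the paper's looser step of passing from a stationarity condition involving the rank-one matrix $\mathbf{a}\mathbf{a}^T$ to an $\mathbf{a}$-independent estimator. Your remark that only the correlation constraint matters is exactly the paper's finding $\boldsymbol{\lambda}=\mathbf{0}$.

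In return, the paper's framing exhibits the bound directly as the optimal value of the MSE problem over the naive class. That is how it justifies calling \eqref{nMCRB_def} the tightest bound under local misspecified unbiasedness. You reach the same conclusion through your converse check that the estimator in \eqref{eq:MVU_bCRB} belongs to the class and attains the bound.

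The finite-second-moment and nonsingular-$\mathbf{J}_p(\boldsymbol{\vartheta}_0)$ assumptions you flag are needed implicitly in the paper's argument as well.
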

        
        \begin{IEEEproof}
            The proof appears in Appendix \ref{App:naive_mcrb_theorem}.
        \end{IEEEproof}
        
        It should be noted that the naive \ac{mcrb} in \eqref{nMCRB_def} is developed in Theorem 3.1 in \cite{vuong1986cramer} based on Cauchy-Schwarz inequality, and serves as an intermediate step in the derivation of the \ac{mcrb}.
        This bound can also be derived from the biased \ac{crb} of the true model, as shown in \cite{Richmond_Fusion_2018}. That is, assuming the bias is given by \eqref{pointwise_m_unbiasedness_1_b}-\eqref{local_m_unbiasedness_1_b} and substituting the gradient of this bias in the biased \ac{crb} \cite[p. 37-39]{kay} results in the bound in \eqref{eq:b-CRB}.
        Our derivation via a minimization approach highlights that, under the sole requirement of local misspecified unbiasedness, the bound in \eqref{nMCRB_def} is the tightest 
        \ac{mse} lower bound.
    \vspace{-0.5cm}
    \subsection{Properties and Limitations of the Naive MCRB }
    \label{limitation_Th2_subsection}
        In this subsection, we investigate the properties of the naive \ac{mcrb} from Theorem \ref{mcrb_theorem_5_terms}. We show how it relates to the classical \ac{mcrb} of Definition \ref{mcrb_theorem} and to the oracle \ac{crb} associated with the true model. We also discuss its main drawback: although it provides a lower bound on the class of locally misspecified unbiased estimators, it can become non-informative in many realistic model misspecification scenarios. 
        Thus, this subsection underscores the need to define the relevant class of misspecified estimators and the appropriate unbiasedness to obtain a meaningful and attainable bound.
        \subsubsection{Order Relation}
            The following claim states that the \ac{mcrb} is a tighter lower bound than the naive \ac{mcrb}.
            \begin{claim}
            \label{cl:MCRB_vs_bCRB}
                The classical \ac{mcrb} in \eqref{eq:def_MCRB} is always tighter than or equal to the naive \ac{mcrb} in \eqref{nMCRB_def}.
            \end{claim}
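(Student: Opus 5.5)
We want to show $\mathbf{MCRB}(\thetavec_*^0)\succeq \mathbf{nMCRB}_p(\thetavec_*^0,\varthetavec_0)$. Since both bounds share the outer factors $\Amat^{-1}(\thetavec_*^0)$, and congruence by a symmetric nonsingular matrix (Condition \ref{cond3}) preserves the order $\succeq$, it suffices to prove the inner inequality
\begin{equation*}
\Bmat(\thetavec_*^0)\succeq \Bmat_{p;f}(\thetavec_*^0,\varthetavec_0)\,\Jmat_p^{-1}(\varthetavec_0)\,\Bmat_{p;f}^T(\thetavec_*^0,\varthetavec_0).
\end{equation*}
Here we assume $\Jmat_p(\varthetavec_0)$ is nonsingular, as is implicit in Theorem \ref{mcrb_theorem_5_terms}.

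The plan is a covariance (Schur complement) argument. Define the random vectors $\mathbf{u}=\nabla_{\thetavecsmall}^T\log f(\xvec;\thetavec_*^0)$ and $\mathbf{v}=\nabla_{\varthetavecsmall}^T\log p(\xvec;\varthetavec_0)$. By the definitions \eqref{eq:def_B_theta}, \eqref{eq:def_B_p_f} and \eqref{J_def},
\begin{equation*}
\Bmat=\mathbb{E}_p[\mathbf{u}\mathbf{u}^T],\qquad \Bmat_{p;f}=\mathbb{E}_p[\mathbf{u}\mathbf{v}^T],\qquad \Jmat_p=\mathbb{E}_p[\mathbf{v}\mathbf{v}^T].
\end{equation*}
The joint second-moment matrix $\mathbb{E}_p\big[[\mathbf{u}^T,\mathbf{v}^T]^T[\mathbf{u}^T,\mathbf{v}^T]\big]$ is positive semidefinite. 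Because $\Jmat_p\succ 0$, its Schur complement $\Bmat-\Bmat_{p;f}\Jmat_p^{-1}\Bmat_{p;f}^T$ is also positive semidefinite.

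Equivalently, one can argue directly: for the residual $\mathbf{w}=\mathbf{u}-\Bmat_{p;f}\Jmat_p^{-1}\mathbf{v}$, expanding $\mathbb{E}_p[\mathbf{w}\mathbf{w}^T]\succeq\zerovec$ gives exactly the desired inequality. This is the multivariate Cauchy--Schwarz inequality. Note that centering is unnecessary, since we work with second moments; in any case both scores have zero mean. The mean of $\mathbf{u}$ vanishes by the first-order optimality of the pseudo-true parameter in \eqref{eq:pseudo_def_gen} together with Condition \ref{cond4}, and the mean of $\mathbf{v}$ vanishes by the standard score identity.

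Finally, multiply the inner inequality on both sides by $\Amat^{-1}(\thetavec_*^0)$ to conclude. Equality holds iff $\mathbf{w}=\zerovec$ almost surely, that is, iff $\nabla_{\thetavecsmall}\log f(\xvec;\thetavec_*^0)$ is an affine-free linear function of the true score. This is consistent with the well-specified case, where the two bounds both reduce to the CRB.

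The main (mild) obstacle is justifying the existence of the required moments and the invertibility of $\Jmat_p$. These follow from Conditions \ref{cond2}--\ref{cond4} and the hypotheses of Theorem \ref{mcrb_theorem_5_terms}. If $\Jmat_p$ were singular, the argument still goes through with a pseudo-inverse.
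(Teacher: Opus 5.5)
Your proposal is correct and is essentially the paper's proof. In Appendix~\ref{App:CS-MCRB-bCRB}, the paper applies the matrix Cauchy--Schwarz inequality with $\phivec(\xvec)=\nabla_{\thetavecsmall}^{T}\log f(\xvec;\thetavec_{*}^{0})$ and $\etavec(\xvec)=\nabla_{\varthetavecsmall}^{T}\log p(\xvec;\varthetavec_{0})$ to obtain $\Bmat(\thetavec_{*}^{0})\succeq\Bmat_{p;f}\Jmat_{p}^{-1}\Bmat_{p;f}^{T}$, and then multiplies both sides on the left and right by $\Amat^{-1}(\thetavec_{*}^{0})$ using Condition~\ref{cond3}, which is exactly your Schur-complement/residual argument.
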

            
            \begin{IEEEproof}
                The proof appears in Appendix \ref{App:CS-MCRB-bCRB}.
            \end{IEEEproof}
            
            Claim \ref{cl:MCRB_vs_bCRB} indicates that in the general case, the naive \ac{mcrb} offers a weaker guarantee on the estimation performance than the classical \ac{mcrb}, which more accurately captures the asymptotic behavior of misspecified estimators. In particular, the \ac{mcrb} characterizes the asymptotic performance of the \ac{mml} estimator \cite{white1982maximum}. This order relation can be explained by the fact that the naive approach leverages information that is not available to an estimator solely using $f(\xvec;\thetavecmicro)$, thus failing to reflect misspecification constraints and is too optimistic. 
 
        \subsubsection{Case of $\thetavec_{*} = \varthetavec$}
        \label{subsubsec:pseudo_true_equals_true}
            In this subsection, we describe the case of a misspecified setting where the pseudo-true parameter equals the true parameter, i.e. $\thetavec_{*} = \varthetavec$, even though the model is not correctly specified, i.e. $f(\xvec; \thetavecmicro_{*}) \neq p(\xvec; \varthetavecmicro)$. An example of this case is presented in Box \ref{box:counter_example_cont_1}, where we apply the naive \ac{mcrb} to the example in Box \ref{box:counter_example}.
            \begin{figure}
                \refstepcounter{myboxcounter} 
                \begin{mybox}[label=box:counter_example_cont_1]{Inappropriateness of the Naive MCRB}
                We revisit Box \ref{box:counter_example} to show that, although the naive \ac{mcrb} provides a valid lower bound, it is not an appropriate bound in this case.
                    Substituting the model from Box \ref{box:counter_example} in \eqref{eq:def_B_p_f} and \eqref{J_def}, we obtain
                    \vspace{-0.25cm}
                    \begin{boxsubequations}
                        \beqna
                            \label{J_box1}
                            J_{p}(\vartheta) \hspace{-0.1cm} &=& \hspace{-0.15cm} (N-1)/ \sigma^{2} + 1/\varepsilon,
                            \\
                            \label{Bpf_box1}
                            B_{p;f}(\theta_{*}, \vartheta) \hspace{-0.15cm} &=& \hspace{-0.15cm} N/\sigma^{2}.
                        \eeqna
                    \end{boxsubequations}
                    
                    \noindent By substituting \eqref{A_sc}, \eqref{J_box1}, and \eqref{Bpf_box1} in \eqref{nMCRB_def}, we obtain that the naive \ac{mcrb} is given by
                    \begin{boxequation}
                        {\text{nMCRB}}_{p}(\theta_{*}^{0},\vartheta_{0})= \frac{\varepsilon}{(N-1)\varepsilon/ \sigma^{2} + 1}.
                        \label{nMCRB_Box2}
                    \end{boxequation}
                    Now, consider the estimator $\hat{\theta}(\xvec) = x_{1}$. First, it is locally unbiased as defined in Definition \ref{local_m_unbiasedness1}, since
                    \begin{boxsubequations}
                        \beqna
                            {\mathbb{E}}_{p} [x_1 - \theta_{*}^{0}] \!\!\!\! &=& \!\!\! 0,
                            \\
                            \label{con2_unb_ex1}
                            \mathbb{E}_{p} [ (x_{1} - \theta_{*}^{0}) \nabla_{\vartheta}\! \log p(\xvec;\vartheta_{0})  ] \!\!\!\! \hspace{-0.05cm}&=&\hspace{-0.05cm} \!\!\! A^{-1}(\theta_{*}^{0}) B_{p;f}(\theta_{*}^{0}, \vartheta_{0})  =  1. \nonumber \\
                        \eeqna
                    \end{boxsubequations}
                    
                    \noindent Second, the \ac{mse} of the estimator $\hat{\theta}(\xvec) = x_1$ from \eqref{mse_ex1} is bounded by the naive \ac{mcrb} from \eqref{nMCRB_Box2}:
                    \begin{boxequation}
                        \text{MSE}(x_{1}, \theta_{*}^{0}) = \varepsilon \geq \frac{ \varepsilon }{(N-1)\varepsilon/ \sigma^{2} + 1} .
                    \end{boxequation}
                    
                    \noindent Thus, the naive \ac{mcrb} is a valid lower bound for the estimator $\hat{\theta}(\xvec) = x_{1}$, as expected.
    
                    \hspace{0.05em} In addition, the \ac{ml} estimator under the true model,
                    \begin{boxequation}
                    \label{ML_box2}
                        \hat{\vartheta}_{ML}(\xvec) = \frac{1}{\onevec^{T} \Sigmamat^{-1} \onevec} \onevec^{T} \Sigmamat^{-1} \xvec,
                    \end{boxequation}
                    
                    \noindent is also a naive misspecified unbiased estimator as defined in Definition \ref{local_m_unbiasedness1}.
                    Moreover, in this case the naive \ac{mcrb} coincides with the oracle \ac{crb} \ac{wrt} $p(\xvec; \vartheta_{0})$ and is attained by the ``oracle" \ac{ml} estimator.
                    However, this estimator uses the true \ac{pdf} $p(\xvec; \vartheta_{0})$  through $\Sigmamat$ and thus, cannot be considered as a valid estimator.
                    This exemplifies that the naive \ac{mcrb} and the naive misspecified unbiasedness condition characterize an oracle scenario rather than the misspecified problem of interest and fail to properly capture the effect of model misspecification.
                \end{mybox} 
            \end{figure}    
            In this case, the derivative of the pseudo-true parameter is
            \beqna
            \label{derivative_equal_1}
                \nabla_{\varthetavecsmall} \, \thetavec_{*}(\varthetavecmicro) = \Imat_{N_{1}}, ~~ \forall \varthetavec \in \Omega_{\varthetavecsmall}.
            \eeqna
            According to \eqref{eq:der_PT} from Appendix \ref{App:der_PT} (see also Equation (3.7) in \cite{vuong1986cramer}), \eqref{derivative_equal_1} implies that the term in \eqref{eq:def_B_p_f} coincides with the misspecified information matrix  in \eqref{eq:def_A_theta}, i.e. 
            \beqna
            \label{A_equal_B}
                \Bmat_{p;f} \big( \thetavecmicro_{*}(\varthetavecmicro),\varthetavecmicro \big) = \Amat \big( \thetavecmicro_{*}(\varthetavecmicro) \big), ~~ \forall \varthetavecmicro \in \Omega_{\varthetavecsmall}.
            \eeqna           
            Substituting \eqref{A_equal_B} into \eqref{nMCRB_def} shows that, in this case,  the naive bound from Theorem \ref{mcrb_theorem_5_terms} reduces to the conventional (oracle) \ac{crb} \ac{wrt} $p(\xvec;\varthetavecmicro_{0})$, i.e. 
            \beqna
            \label{eq:conventional_crb}
                {\text{\textbf{MSE}}} \big(\hat{\thetavecmicro}(\xvec),\varthetavecmicro_{*}^{0} \big) \succeq \Jmat_{p}^{-1}(\varthetavecmicro_{0}) = {\text{\textbf{CRB}}}(\varthetavecmicro_{0}),
            \eeqna
            where ${\text{\textbf{CRB}}}(\varthetavecmicro_{0})$ is the \ac{crb} using the true model.
            Since $\thetavec_{*} = \varthetavec$, the set of locally misspecified estimators defined in Definition \ref{local_m_unbiasedness1} is equal to the set of locally unbiased estimators in the classical sense. Therefore, the \ac{crb} is a valid lower bound.
            However, the \ac{crb} remains an ``oracle'' bound, which is not informative, as it does not account for model misspecification.
            As such, it characterizes the asymptotic performance of the \ac{ml} estimator under the true model, which is impractical, and therefore does not accurately characterize the asymptotic performance of the \ac{mml} estimator, which is the classical estimator used under model misspecification \cite{white1982maximum}.
        
        \vspace{-0.05cm}
        \subsubsection{Unattainability of the naive MCRB}  
            A key limitation of the naive \ac{mcrb} is that under a general model of misspecification,  realistic estimators should be based on the assumed \ac{pdf}, $f(\xvec;\thetavecmicro)$, lacking access to $p(\xvec;\varthetavecmicro)$. However, as can be seen in \eqref{eq:MVU_bCRB}, the estimator that achieves the naive \ac{mcrb} in \eqref{eq:MSE_inq_MCRB2} is inevitably a function of the true \ac{pdf}, $p(\xvec;\varthetavecmicro)$, which is unavailable in misspecified scenarios. 
            As a result, the lower bound in \eqref{nMCRB_def} 
            is too optimistic for general mismatch scenarios.

\vspace{-0.25cm}
\section{New Derivation of the MCRB}  
\label{sec:equivalent_models}
\vspace{-0.05cm}
    In Subsection \ref{limitation_Th2_subsection}, it was demonstrated that a misspecified bound based solely on local misspecified unbiasedness leads to an overly optimistic and generally unattainable bound, named here the naive \ac{mcrb}, since the unbiasedness condition characterizes a set of estimators that is unrealistic in a misspecified setting. While knowledge of the true \ac{pdf} $p(\xvec;\varthetavecmicro_{0})$ is necessary for performance analysis, the unbiasedness conditions placed on the estimator must not imply a dependence on the true \ac{pdf}. This balance will lead to a proper definition of local unbiasedness, enabling the derivation of a tighter and appropriate misspecified Cram$\acute{\text{e}}$r-Rao type bound.
    
    In this section, we introduce a new derivation of the \ac{mcrb} that aims to achieve this balance.
    The key idea is to embed the true \ac{pdf} $p(\xvec;\varthetavecmicro_{0})$ into a broader family of auxiliary \acp{pdf} that preserve only its pointwise properties.
    This approach limits the information that can be used by the bound and by the associated unbiasedness conditions.
    We derive the naive \ac{mcrb} from Subsection \ref{sub:Gen_Models} for the different auxiliary \acp{pdf}. By maximizing over this family, we identify the pointwise equivalent \ac{pdf} that yields the tightest \ac{mse} lower bound. We then show that the resulting bound coincides with the classical \ac{mcrb} from Definition \ref{mcrb_theorem}, thereby justifying its use.

    The steps for the proposed \ac{mcrb} derivation are as follows:
    \begin{enumerate}
        \item Introduce pointwise equivalent models by constructing an embedded family of \acp{pdf} that are pointwise equivalent to $p(\xvec;\varthetavecmicro_0)$ (Subsection \ref{sub:Model_trans}).
        
        \item Present the naive \ac{mcrb} for a generic pointwise equivalent model (Subsection \ref{mcrb_general_derivation}).
        
        \item Maximize the bound over the family from Step 1) to obtain the tightest bound, and show that it coincides with the \ac{mcrb} (Subsection \ref{sub:opt_equi}).

    \end{enumerate}

    \vspace{-0.25cm}
    \subsection{Pointwise Equivalent Models}
    \label{sub:Model_trans}
        In this subsection, we formalize the concept of pointwise equivalent models for a general estimation problem under misspecification, as described in Subsection \ref{model_subsec}. 
            \vspace{-0.1cm}
        \begin{definition}[Pointwise equivalent PDFs]
        \label{def:point_wise_equ}
            A \ac{pdf}, $\tilde{p}(\xvec; \gammavecmicro)$ with $\gammavec \in \Omega_{\gammavecsmall}\subseteq \mathbb{R}^{N_{3}}$ is a pointwise equivalent \ac{pdf} of the true \ac{pdf} $p(\xvec; \varthetavecmicro)$ at $\gammavec = \gammavec_{0}$ if
            \beqna
            \label{eq:point_wise_equ}
                \tilde{p}(\xvec; \gammavecmicro_{0}) = p(\xvec ;\varthetavecmicro_{0}),
            \eeqna
            where $\varthetavec_0$ is the given true parameter and the equality is in the almost-sure sense \ac{wrt} the true distribution.
        \end{definition}
               \vspace{-0.1cm}
        In addition, we assume that the pointwise equivalent \acp{pdf}, $\Tilde{p}(\xvec; \gammavecmicro)$, satisfy Conditions \ref{cond1}, \ref{cond2}, and \ref{cond4}, where $p(\xvec;\varthetavecmicro)$  is replaced with $ \tilde{p}(\xvec; \gammavecmicro)$ in those conditions. In particular, according to Condition \ref{cond2}, we require that the pointwise equivalent log-likelihood, $\log \tilde{p}(\xvec; \gammavecmicro)$, is twice differentiable \ac{wrt} $\gammavec$. However, it should be noted that the derivatives of the log-likelihood functions of the true \ac{pdf} and its pointwise equivalents, 
        $\nabla_{\varthetavecsmall} \log p(\xvec; \varthetavecmicro)$ and $\nabla_{\gammavecsmall} \log \tilde{p}(\xvec; \gammavecmicro)$
        may not be equal even at the specific values of $\varthetavecmicro = \varthetavecmicro_{0}$ and $\gammavec = \gammavec_{0}$.  Hence, $\tilde{p}(\xvec;\gammavecmicro)$ is only required to coincide with $p(\xvec;\varthetavecmicro)$ in a pointwise manner at one specific parameter pair, while it may generally differ at other parameter values.
        Now, a pointwise equivalent misspecified parameter estimation model is the model described in Subsection \ref{model_subsec}, where $p(\xvec;\varthetavecmicro)$ is replaced with a pointwise equivalent \ac{pdf}, $\tilde{p}(\xvec;\gammavecmicro)$. 
        According to \eqref{eq:pseudo_def_gen}, the pseudo-true parameter under this equivalent model, denoted by $\tilde{\thetavec}_{*}(\gammavecmicro)$, is  
        \beqna
        \label{eq:pseudo_def_gen_equivalent}
            \tilde{\thetavec}_{*}(\gammavecmicro) = \arg\max_{\thetavecsmall \in \Omega_\thetavectiny} \mathbb{E}_{\tilde{p}; \gammavecsmall} [ \log f(\xvec;\thetavecmicro)]; 
        \eeqna
        The following proposition states that the equivalent-model pseudo-true parameter, $\tilde{\thetavec}_{*}(\gammavecmicro)$, is equal to the original pseudo-true parameter computed with the true \ac{pdf}, $\thetavecmicro_{*}^{0}$, at $\gammavec = \gammavec_{0}$. 
        \begin{proposition}[Pseudo-true parameter invariance]
            Let $\tilde{p}(\xvec; \gammavecmicro)$ be a pointwise equivalent \ac{pdf} of the true \ac{pdf} according to Definition \ref{def:point_wise_equ}. 
            Then, the pseudo-true parameters under the two models are equal at $\gammavec = \gammavec_0$, i.e. 
            \beqna 
            \label{pseudo1_pseudo2}
                \thetavec_{*}^{0} = \tilde{\thetavec}_{*}(\gammavecmicro_0).
            \eeqna
        \end{proposition}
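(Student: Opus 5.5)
The plan is to compare the two objective functions in the definitions of the pseudo-true parameters directly; equal objectives give equal maximizers. By \eqref{eq:pseudo_def_gen_equivalent} evaluated at $\gammavec=\gammavec_0$, $\tilde{\thetavec}_{*}(\gammavecmicro_0)$ maximizes over $\thetavec \in \Omega_{\thetavecsmall}$ the function
\[
\tilde{g}(\thetavecmicro) \define \mathbb{E}_{\tilde{p};\gammavecsmall_0}[\log f(\xvec;\thetavecmicro)] = \int_{\Omega_{\xvec}} \log f(\alphavecmicro;\thetavecmicro)\, \tilde{p}(\alphavecmicro;\gammavecmicro_0) \ud \alphavecmicro .
\]
By \eqref{eq:pseudo_def_gen} with $\varthetavec=\varthetavec_0$, $\thetavec_{*}^{0}$ maximizes
\[
g(\thetavecmicro) \define \mathbb{E}_{p}[\log f(\xvec;\thetavecmicro)] = \int_{\Omega_{\xvec}} \log f(\alphavecmicro;\thetavecmicro)\, p(\alphavecmicro;\varthetavecmicro_0) \ud \alphavecmicro .
\]
It therefore suffices to show that $\tilde{g}(\thetavecmicro)=g(\thetavecmicro)$ for every $\thetavec\in\Omega_{\thetavecsmall}$, with both well defined, and then to invoke uniqueness of the maximizer.

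The first step is the identity of the objectives. By Definition \ref{def:point_wise_equ}, $\tilde{p}(\xvec;\gammavecmicro_0)=p(\xvec;\varthetavecmicro_0)$ almost surely with respect to the true distribution. For fixed $\thetavec$, the integrands $\log f(\alphavecmicro;\thetavecmicro)\tilde{p}(\alphavecmicro;\gammavecmicro_0)$ and $\log f(\alphavecmicro;\thetavecmicro)p(\alphavecmicro;\varthetavecmicro_0)$ then differ only on a null set. This uses that $\log f$ is finite, since $f>0$ on the support of $p$. Consequently the two integrals coincide whenever either one exists. This also covers existence: finiteness of the true-model expectation (implicit in Definition \ref{def:psuedo-true} and Condition \ref{cond2}) transfers to the equivalent model. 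In fact both expectations are integrals against one and the same probability measure, so no absolute-continuity subtlety arises.

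The second step is to conclude equality of the maximizers. Since $\tilde{g}\equiv g$ on $\Omega_{\thetavecsmall}$, the two maximization problems are the same problem, and the argmax sets coincide. Condition \ref{cond1} (uniqueness of the pseudo-true parameter) holds for the true model, and by the assumption stated after Definition \ref{def:point_wise_equ} it also holds for $\tilde{p}$. Each argmax is therefore the singleton $\{\thetavec_{*}^{0}\}$, which gives \eqref{pseudo1_pseudo2}.

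The main obstacle is minor: being careful that "almost-sure equality" is taken with respect to the right measure. Equality $p(\cdot;\varthetavecmicro_0)$-a.s. says nothing about the region where $p(\cdot;\varthetavecmicro_0)=0$ but $\tilde{p}(\cdot;\gammavecmicro_0)>0$. Since both are densities integrating to one and $\tilde{p}$ already integrates to one over the support of $p$, the set where $\tilde{p}(\cdot;\gammavecmicro_0)>0$ outside that support has $\tilde{p}$-mass zero. So the expectations agree, and I would note this remark explicitly in the proof.
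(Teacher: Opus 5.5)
Your proposal is correct and follows the same route as the paper: substitute the pointwise equivalence $\tilde{p}(\xvec;\gammavecmicro_0)=p(\xvec;\varthetavecmicro_0)$ into the objective of \eqref{eq:pseudo_def_gen_equivalent}, which turns it into the objective of \eqref{eq:pseudo_def_gen} at $\varthetavec_0$, so the maximizers coincide. Your additional argument that $p$-a.s.\ equality together with unit mass forces Lebesgue-a.e.\ equality is a valid refinement the paper leaves implicit; you also do not need to assume uniqueness for $\tilde{p}$ separately, since it follows from the two objectives being identical.
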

        
        \begin{IEEEproof}
            Evaluating \eqref{eq:pseudo_def_gen_equivalent} at $\gammavec=\gammavec_0$, we obtain 
            \beqna
            \label{eq:pseudo_tilde_def}
                \tilde{\thetavec}_{*}(\gammavecmicro_0) 
                = 
                \arg\max_{\thetavecsmall \in \Omega_{\thetavectiny}} \int_{\Omega_{\xvec}} \log f(\alphavecmicro;\thetavecmicro) \, p(\alphavecmicro;\varthetavecmicro_{0}) \ud \alphavecmicro = \thetavec_{*}^{0},
            \eeqna
            where the first and second equalities follow from the pointwise equivalence in \eqref{eq:point_wise_equ} and the definition of the pseudo-true parameter in \eqref{eq:pseudo_def_gen} for $\varthetavec = \varthetavec_{0}$, respectively.
        \end{IEEEproof}

        The expectation of any measurable function $h(\xvec)$ \ac{wrt} the true \ac{pdf}, $p(\xvec; \varthetavecmicro_{0})$, is the same as the expectation of $h(\xvec)$ \ac{wrt} any pointwise equivalent \ac{pdf} at $\gammavec = \gammavec_0$, $\tilde{p}(\xvec; \gammavecmicro_{0})$, which is denoted by ${\mathbb{E}}_{\tilde{p}}[h(\xvec)] = \mathbb{E}_{\tilde{p}; \gammavecsmall_{0}}[h(\xvec)]$.
        In particular, the \ac{mse} matrix of a misspecified estimator using $p(\xvec; \varthetavecmicro_{0})$ is identical to that obtained under $\tilde{p}(\xvec; \gammavecmicro_{0})$, i.e. 
        \beqna
        \label{MSE_equivalent}
            \mathbb{E}_{\tilde{p}} \big[ (\hat{\thetavecmicro}(\xvec) -  \tilde{\thetavecmicro}_{*}(\gammavecmicro_0) ) (\hat{\thetavecmicro}(\xvec) -  \tilde{\thetavecmicro}_{*}(\gammavecmicro_0) )^{T} \big]  \hspace{2cm} \nonumber \\ 
            = \int_{\Omega_{\xvec}} \!\!\! \big(\hat{\thetavecmicro}(\alphavecmicro) - \thetavecmicro_{*}^{0} \big) \big(\hat{\thetavecmicro}(\alphavecmicro) - \thetavecmicro_{*}^{0} \big)^{T} p(\alphavecmicro;\varthetavecmicro_{0}) \ud \alphavecmicro \nonumber \\ = {\text{\textbf{MSE}}} \big(\hat{\thetavecmicro}(\xvec),\thetavecmicro_{*}^{0} \big),\hspace{3.75cm}
        \eeqna
        where the first equality is obtained substituting \eqref{eq:point_wise_equ} and \eqref{pseudo1_pseudo2}.
        
        It should be noted that while Vuong also introduces an auxiliary class of distributions related to the true \ac{pdf} to derive the \ac{mcrb} (see \cite{Richmond_Fusion_2018,vuong1986cramer}), his construction is more restrictive. Specifically, the class is explicitly designed so that the pseudo-true parameter coincides with that of the original model, and the auxiliary score function is proportional to the assumed one in a neighborhood of the pseudo-true parameter. In contrast, our approach imposes only pointwise equivalence constraints, 
        where the pseudo-true parameter equivalence is obtained here as a byproduct.
        In addition, proportionality of the score functions is not assumed but instead emerges naturally as the optimizer over our class. Thus, in our framework, Vuong’s construction appears as a special case of a broader class, rather than an imposed assumption.

        \vspace{-0.2cm}
    \subsection{Naive Misspecified CRB for Pointwise Equivalent Models}
    \label{mcrb_general_derivation}
        Based on Subsection \ref{sub:Model_trans}, we can conclude that using $\tilde{p}(\xvec;\gammavecmicro_0)$ instead of $p(\xvec;\varthetavecmicro_0)$ defines a misspecified estimation model whose pseudo-true parameter is still $\thetavec_{*}^0$. Furthermore, since these \acp{pdf} coincide pointwisely and the pseudo-true parameters are equal, the bias and the \ac{mse} under $\tilde{p}(\xvec;\gammavecmicro_0)$ are identical to those under $p(\xvec; \varthetavecmicro_{0})$. Hence, any \ac{mse} bound derived under $\tilde{p}(\xvec;\gammavecmicro)$ can be used for the original model. 
        
        In this subsection, we investigate the use of the naive \ac{mcrb} for the pointwise equivalent models, i.e. using $\tilde{p}(\xvec; \gammavecmicro)$ as the ``true” \ac{pdf} \big(instead of $p(\xvec;\varthetavecmicro)$\big), while all other elements of the settings \big(e.g. the assumed \ac{pdf} $f(\xvec;\thetavecmicro)$\big) remain as described in Subsection \ref{model_subsec}. To this end, we first define the locally misspecified unbiasedness condition following Definition \ref{local_m_unbiasedness1}, under the pointwise equivalent \ac{pdf} $\tilde{p}(\xvec;\gammavecmicro)$. 
        \begin{definition}[Equivalent naive local misspecified unbiasedness]
        \label{def:missp_unbias}
             Let the assumed \ac{pdf} $f(\xvec;\thetavecmicro)$ satisfy Condition \ref{cond1}. An estimator $\hat{\thetavec}(\xvec)$ is a naive locally misspecified unbiased estimator of $\thetavec_{*}^{0}$ \ac{wrt} the pointwise equivalent \ac{pdf} $\tilde{p}(\xvec; \gammavecmicro)$ if 
             \vspace{-0.25cm}
            \begin{subequations}
                \beqna
                    \mathbb{E}_{\tilde{p}} \big[\hat{\thetavec}(\xvec) - \tilde{\thetavec}_{*}(\gammavecmicro_0) \big] \hspace{0.7cm} &=& \zerovec , \hspace{1.4cm}\label{pointwise_m_unbiasedness_1_tilde}
                \\
                    \nabla_{\gammavecsmall} \mathbb{E}_{\tilde{p}; \gammavecsmall} [\hat{\thetavec}(\xvec) - \tilde{\thetavec}_{*}(\gammavecmicro) ]\big|_{\gammavecsmall = \gammavecsmall_{0}} &=& \zerovec . \label{eq:local_miss_bias}
                \eeqna
            \end{subequations} 
        \end{definition}
        
        It can be seen that the condition in \eqref{pointwise_m_unbiasedness_1_tilde} is identical to the original unbiasedness condition in \eqref{pointwise_m_unbiasedness_1}, since the pointwise expectation \ac{wrt} $p(\xvec;\varthetavecmicro)$ and \ac{wrt} $\tilde{p}(\xvec;\gammavecmicro)$ are identical, and since according to \eqref{pseudo1_pseudo2} the associated pseudo-true parameters satisfy $\tilde{\thetavec}_{*}(\gammavecmicro_0)=\thetavec_{*}^{0}$. 
        However, the second condition in \eqref{eq:local_miss_bias} is different from \eqref{local_m_unbiasedness_1}. Similar to Claim \ref{claim_der_pt}, it can be shown that under Conditions \ref{cond1}-\ref{cond4},  \eqref{eq:local_miss_bias} can be reformulated as
        \beqna
        \label{17_tmp_tilde}
            {\mathbb{E}}_{\tilde{p}} [ (\hat{\thetavec}(\xvec)- \thetavec_{*}^{0} ) \nabla_{\gammavecsmall} \log \tilde{p}(\xvec;\gammavecmicro_{0}) ]
            \hspace{-0.07cm}=\hspace{-0.07cm} {\Amat}^{-1} ( \thetavecmicro_{*}^{0} ) \Bmat_{\tilde{p};f}( \thetavecmicro_{*}^{0},\gammavecmicro_{0} ),
        \eeqna  
        where $\Amat(\thetavecmicro) $ is defined in \eqref{eq:def_A_theta} and $\Bmat_{\tilde{p};f}(\thetavecmicro,\varthetavecmicro)$ is obtained by substituting $\tilde{p}(\xvec;\gammavecmicro)$ in \eqref{eq:def_B_p_f}.
        This result can be derived similarly to the derivation in Appendix \ref{App:der_PT} by substituting $\nabla_{\varthetavecsmall} \log p(\xvec; \varthetavecmicro_{0})$ with $\nabla_{\gammavecsmall} \log \tilde{p}(\xvec; \gammavecmicro_{0})$.
        The condition in \eqref{17_tmp_tilde} implies that local misspecified unbiasedness under $\tilde{p}(\xvec;\gammavecmicro)$ can be written based on the correlation between the estimation error, $\hat{\thetavecmicro}(\xvec) - \thetavecmicro_{*}^{0}$, and the score function $\nabla_{\gammavecsmall} \log \tilde{p}(\xvec;\gammavecmicro_{0})$, rather than the true score, $\nabla_{\varthetavecsmall} \log p(\xvec; \varthetavecmicro_{0})$ as in \eqref{17_tmp}.
        It is noted that, due to the pointwise equivalence in \eqref{eq:point_wise_equ}, all the expectations in \eqref{pointwise_m_unbiasedness_1_tilde}, \eqref{eq:local_miss_bias}, and \eqref{17_tmp_tilde} can be computed \ac{wrt} $p(\xvec;\varthetavecmicro_{0})$.

        The following proposition presents the naive \ac{mcrb} under a general pointwise equivalent model, $\tilde{p}(\xvec;\gammavecmicro)$.
        \begin{proposition}[Naive MCRB for pointwise equivalent models]
        \label{claim2_tmp}
            Let $\tilde{p}(\xvec;\gammavecmicro)$ be a pointwise equivalent \ac{pdf} (Definition \ref{def:point_wise_equ}) satisfying Conditions \ref{cond1}-\ref{cond4} together with the assumed \ac{pdf} $f(\xvec;\thetavecmicro)$. Consider an estimator $\hat{\thetavec}(\xvec)$ that is naive locally misspecified unbiased \ac{wrt} $\tilde{p}(\xvec;\gammavecmicro)$ (Definition \ref{def:missp_unbias}). Then,
            \beqna
            \label{eq:bias_CRB_p_tilde}
                {\text{\textbf{MSE}}} \big(\hat{\thetavecmicro}(\xvec),\thetavecmicro_{*}^{0} \big) \succeq {\text{\textbf{nMCRB}}}_{\tilde{p}}(\thetavecmicro_{*}^{0},\gammavecmicro_{0}) ,
            \eeqna
            where $ {\text{\textbf{nMCRB}}}_{\tilde{p}}(\cdot) $ is obtained by substituting $p=\tilde{p}$ and $\varthetavecmicro_{0}=\gammavecmicro_{0}$ in \eqref{nMCRB_def}.
            Similarly, equality in \eqref{eq:bias_CRB_p_tilde} is attained  {\em{iff}} \eqref{eq:MVU_bCRB} holds a.s.  after substituting $p=\tilde{p}$ and $\varthetavecmicro_{0}=\gammavecmicro_{0}$ in \eqref{eq:MVU_bCRB}. 
        \end{proposition}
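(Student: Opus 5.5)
The plan is to obtain Proposition \ref{claim2_tmp} by applying Theorem \ref{mcrb_theorem_5_terms} verbatim to the pointwise equivalent misspecified estimation problem. In that problem $\tilde{p}(\xvec;\gammavecmicro)$ plays the role of the true \ac{pdf}, $\gammavec$ plays the role of $\varthetavec$, and $\gammavec_0$ plays the role of $\varthetavec_0$. The first step is to check the hypotheses of Theorem \ref{mcrb_theorem_5_terms} for this substituted model.

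Conditions \ref{cond1}-\ref{cond4} hold for the pair $(\tilde{p},f)$ by assumption. The only quantity that changes in Condition \ref{cond3} is the matrix $\Amat(\thetavecmicro_*^0)$. By the pointwise equivalence \eqref{eq:point_wise_equ} and the pseudo-true invariance \eqref{pseudo1_pseudo2}, we have $-\mathbb{E}_{\tilde{p}}[\nabla^2_{\thetavecsmall}\log f(\xvec;\tilde{\thetavec}_*(\gammavecmicro_0))]=\Amat(\thetavecmicro_*^0)$, so the matrix is the same and is still non-singular. The estimator is assumed to be naive locally misspecified unbiased \ac{wrt} $\tilde{p}$ in the sense of Definition \ref{def:missp_unbias}. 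This definition is exactly Definition \ref{local_m_unbiasedness1} after the substitution $p\to\tilde{p}$, $\varthetavec\to\gammavec$. By the analogue of Claim \ref{claim_der_pt}, it yields the correlation constraint \eqref{17_tmp_tilde}.

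Invoking Theorem \ref{mcrb_theorem_5_terms} for the substituted model then gives
$\mathbb{E}_{\tilde{p}}[(\hat{\thetavec}-\tilde{\thetavec}_*(\gammavecmicro_0))(\hat{\thetavec}-\tilde{\thetavec}_*(\gammavecmicro_0))^T]\succeq \text{\textbf{nMCRB}}_{\tilde{p}}(\thetavecmicro_*^0,\gammavecmicro_0)$.
The equality condition is \eqref{eq:MVU_bCRB} with $p=\tilde{p}$ and $\varthetavecmicro_0=\gammavecmicro_0$. The final step is to convert the left-hand side into the original \ac{mse} using \eqref{MSE_equivalent}: the expectation under $\tilde{p}(\cdot;\gammavecmicro_0)$ equals the expectation under $p(\cdot;\varthetavecmicro_0)$, and $\tilde{\thetavec}_*(\gammavecmicro_0)=\thetavec_*^0$. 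This gives \eqref{eq:bias_CRB_p_tilde}.

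The equality statement holds almost surely with respect to $p(\xvec;\varthetavecmicro_0)$. This is because the two measures agree a.s. at $\gammavec_0$, so "a.s. under $\tilde{p}(\cdot;\gammavecmicro_0)$" and "a.s. under $p(\cdot;\varthetavecmicro_0)$" mean the same thing.

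The main obstacle is making sure nothing in the proof of Theorem \ref{mcrb_theorem_5_terms} uses more than pointwise information about the true model. That proof relies on derivatives of the bias with respect to the parameter, and these involve $\nabla_{\gammavecsmall}\tilde{p}$, which may differ from $\nabla_{\varthetavecsmall}p$. This is harmless, because the proof only needs the interchange in Condition \ref{cond4}, which holds for $\tilde{p}$ by assumption.

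It also needs the identity for $\nabla_{\gammavecsmall}\tilde{\thetavec}_*(\gammavecmicro_0)$ obtained by implicit differentiation of the first-order condition $\mathbb{E}_{\tilde{p};\gammavecsmall}[\nabla_{\thetavecsmall}\log f]=\zerovec$. That gives $\nabla_{\gammavecsmall}\tilde{\thetavec}_*(\gammavecmicro_0)=\Amat^{-1}(\thetavecmicro_*^0)\Bmat_{\tilde{p};f}(\thetavecmicro_*^0,\gammavecmicro_0)$, where the Hessian term is evaluated at $\gammavec_0$ and therefore equals $\Amat$. Finally, I would note that $\Jmat_{\tilde{p}}(\gammavecmicro_0)$ must be invertible for the bound to be defined, and state this as implicitly assumed, as it is in Theorem \ref{mcrb_theorem_5_terms}.
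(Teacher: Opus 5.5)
Your proposal is correct and follows the same route as the paper, which simply reruns the proof of Theorem~\ref{mcrb_theorem_5_terms} with $\tilde{p}(\xvec;\gammavecmicro)$ in place of $p(\xvec;\varthetavecmicro)$ and uses pointwise equivalence to evaluate all expectations under $p(\xvec;\varthetavecmicro_0)$. Your extra checks make explicit what the paper leaves tacit: that $\Amat(\thetavecmicro_*^0)$ is unchanged, the implicit-differentiation formula for $\nabla_{\gammavecsmall}\tilde{\thetavec}_*(\gammavecmicro_0)$, and the implicitly assumed invertibility of $\Jmat_{\tilde{p}}(\gammavecmicro_0)$.
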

        \vspace{-0.1cm}
        \begin{IEEEproof}
            The proof is along the lines of the proof of Theorem \ref{mcrb_theorem_5_terms}, where we replace $p(\xvec;\varthetavecmicro)$ with $\tilde{p}(\xvec;\gammavecmicro)$.
            Due to the pointwise equivalence from \eqref{def:point_wise_equ}, all the expectation operators can be computed \ac{wrt} $p(\xvec;\varthetavecmicro)$.
        \end{IEEEproof}

        The inequality in \eqref{eq:bias_CRB_p_tilde}  holds for any pointwise equivalent \ac{pdf} $\tilde{p}(\xvec;\gammavecmicro)$. 
        Thus, it provides a class of lower bounds on the \ac{mse}.
        Since $p(\xvec;\varthetavecmicro)$ and $\tilde{p}(\xvec;\gammavecmicro)$ coincide at the operating point, each bound in this class also applies to the original estimation problem described in Subsection~\ref{model_subsec}, i.e. they bound the \ac{mse} from \eqref{eq:miss_MSE}. Different choices of $\tilde{p}(\xvec;\gammavecmicro)$ generally yield different bounds. 
        In particular, the trivial choice of $\tilde{p}(\xvec;\gammavecmicro) = p(\xvec;\varthetavecmicro)$ results in 
        the naive \ac{mcrb} from Theorem \ref{mcrb_theorem_5_terms}, with its disadvantages described in Subsection \ref{limitation_Th2_subsection}. Other choices of $\tilde{p}(\xvec;\gammavecmicro)$ may lead to tighter bounds. 
        In the next subsection, we identify the tightest bound within this class.

\vspace{-0.25cm}
    \subsection{Tightest Bound in Pointwise Equivalent Models}
    \label{sub:opt_equi}
        In this subsection, we derive the tightest naive \ac{mcrb} that is obtained among all pointwise equivalent \acp{pdf} and show that it coincides with the classical \ac{mcrb}. Then, we provide a sufficient condition on $\tilde{p}(\xvec;\gammavecmicro)$ for achieving it.
        \vspace{-0.1cm}
        \begin{proposition}[Tightest naive MCRB]
        \label{theorem_tightest}
            Let  ${\mathcal{P}}_{\mathrm{pe}}$ denote the class of pointwise equivalent \acp{pdf} (Definition \ref{def:point_wise_equ}) satisfying Conditions \ref{cond1}-\ref{cond4} with the assumed \ac{pdf} $f(\xvec;\thetavecmicro)$. Then, the tightest naive \ac{mcrb} within this class is the \ac{mcrb} in \eqref{eq:def_MCRB}, i.e. 
            \beqna
            \label{eq:MCRB_sup}
                \text{\textbf{MCRB}}(\thetavecmicro_*^{0}) = \max_{\tilde{p}\in{\mathcal{P}}_{\mathrm{pe}}} \text{\textbf{nMCRB}}_{\tilde{p}}(\thetavecmicro_*^{0}, \gammavecmicro_{0}),
            \eeqna
            where the maximization is \ac{wrt} the Loewner order. Moreover, the maximum in \eqref{eq:MCRB_sup} is attained \eqref{nMCRB_def} iff  $p=\tilde{p}$, $\varthetavecmicro_{0}=\gammavecmicro_{0}$, and 
            \beqna
            \label{eq:lin_p_tilde}
                \Wmat \, {\nabla_{\gammavecsmall}}^{\!\!\! T} \log \tilde{p}(\xvec;\gammavecmicro_{0}) = {\nabla_{\thetavecsmall}}^{\!\!\! T} \log f(\xvec; \thetavecmicro_{*}^{0}),  ~~~ \text{a.s.},
            \eeqna
            where $\Wmat \in \mathbb{R}^{N_{2} \times N_{3}}$ is independent of $\xvec$.

        \end{proposition}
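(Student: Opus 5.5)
Let me write $\mathbf{s}(\xvec)\define\nabla_{\thetavecsmall}^{T}\log f(\xvec;\thetavec_*^0)\in\mathbb{R}^{N_2}$ for the assumed score at the pseudo-true point and $\tilde{\mathbf{u}}(\xvec)\define\nabla_{\gammavecsmall}^{T}\log\tilde p(\xvec;\gammavec_0)\in\mathbb{R}^{N_3}$ for the score of a generic $\tilde p\in\mathcal{P}_{\mathrm{pe}}$. By the pointwise equivalence \eqref{eq:point_wise_equ}, every expectation below can be taken under $p(\xvec;\varthetavec_0)$. The proof has two parts: a Cauchy--Schwarz upper bound showing that no member of $\mathcal{P}_{\mathrm{pe}}$ gives a naive bound above the \ac{mcrb}, and an explicit construction showing that the \ac{mcrb} is actually reached.

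\emph{Step 1 (rewriting the naive bound).} With the above notation, $\Bmat_{\tilde p;f}(\thetavec_*^0,\gammavec_0)=\mathbb{E}_p[\mathbf{s}\tilde{\mathbf{u}}^T]$ and $\Jmat_{\tilde p}(\gammavec_0)=\mathbb{E}_p[\tilde{\mathbf{u}}\tilde{\mathbf{u}}^T]$. Therefore
\[
\text{\textbf{nMCRB}}_{\tilde p}=\Amat^{-1}\,\mathbb{E}[\mathbf{s}\tilde{\mathbf{u}}^T]\,\mathbb{E}[\tilde{\mathbf{u}}\tilde{\mathbf{u}}^T]^{-1}\,\mathbb{E}[\tilde{\mathbf{u}}\mathbf{s}^T]\,\Amat^{-1},
\]
where $\Amat=\Amat(\thetavec_*^0)$. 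The middle factor is the covariance of the $L_2$-projection of $\mathbf{s}$ onto the span of the components of $\tilde{\mathbf{u}}$. Note that $\mathbb{E}_p[\tilde{\mathbf{u}}]=\zerovec$ by Condition \ref{cond4}. The matrix covariance (Cauchy--Schwarz) inequality, equivalently positive semidefiniteness of the joint second-moment matrix of $(\mathbf{s},\tilde{\mathbf{u}})$ followed by a Schur complement, gives
\[
\mathbb{E}[\mathbf{s}\tilde{\mathbf{u}}^T]\,\Jmat_{\tilde p}^{-1}\,\mathbb{E}[\tilde{\mathbf{u}}\mathbf{s}^T]\preceq\mathbb{E}[\mathbf{s}\mathbf{s}^T]=\Bmat(\thetavec_*^0).
\]
Multiplying on both sides by $\Amat^{-1}$ yields $\text{\textbf{nMCRB}}_{\tilde p}\preceq\text{\textbf{MCRB}}(\thetavec_*^0)$ for every $\tilde p\in\mathcal{P}_{\mathrm{pe}}$. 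The same argument already appears in the proof of Claim \ref{cl:MCRB_vs_bCRB} (Appendix \ref{App:CS-MCRB-bCRB}), with $p$ replaced by $\tilde p$. Equality in the Schur-complement step holds iff the projection residual vanishes, i.e., iff $\mathbf{s}=\Wmat\tilde{\mathbf{u}}$ a.s., where $\Wmat=\mathbb{E}[\mathbf{s}\tilde{\mathbf{u}}^T]\Jmat_{\tilde p}^{-1}$ is deterministic. This is exactly \eqref{eq:lin_p_tilde}.

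\emph{Step 2 (attainability).} A Loewner upper bound is only a maximum if some member of $\mathcal{P}_{\mathrm{pe}}$ satisfies \eqref{eq:lin_p_tilde}, so I need to construct one. I would follow Vuong's tilted family:
\[
\tilde p(\xvec;\gammavec)=p(\xvec;\varthetavec_0)\,\frac{f(\xvec;\thetavec_*^0+\gammavec-\gammavec_0)}{f(\xvec;\thetavec_*^0)}\,c(\gammavec)^{-1},
\]
with $N_3=N_2$ and $c(\gammavec)$ the normalizing constant. This gives $\tilde p(\xvec;\gammavec_0)=p(\xvec;\varthetavec_0)$. Differentiating at $\gammavec_0$ gives $\tilde{\mathbf{u}}=\mathbf{s}-\nabla^T\log c(\gammavec_0)$. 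The constant term vanishes, because $\nabla c(\gammavec_0)=\mathbb{E}_p[\mathbf{s}]=\zerovec$ by the first-order optimality of the pseudo-true parameter in \eqref{eq:pseudo_def_gen} combined with Condition \ref{cond4}. Hence \eqref{eq:lin_p_tilde} holds with $\Wmat=\Imat$, so the upper bound is attained and \eqref{eq:MCRB_sup} holds with equality.

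\emph{Main obstacle.} I expect the membership check for the tilted family to be the hard part: showing that it lies in $\mathcal{P}_{\mathrm{pe}}$, i.e., satisfies Conditions \ref{cond1}--\ref{cond4}. This needs $c(\gammavec)$ to be finite and differentiable near $\gammavec_0$ and the pseudo-true map $\tilde{\thetavec}_*(\gammavec)$ to be unique. If global integrability fails, I would use a linear exponential tilt $\exp\{(\gammavec-\gammavec_0)^T\mathbf{s}(\xvec)\}$, possibly truncated, since only the score at $\gammavec_0$ matters. Where necessary I would add the assumption that the moment generating function of $\mathbf{s}$ is finite in a neighborhood of $\zerovec$. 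The local regularity needed near $\gammavec_0$ would then follow from the assumptions of the statement.
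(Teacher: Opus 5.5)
Your proof is correct and follows the paper. Step 1 is the argument of Appendix~\ref{App:tightest_nMCRB}, namely \eqref{Cs1}--\eqref{eq_CS} with $\phivec=\nabla_{\thetavecsmall}^{T}\log f(\xvec;\thetavecmicro_*^0)$ and $\etavec=\nabla_{\gammavecsmall}^{T}\log\tilde p(\xvec;\gammavecmicro_0)$, and Step 2 is the paper's Proposition~\ref{claim7} with $g(z)=z$ (Example 1 of Box~\ref{box:example_P_tilde}); the paper proves that attainability separately rather than inside this proof, and it simply assumes (Property~\ref{Prop2}) the regularity of the tilted family that you rightly flag as the remaining obstacle.
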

        
        \begin{IEEEproof}
            The proof appears in Appendix \ref{App:tightest_nMCRB}.
        \end{IEEEproof}

        This proposition implies that the \ac{mcrb} is a valid lower bound on the \ac{mse} of any estimator $\hat{\thetavec}(\xvec)$ that is naive locally misspecified unbiased \ac{wrt} equivalent models satisfying \eqref{eq:lin_p_tilde}, and that it is the tightest possible to obtain among pointwise equivalent models. A special case where the condition in \eqref{eq:lin_p_tilde} holds is for estimation problems that satisfy $ N_{1}=N_3$ and
        \beqna
        \label{eq:lin_assumed_true_pdf}
            \Wmat \, {\nabla_{\varthetavecsmall}}^{\!\!\! T} \log p(\xvec;\varthetavecmicro_{0}) = {\nabla_{\thetavecsmall}}^{\!\!\! T} \log f(\xvec; \thetavecmicro_{*}^{0}),~~~\forall \xvec \in \Omega_{\xvec}.
        \eeqna
         In this case, choosing $\tilde{p}(\xvec;\gammavecmicro) = p(\xvec;\varthetavecmicro)$,  we obtain immediately that \eqref{eq:lin_p_tilde} holds, and the naive \ac{mcrb} is reduced to the \ac{mcrb}.
        A simple example of such a model is given in Box \ref{box:example_Prop_Mod}.

        A natural question is whether there always exists a $\tilde{p}(\xvec;\gammavecmicro)$ that satisfies both the pointwise equivalence \eqref{eq:point_wise_equ} and the proportional-score condition \eqref{eq:lin_p_tilde}, thereby yielding the \ac{mcrb} as a lower bound. The following proposition provides a general method to construct such a \ac{pdf} for any model.

        \begin{figure}[ht]
            \refstepcounter{myboxcounter} 
            \begin{mybox}[label=box:example_Prop_Mod]{Example for Proportional Models}
                Consider the estimation problem where the true and assumed \acp{pdf} are
                \begin{boxsubequations}
                    \beqna 
                        p(\xvec; \vartheta) \hspace{-0.25cm} &=& \hspace{-0.25cm} \mathcal{N}(\xvec, \vartheta \onevec, \sigma_{1}^2 \Imat_{N}),
                        \\
                        f(\xvec; \theta) \hspace{-0.25cm} &=& \hspace{-0.25cm} \mathcal{N}(\xvec, \theta \onevec, \sigma_{2}^2 \Imat_{N}),
                    \eeqna
                \end{boxsubequations}
                
                \noindent where $\sigma_{1}^{2} \neq \sigma_{2}^{2}$. Substituting this model in \eqref{eq:pseudo_def_gen}, we obtain that  $\theta_{*}(\vartheta) = \vartheta$. Thus, it can be verified that
                \begin{boxequation}
                \label{example_2}
                    \hspace{-0.4cm}\nabla_{\! \vartheta} \log p(\xvec;\vartheta) \hspace{-0.05cm} = \hspace{-0.05cm} \frac{\sigma_{2}^{2}}{\sigma_{1}^{2}} \nabla_{\!\theta} \log f(\xvec; \theta_{*}) \hspace{-0.05cm} = \hspace{-0.05cm} \frac{ (\xvec - \vartheta \onevec)^{T} \onevec
                    }{\sigma_{1}^{2}}.
                \end{boxequation}
                
                \noindent This result implies that \eqref{eq:lin_assumed_true_pdf} is satisfied in this case for $\Wmat=\frac{\sigma_{1}^{2}}{\sigma_{2}^{2}}$. Thus,  in this case, by choosing $\tilde{p}(\xvec;\gamma) = p(\xvec;\vartheta)$,  we obtain that \eqref{eq:lin_p_tilde} holds, and the naive \ac{mcrb} is reduced to the \ac{mcrb}.
                To see that, we substitute this model in \eqref{eq:def_A_theta}, \eqref{eq:def_B_theta}, \eqref{eq:def_B_p_f}, and \eqref{J_def}, to obtain the information terms  
                \begin{boxsubequations}
                    \beqna
                        \label{A_box3}
                        A(\theta_{*})  = {N}/{\sigma_{2}^{2}},~~~ 
                        B(\theta_{*})  = {N \sigma_{1}^{2}}/{\sigma_{2}^{4}},
                        \\
                        \label{J_box3}
                        J_{p}(\vartheta)  =  {N}/{\sigma_{1}^{2}},~~~
                        B_{p;f}(\theta_{*}, \vartheta)  = {N}/{\sigma_{2}^{2}}. 
                    \eeqna
                \end{boxsubequations}
                
                \noindent By substituting \eqref{A_box3} 
                in \eqref{eq:def_MCRB}, we obtain that the \ac{mcrb} is \vspace{-0.3cm}
                \begin{boxequation}
                \label{mcrb_Box3}
                    \text{MCRB}(\theta_{*}^{0}) = {\sigma_{1}^{2}}/{N}.
                \end{boxequation}
                
                \noindent Similarly, using the expressions in \eqref{A_box3} and \eqref{J_box3},
                implies that the naive \ac{mcrb} from \eqref{nMCRB_def} is 
                \begin{boxequation}
                    {\text{nMCRB}}_{p}(\theta_{*}^{0},\vartheta_{0}) = {\sigma_{1}^{2}}/{N},
                \end{boxequation}
                
                \noindent i.e. coincides with \eqref{mcrb_Box3} as expected from Proposition \ref{theorem_tightest}. 
            \end{mybox} 
                        \vspace{-0.25cm}
        \end{figure}
        \begin{proposition}
        \label{claim7}
            Let $\Omega_{\gammavecsmall} = \Omega_{\thetavecsmall}$ and $\gammavecmicro_{0} = \thetavecmicro_{*}^{0}$. Consider the function
            \beqna
            \label{eq:gen_shape_ptilde}
                 \tilde{p}^{(o)}(\xvec;\gammavecmicro) = \frac{1}{c(\gammavecmicro)} g\bigg(\frac{f(\xvec; \gammavecmicro)}{f(\xvec; \thetavecmicro_{*}^{0})} \bigg) p(\xvec; \varthetavecmicro_{0}),~\gammavecmicro \in \Omega_{\thetavecsmall},
            \eeqna
            where 
            \beqna
            \label{c_def}
                c(\gammavecmicro)\define \int_{\Omega_{\xvec}} g\bigg(\frac{f(\alphavecmicro; \gammavecmicro)}{f(\alphavecmicro; \thetavecmicro_{*}^{0})} \bigg) p(\alphavecmicro; \varthetavecmicro_{0}) \ud \alphavecmicro
            \eeqna
            is a normalizing constant, and $g:\mathbb{R}_{+}\to\mathbb{R}_{+}$ is a twice-differentiable bijection  satisfying $g(1)=1$ and $g'(1) \neq 0$.
            Then, $\tilde{p}^{(o)}(\xvec;\gammavecmicro)$ has the following properties:
            {\renewcommand{\theenumi}{P.\arabic{enumi}}
            \begin{enumerate}
                \item\label{Prop1} It is a valid \ac{pdf} for any $\gammavecmicro \in \Omega_{\thetavecsmall}$.
                
                \item\label{Prop2} It is pointwise equivalent to $p(\xvec; \varthetavecmicro_{0})$ at $\gammavecmicro_0=\thetavecmicro_{*}^{0}$, assuming it satisfies  Conditions \ref{cond1}, \ref{cond2}, and \ref{cond4}.
                \item\label{Prop3} It satisfies the proportional-score condition in \eqref{eq:lin_p_tilde} for $\Wmat = \frac{1}{g'(1)} \Imat_{N_{2}}$, and therefore attains the tightest bound, i.e. the \ac{mcrb}, as characterized in Proposition \ref{theorem_tightest}.
            \end{enumerate}}
        \end{proposition}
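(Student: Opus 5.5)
We verify the three properties of $\tilde{p}^{(o)}(\xvec;\gammavecmicro)$ in \eqref{eq:gen_shape_ptilde} in turn, by direct computation from its definition and from Proposition \ref{theorem_tightest}.

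\emph{Validity (\ref{Prop1}).} Since $g$ maps $\mathbb{R}_{+}$ to $\mathbb{R}_{+}$, and $f$ is strictly positive on the support of $p$, the ratio $f(\xvec;\gammavecmicro)/f(\xvec;\thetavecmicro_{*}^{0})$ is in $\mathbb{R}_{+}$. Hence the integrand in \eqref{c_def} is nonnegative and $c(\gammavecmicro)>0$. We additionally assume $c(\gammavecmicro)<\infty$, a mild integrability requirement that is implicit in the regularity conditions. Dividing by $c(\gammavecmicro)$ then gives a nonnegative function integrating to one.

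\emph{Pointwise equivalence (\ref{Prop2}).} At $\gammavecmicro=\thetavecmicro_{*}^{0}$ the ratio equals $1$, so $g(1)=1$. Then $c(\thetavecmicro_{*}^{0})=\int p(\alphavecmicro;\varthetavecmicro_{0})\ud\alphavecmicro=1$, and $\tilde{p}^{(o)}(\xvec;\thetavecmicro_{*}^{0})=p(\xvec;\varthetavecmicro_{0})$, which is \eqref{eq:point_wise_equ}.

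\emph{Proportional score (\ref{Prop3}).} This is the main computation. The score is
\[
\nabla_{\gammavecsmall}\log\tilde{p}^{(o)}(\xvec;\gammavecmicro)=\frac{g'(r(\xvec;\gammavecmicro))}{g(r(\xvec;\gammavecmicro))}\,r(\xvec;\gammavecmicro)\,\nabla_{\gammavecsmall}\log f(\xvec;\gammavecmicro)-\nabla_{\gammavecsmall}\log c(\gammavecmicro),
\]
where $r(\xvec;\gammavecmicro)=f(\xvec;\gammavecmicro)/f(\xvec;\thetavecmicro_{*}^{0})$; here we used $\nabla_{\gammavecsmall} r = r\,\nabla_{\gammavecsmall}\log f$. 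At $\gammavecmicro_0=\thetavecmicro_{*}^{0}$ we have $r=1$ and $g(1)=1$, so the first term reduces to $g'(1)\nabla_{\thetavecsmall}\log f(\xvec;\thetavecmicro_{*}^{0})$.

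For the normalizing term, we interchange differentiation and integration by Condition \ref{cond4}:
\[
\nabla_{\gammavecsmall} c(\thetavecmicro_{*}^{0})=g'(1)\,\mathbb{E}_{p}\big[\nabla_{\thetavecsmall}\log f(\xvec;\thetavecmicro_{*}^{0})\big].
\]
This vanishes because $\thetavecmicro_{*}^{0}$ maximizes $\mathbb{E}_{p}[\log f(\xvec;\thetavecmicro)]$ by \eqref{eq:pseudo_def_gen}, is interior and unique (Condition \ref{cond1}), and the expectation is differentiable (Conditions \ref{cond2} and \ref{cond4}). Since $c(\thetavecmicro_{*}^{0})=1$, also $\nabla_{\gammavecsmall}\log c(\thetavecmicro_{*}^{0})=0$. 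Therefore
\[
\nabla_{\gammavecsmall}\log\tilde{p}^{(o)}(\xvec;\thetavecmicro_{*}^{0})=g'(1)\,\nabla_{\thetavecsmall}\log f(\xvec;\thetavecmicro_{*}^{0}).
\]
Because $g'(1)\neq0$, this is exactly \eqref{eq:lin_p_tilde} with $\Wmat=\frac{1}{g'(1)}\Imat_{N_2}$, a matrix independent of $\xvec$. Invoking Proposition \ref{theorem_tightest} then gives $\text{\textbf{nMCRB}}_{\tilde{p}^{(o)}}=\text{\textbf{MCRB}}(\thetavecmicro_{*}^{0})$.

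The step that needs care is the vanishing of $\nabla_{\gammavecsmall}\log c$, which relies on the first-order optimality of the pseudo-true parameter. A direct check is also available: substituting $g'(1)\nabla\log f$ gives $\Jmat_{\tilde p}=g'(1)^2\Bmat$ and $\Bmat_{\tilde p;f}=g'(1)\Bmat$. The nMCRB then becomes $\Amat^{-1}\Bmat\Bmat^{-1}\Bmat\Amat^{-1}$, which equals the MCRB provided $\Bmat$ is nonsingular.
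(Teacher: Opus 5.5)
Your proposal is correct and follows essentially the same route as the paper's proof in Appendix~\ref{App:P_tilde_proof}. For P.1 you use nonnegativity plus normalization, for P.2 you use $g(1)=1$ and $c(\gammavecmicro_0)=1$, and for P.3 you combine the score decomposition with $\nabla_{\gammavecsmall} r = r\,\nabla_{\gammavecsmall}\log f$, differentiation under the integral defining $c(\gammavecmicro)$, and first-order optimality of the pseudo-true parameter, which makes $\nabla_{\gammavecsmall}\log c(\gammavecmicro_0)$ vanish.

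Your added caveats are assumptions the paper leaves implicit: finiteness of $c(\gammavecmicro)$, and nonsingularity of $\Bmat(\thetavecmicro_*^0)$ so that $\Jmat_{\tilde{p}}=g'(1)^2\Bmat(\thetavecmicro_*^0)$ is invertible in your direct check (the same computation the paper does in Appendix~\ref{appendix_newMCRB}). One small correction: $c(\gammavecmicro)>0$ follows from $g>0$, not merely from the integrand being nonnegative.
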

          \vspace{-0.15cm}
        \begin{IEEEproof}
            The proof appears in Appendix \ref{App:P_tilde_proof}.
        \end{IEEEproof}

        It should be noted that since $f(\xvec;\thetavecmicro)$ is strictly positive on $\Omega_{\xvec}\times\Omega_{\thetavecsmall}$ (see after \eqref{eq:pseudo_def}), the \ac{pdf} $\tilde{p}^{(o)}(\xvec; \gammavecmicro)$ in \eqref{eq:gen_shape_ptilde} is well-defined for all $\gammavecmicro\in\Omega_{\thetavecsmall}$ and inherits the same support as $p(\xvec;\varthetavecmicro)$.
        In fact, the \ac{pdf} $\tilde{p}^{(o)}(\xvec;\gammavec)$ is a reweighted version of the true model $p(\xvec;\varthetavecmicro_{0})$, where the weights are determined by the likelihood ratio, $\tfrac{f(\xvec;\gammavecsmall)}{f(\xvec;\thetavecsmall_{*}^{0})}$, which measures how well the observation vector $\xvec$ fits a general parameter $\gammavecmicro$ relative to the pseudo-true parameter $\gammavecmicro_0=\thetavecmicro_{*}^{0}$. The likelihood ratio is then transformed by the function $g(\cdot)$ to yield $\tilde{p}^{(o)}(\xvec; \gammavecmicro)$.  
        Here, the role of the factor $g\bigl(\tfrac{f(\xvec;\gammavecsmall)}{f(\xvec;\thetavecsmall_{*}^{0})}\bigr)$ is twofold: it ensures that the pointwise equivalence holds at the operating point, and enforces the proportional-score condition in \eqref{eq:lin_p_tilde}.
        Examples of possible $\tilde{p}^{(o)}(\xvec; \gammavecmicro)$ functions are given in Box \ref{box:example_P_tilde}.
        
        From a performance-bound perspective, $\tilde{p}^{(o)}(\xvec;\gammavecmicro)$ can be viewed as the least informative distribution within the class of pointwise equivalent models, as it maximizes the attainable \ac{mse} lower bound while imposing pointwise constraints. 
        In this sense, $\tilde{p}^{(o)}(\xvec;\gammavecmicro)$ acts as an information envelope around the true model, preserving only pointwise information while discarding higher-order structure that misspecified estimators cannot utilize. 
        Unlike Vuong’s approach, where score proportionality is imposed by construction, Proposition \ref{theorem_tightest} shows that such proportionality emerges naturally as an optimality condition of a supremum over pointwise equivalent models.

        \begin{figure}
            \refstepcounter{myboxcounter} 
            \begin{mybox}[label=box:example_P_tilde]{\hspace{-0.05cm}Examples of Auxiliary Functions from Proposition \hspace{-0.04cm}\ref{claim7}}
                \textit{\textbf{Example 1:}} 
                    By taking $g(z) = z$, $\forall z>0$, we obtain
                    \begin{boxequation}
                        \tilde{p}^{(o)}(\xvec; \gamma) = \frac{1}{c(\gamma)} \frac{f(\xvec; \gamma)}{f(\xvec; \theta_{*}^{0})} p(\xvec; \vartheta_{0}),
                    \end{boxequation}
                    
                    \noindent where $c(\gamma)$ is the normalization factor from \eqref{c_def}.
                    
                \textbf{\textit{Example 2:}}
                    \cite{vuong1986cramer} By taking $g(z)=1+\exp{(1-z)}$, $z>0$, one obtains
                    \begin{boxequation}
                        \hspace{-0.29cm} \tilde{p}^{(o)}(\xvec; \gamma) = \frac{1}{c(\gamma)}\! \Big[\! 1 +  \exp{\!\Big( 1 - \frac{f(\xvec;\gamma)}{f(\xvec;\theta_{*}^{0})}\Big)}\!\Big] p(\xvec; \vartheta_{0}),
                    \end{boxequation}
                    
                    \noindent where $c(\gamma)$ is the normalization factor from \eqref{c_def}. 
            \end{mybox} 
        \end{figure}
\section{Revised MCRB Theorem}
\label{sub:Final_MCRB}

    In this section, we formalize the class of estimators for which the \ac{mcrb} holds (Subsection \ref{new_class_subsec}) and provide a revised \ac{mcrb} theorem to replace Conjecture \ref{instead_of_theorem} (Subsection \ref{revised_theorem_subsection}), based on the results from the previous section.
    Then, we define the concept of a misspecified efficiency, and establish its relations with the \ac{mml} estimator (Subsection \ref{efficent_subsection}).

\vspace{-0.25cm}
    \subsection{Class of estimators} 
    \label{new_class_subsec}
        We now identify the class of estimators for which the  \ac{mcrb} is a valid lower bound.  By Proposition \ref{theorem_tightest}, the \ac{mcrb} is the maximum of the naive bound over the class of pointwise equivalent models. Thus, the relevant class of estimators is obtained by requiring naive local misspecified unbiasedness 
         \ac{wrt} a pointwise equivalent \ac{pdf} that attains this maximum.

         More precisely, let $\tilde{p}^{(o)}(\xvec;\gammavecmicro)$ be a pointwise equivalent \ac{pdf} that attains the maximum in \eqref{eq:MCRB_sup}. Then, according to Definition \ref{def:missp_unbias}, the associated estimator class is characterized by naive local misspecified unbiasedness \ac{wrt} $\tilde{p}^{(o)}$, i.e., by  \eqref{pointwise_m_unbiasedness_1_tilde} and \eqref{eq:local_miss_bias} with $\tilde{p}^{(o)}$. 
         Proposition \ref{theorem_tightest} further shows that every such optimizer satisfies the proportional-score condition \eqref{eq:lin_p_tilde}. Hence, the local unbiasedness condition expressed through the auxiliary score $\nabla_{\gammavecsmall}\log \tilde{p}^{(o)}(\xvec;\gammavecmicro_0)$ can be rewritten equivalently in terms of the assumed score $\nabla_{\thetavecsmall}\log f(\xvec;\thetavecmicro_*^0)$. 
         As shown in Appendix \ref{appendix_newMCRB}, this yields an equivalent definition of unbiasedness that depends only on the true and assumed \acp{pdf}, $p$ and $f$, and does not explicitly involve $\tilde{p}^{(o)}(\xvec;\gammavecmicro)$.
        \begin{definition}[Revised misspecified local unbiasedness]
        \label{new_miss_unbiasdness}
            Let the assumed \ac{pdf} $f(\xvec;\thetavecmicro)$ satisfy Conditions \ref{cond1}-\ref{cond4}, and let the true \ac{pdf} be $p(\xvec;\varthetavecmicro)$. An estimator $\hat{\thetavec}(\xvec)$ is a {\em{locally misspecified-unbiased}} estimator of $\thetavecmicro_{*}^{0}$ at $\varthetavecmicro = \varthetavecmicro_{0}$ if 
            \begin{subequations}
                \beqna
                    \mathbb{E}_{p} [ \hat{\thetavec}(\xvec) - \thetavec_{*}^{0} ] &=& \zerovec , \label{new_miss_unbiasdness_cond1}\hspace{3cm}
                \\
                     \mathbb{E}_{p} [ ( \hat{\thetavec}(\xvec) - \thetavec_{*}^{0} ) \nabla_{\thetavecsmall} \log f(\xvec; \thetavecmicro_{*}^{0}) ]  &=& {\Amat}^{-1}(\thetavecmicro_{*}^{0}) \Bmat(\thetavecmicro_{*}^{0}) . \label{new_miss_unbiasdness_cond2}
                \eeqna
            \end{subequations} 
        \end{definition}

        Definition \ref{new_miss_unbiasdness} can be interpreted as the optimizer-induced version of naive local misspecified unbiasedness.
        In particular, the pointwise unbiasedness condition in \eqref{new_miss_unbiasdness_cond1} is identical to the original pointwise unbiasedness in \eqref{pointwise_m_unbiasedness_1}. The condition \eqref{new_miss_unbiasdness_cond2} is the \emph{first-order} (local) unbiasedness condition obtained after eliminating the auxiliary optimizer $\tilde{p}^{(o)}$ using the proportional-score relation in \eqref{eq:lin_p_tilde}. Thus, \eqref{new_miss_unbiasdness_cond2} expresses the relevant local constraint through the assumed score $\nabla_{\thetavecsmall}\log f(\xvec;\thetavecmicro_*^0)$ rather than the true score in \eqref{17_tmp}.
        This reformulation is important for two reasons.  First, it identifies the estimator class associated with the maximum characterization of the \ac{mcrb} in Proposition~\ref{theorem_tightest}. Second, it yields a condition stated solely in terms of the standard misspecified information matrices, without explicit reference to the auxiliary \ac{pdf} $\tilde{p}^{(o)}$. 
        In the correctly specified case ($f=p$ and $\thetavec_*^{0}=\varthetavec_0$),
        \eqref{new_miss_unbiasdness_cond2} reduces to the standard local unbiasedness condition of the \ac{crb}.

    \subsection{Revised MCRB theorem}
    \label{revised_theorem_subsection}
        \begin{theorem}[The revised misspecified Cram$\acute{\text{e}}$r-Rao bound]
        \label{thm:Our_MCRB}
            Let the assumed \ac{pdf} $f(\xvec;\thetavecmicro)$ satisfy Conditions \ref{cond1}-\ref{cond4}, and $\hat{\thetavec}(\xvec)$ be a locally misspecified unbiased estimator as defined in Definition \ref{new_miss_unbiasdness}. Then,
            \beqna
            \label{eq:finally_our_bound}
                {\text{\textbf{MSE}}} \big(\hat{\thetavecmicro}(\xvec),\thetavecmicro_{*}^{0} \big) \succeq \text{\textbf{MCRB}}(\thetavecmicro_{*}^{0}),
            \eeqna
            where ${\text{\textbf{MCRB}}}(\thetavecmicro_{*}^{0})$ is the classical \ac{mcrb} from 
            \eqref{eq:def_MCRB}. 
            Equality is obtained \textit{iff} 
            \beqna
            \label{eq:MVU}
                \hat{\thetavec}(\xvec) = \thetavec_{*}^{0} + {\Amat}^{-1}(\thetavecmicro_{*}^{0}) \,\, {\nabla_{\thetavecsmall}}^{\!\!\! T} \! \log f(\xvec;\thetavecmicro_{*}^{0}),~~a.s.
            \eeqna
        \end{theorem}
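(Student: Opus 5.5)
The most direct route is a covariance (Cauchy--Schwarz) inequality built on the assumed score instead of the true one. Write $\mathbf{e}(\xvec)\define\hat{\thetavec}(\xvec)-\thetavec_*^0$ and $\mathbf{s}(\xvec)\define{\nabla_{\thetavecsmall}}^{\!\!\! T}\log f(\xvec;\thetavecmicro_*^0)$. The first step is to note that, by Condition \ref{cond4} and the optimality of the pseudo-true parameter in \eqref{eq:pseudo_def_gen} (an interior maximizer under Condition \ref{cond1}), $\mathbb{E}_p[\mathbf{s}(\xvec)]=\zerovec$. We also have $\mathbb{E}_p[\mathbf{s}\mathbf{s}^T]=\Bmat(\thetavecmicro_*^0)$ by \eqref{eq:def_B_theta}. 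Condition \eqref{new_miss_unbiasdness_cond2} then reads $\mathbb{E}_p[\mathbf{e}\,\mathbf{s}^T]=\Amat^{-1}\Bmat$, where all matrices are evaluated at $\thetavecmicro_*^0$.

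Next, fix $\Amat=\Amat(\thetavecmicro_*^0)$, which is nonsingular by Condition \ref{cond3}, and expand
\begin{equation*}
\mathbf{0}\preceq\mathbb{E}_p\big[(\mathbf{e}-\Amat^{-1}\mathbf{s})(\mathbf{e}-\Amat^{-1}\mathbf{s})^T\big]
={\text{\textbf{MSE}}}-\Amat^{-1}\Bmat\Amat^{-1}-\Amat^{-1}\Bmat\Amat^{-1}+\Amat^{-1}\Bmat\Amat^{-1}.
\end{equation*}
The cross terms are handled using \eqref{new_miss_unbiasdness_cond2} and its transpose, with $\Amat$ and $\Bmat$ symmetric. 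The right-hand side collapses to ${\text{\textbf{MSE}}}-\text{\textbf{MCRB}}(\thetavecmicro_*^0)$, which gives \eqref{eq:finally_our_bound}.

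An alternative is the general Cauchy--Schwarz form ${\text{\textbf{MSE}}}\succeq \mathbb{E}[\mathbf{e}\mathbf{s}^T]\Bmat^{-1}\mathbb{E}[\mathbf{s}\mathbf{e}^T]$. That route, however, requires $\Bmat$ to be invertible, and the direct completion of the square above avoids this assumption. For consistency with the paper's narrative, I would also remark that the same inequality follows from Proposition \ref{claim2_tmp} applied to $\tilde{p}^{(o)}$ of Proposition \ref{claim7}. Via \eqref{eq:lin_p_tilde} with $\Wmat=\Imat/g'(1)$, the $\tilde p^{(o)}$-score is $g'(1)\mathbf{s}$, and $\Bmat_{\tilde p;f}=g'(1)\Bmat$ and $\Jmat_{\tilde p}=g'(1)^2\Bmat$. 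Hence \eqref{17_tmp_tilde} is equivalent to \eqref{new_miss_unbiasdness_cond2}, and ${\text{\textbf{nMCRB}}}_{\tilde p^{(o)}}=\text{\textbf{MCRB}}$ by Proposition \ref{theorem_tightest}. That gives the bound, but I would present the direct argument as the proof itself.

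For the equality condition, equality in the display holds iff the trace of the PSD matrix $\mathbb{E}_p[(\mathbf{e}-\Amat^{-1}\mathbf{s})(\mathbf{e}-\Amat^{-1}\mathbf{s})^T]$ vanishes, i.e.\ iff $\mathbf{e}=\Amat^{-1}\mathbf{s}$ a.s., which is \eqref{eq:MVU}. For the converse, I would check that \eqref{eq:MVU} indeed defines a member of the class. It is unbiased because $\mathbb{E}_p[\mathbf{s}]=\zerovec$, and it satisfies \eqref{new_miss_unbiasdness_cond2} because $\mathbb{E}[\Amat^{-1}\mathbf{s}\mathbf{s}^T]=\Amat^{-1}\Bmat$. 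Its MSE is therefore exactly $\text{\textbf{MCRB}}$.

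The main obstacle is the justification of $\mathbb{E}_p[\mathbf{s}]=\zerovec$. This needs the pseudo-true parameter to be an interior stationary point of $\mathbb{E}_p[\log f(\xvec;\thetavecmicro)]$, together with the interchange of differentiation and integration from Condition \ref{cond4}. I would state this explicitly, as it is the only point where the definition of $\thetavec_*^0$ enters the argument.
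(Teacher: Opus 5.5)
Your proof is correct, but it is not the argument the paper writes out in Appendix~\ref{appendix_newMCRB}. Your completion of the square is the direct Cauchy--Schwarz derivation that the paper only mentions in a remark after the theorem. The paper's actual proof is the route you put in a side comment: it substitutes the optimal pointwise-equivalent density $\tilde p^{(o)}$ of Proposition~\ref{claim7} into Proposition~\ref{claim2_tmp}. It uses $\Bmat_{\tilde p;f}=g'(1)\Bmat$ and $\Jmat_{\tilde p^{(o)}}=g'(1)^2\Bmat$ to identify naive unbiasedness w.r.t.\ $\tilde p^{(o)}$ with Definition~\ref{new_miss_unbiasdness}. It then invokes Proposition~\ref{theorem_tightest} for $\text{nMCRB}_{\tilde p^{(o)}}=\text{MCRB}$ and reads off the equality condition from the naive one.

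Your route is self-contained and needs weaker hypotheses. It requires no regularity of $\tilde p^{(o)}$ (Conditions~\ref{cond1}--\ref{cond4}, or differentiation under the integral in $c(\cdot)$). It also avoids the scalarized Lagrangian argument of Appendix~\ref{App:naive_mcrb_theorem}. Finally, it does not need $\Bmat(\thetavec_*^0)$ to be invertible. The paper's route tacitly needs that invertibility, through $\Jmat_{\tilde p^{(o)}}^{-1}$ and the nonsingularity assumption in the Cauchy--Schwarz step behind Proposition~\ref{theorem_tightest}. In exchange, the paper's route explains \emph{why} Definition~\ref{new_miss_unbiasdness} is the right class: it is naive local unbiasedness w.r.t.\ the least-favorable pointwise-equivalent model, and the MCRB is the tightest bound of that type. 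Your proof takes the class as given.

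One correction of emphasis. Your inequality, and the ``only if'' half of the equality statement, use only \eqref{new_miss_unbiasdness_cond2}, $\mathbb{E}_p[\mathbf{s}\mathbf{s}^T]=\Bmat$, and the symmetry of $\Amat$ and $\Bmat$. Neither $\mathbb{E}_p[\mathbf{s}]=\zerovec$ nor \eqref{new_miss_unbiasdness_cond1} enters, just as the mean constraint is redundant for the classical CRB. The zero-mean score is needed only to check that the estimator in \eqref{eq:MVU} satisfies \eqref{new_miss_unbiasdness_cond1}, i.e., that equality is attainable inside the class. So it is not really the main obstacle of the proof.
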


        \begin{IEEEproof} 
            The proof appears in Appendix \ref{appendix_newMCRB}.
        \end{IEEEproof}
        In Box \ref{box:counter_example_cont}, we apply the new \ac{mcrb} theorem on the example in Box \ref{box:counter_example} and Box \ref{box:counter_example_cont_1}.

        Similar to the conventional derivation of the \ac{crb} \cite{kay}, the bound in \eqref{eq:finally_our_bound} can also be derived directly via the Cauchy-Schwarz inequality, using the auxiliary function $\nabla_{\thetavecsmall} \log f(\xvec; \thetavecmicro_{*}^{0}) $, and applying the new unbiasedness conditions, \eqref{new_miss_unbiasdness_cond1} and \eqref{new_miss_unbiasdness_cond2}. Alternatively, it can be derived by solving a constrained minimization problem with these conditions.

        The equality condition in \eqref{eq:MVU} requires that the estimation error lie in the closed linear span of the assumed score, $\nabla_{\thetavecsmall}\log f(\xvec;\thetavecmicro_{*}^{0})$, with coefficient matrix uniquely determined as $\Amat^{-1}(\thetavecmicro_{*}^{0})$. 
        The estimator defined in \eqref{eq:MVU} satisfies the revised unbiasedness conditions \eqref{new_miss_unbiasdness_cond1}--\eqref{new_miss_unbiasdness_cond2}, and attains the \ac{mcrb}.
        In general, however, the expression in \eqref{eq:MVU} does not define an implementable  estimator since it depends explicitly on the pseudo-true parameter $\thetavecmicro_{*}^{0}$, similar to the equality condition in the classical \ac{crb} \cite[p. 44-45]{kay} in correctly specified models. Nevertheless,  in certain cases efficient estimators do exist, as discussed below. 
        Moreover, \eqref{eq:MVU} provides a constructive characterization of the efficient direction and can be used to develop Fisher-scoring-type schemes for computing the \ac{mml} estimator under misspecification, thereby linking the equality condition to practical estimation algorithms.

            \vspace{-0.25cm}
    \subsection{Efficient Estimation Under the revised MCRB theorem}
    \label{efficent_subsection}

        Having identified the class of estimators associated with the \ac{mcrb}, we now characterize the estimator that attains the bound in the misspecified setting.
        This leads to the following revised definition of efficiency in misspecified models.
        \begin{definition}[Misspecified efficient estimator]
        \label{def:eff_est}
            Let $\Omega_{\varthetavecsmall}^{\circ}\subseteq\Omega_{\varthetavecsmall}$ be an open set. 
            An estimator $\hat{\thetavecmicro}_{\mathrm{eff}}(\xvec)$ is a uniformly misspecified efficient estimator  on $\Omega_{\varthetavecsmall}^{\circ}$ if, for every $\varthetavecmicro\in\Omega_{\varthetavecsmall}^{\circ}$, of $\thetavecmicro_{*}^{0}$ at $\varthetavecmicro=\varthetavecmicro_{0}$ if
            (i) it is a locally misspecified unbiased estimator of $\thetavec_{*}(\varthetavecmicro)$ 
            in the sense of Definition \ref{new_miss_unbiasdness}, and (ii) it achieves the corresponding  \ac{mcrb}, i.e., 
            \beqna
                \text{\textbf{MSE}} \big(\hat{\thetavecmicro}_{\mathrm{eff}}(\xvec),\thetavecmicro_{*}(\varthetavecmicro)\big) = \text{\textbf{MCRB}} \big(\thetavecmicro_{*}(\varthetavecmicro)\big), ~ \forall \, \varthetavecmicro \in \Omega_{\varthetavecsmall}^{\circ}.
            \eeqna
        \end{definition}
        In the correctly specified case, if an efficient estimator exists, then it is attained by the \ac{ml} estimator \cite{kay}. The definition above provides the corresponding notion under model misspecification, where efficiency is required to hold {\em{uniformly}} over an open set of operating points, enabling a non-asymptotic characterization analogous to the classical case.

        Under model misspecification, an analogous role is played by the \ac{mml} estimator, as formalized in the following theorem.

        \begin{theorem}[Misspecified efficiency and the \ac{mml} estimator]
        \label{thm:eff_est}
            Let the assumed \ac{pdf} $f(\xvec;\thetavecmicro)$ satisfy Conditions \ref{cond1}-\ref{cond4}. 
            Assume that there exists an open set $\Omega_{\varthetavecsmall}^{\circ}\subseteq\Omega_{\varthetavecsmall}$ such that 
            the image $\thetavec_{*}(\Omega_{\varthetavecsmall}^{\circ})$ contains a nonempty open set 
            $\Omega_{\thetavecsmall}^{\circ}\subseteq\Omega_{\thetavecsmall}$. 
            If a uniformly misspecified efficient estimator $\hat{\thetavecmicro}_{\mathrm{eff}}(\xvec)$ exists on $\Omega_{\varthetavecsmall}^{\circ}$, then
            for every $\xvec$ such that $\hat{\thetavecmicro}_{\mathrm{eff}}(\xvec)\in\Omega_{\thetavecsmall}^{\circ}$, we have
            \beqna
            \label{eq:MML_eff}
                \hat{\thetavec}_{\mathrm{eff}}(\xvec) = \hat{\thetavec}_{\text{MML}}(\xvec).
            \eeqna
        \end{theorem}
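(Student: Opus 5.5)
The plan mirrors the classical argument that an efficient estimator, if it exists, must be the ML estimator.

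\textbf{Step 1: pointwise form.} Fix any $\varthetavec\in\Omega_{\varthetavecsmall}^{\circ}$. By Definition \ref{def:eff_est}, $\hat{\thetavec}_{\mathrm{eff}}$ is locally misspecified unbiased for $\thetavec_*(\varthetavecmicro)$ and attains the MCRB. The equality condition \eqref{eq:MVU} of Theorem \ref{thm:Our_MCRB} therefore gives
\begin{equation*}
\hat{\thetavec}_{\mathrm{eff}}(\xvec)=\thetavec_*(\varthetavecmicro)+\Amat_{\varthetavecsmall}^{-1}\big(\thetavecmicro_*(\varthetavecmicro)\big)\,{\nabla_{\thetavecsmall}}^{\!\!\! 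T}\log f\big(\xvec;\thetavecmicro_*(\varthetavecmicro)\big)
\end{equation*}
almost surely under $p(\xvec;\varthetavecmicro)$. Here $\Amat_{\varthetavecsmall}$ is the Hessian information \eqref{eq:def_A_theta} taken with expectation under $p(\cdot;\varthetavecmicro)$; it is nonsingular by Condition \ref{cond3}. Rearranging,
\begin{equation*}
{\nabla_{\thetavecsmall}}^{\!\!\! T}\log f(\xvec;\thetavecmicro)=\Amat_{\varthetavecsmall}\big(\thetavecmicro\big)\big(\hat{\thetavec}_{\mathrm{eff}}(\xvec)-\thetavec\big),\qquad \thetavec=\thetavec_*(\varthetavecmicro).
\end{equation*}
Since $\thetavec_*(\Omega_{\varthetavecsmall}^{\circ})\supseteq\Omega_{\thetavecsmall}^{\circ}$, every $\thetavec\in\Omega_{\thetavecsmall}^{\circ}$ is the image of some $\varthetavec\in\Omega_{\varthetavecsmall}^{\circ}$. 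Choosing one such preimage for each $\thetavec$ gives a matrix $\Amat(\thetavec)$ for which the identity holds for all $\thetavec\in\Omega_{\thetavecsmall}^{\circ}$. This is the misspecified analogue of the classical score factorization $\partial\log f/\partial\theta=I(\theta)(g(\xvec)-\theta)$.

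\textbf{Step 2: stationarity and maximality.} Take any $\xvec$ with $\hat{\thetavec}_{\mathrm{eff}}(\xvec)\in\Omega_{\thetavecsmall}^{\circ}$ and evaluate the identity at $\thetavec=\hat{\thetavec}_{\mathrm{eff}}(\xvec)$. The gradient of $\log f(\xvec;\cdot)$ vanishes there, so $\hat{\thetavec}_{\mathrm{eff}}(\xvec)$ is a stationary point of the assumed likelihood. To show it is the maximizer in \eqref{eq:def_MML}, integrate the score along the segment from $\hat{\thetavec}_{\mathrm{eff}}(\xvec)$ to a nearby $\thetavec$ inside the open set. Alternatively, differentiate the identity once more at the stationary point, which gives ${\nabla_{\thetavecsmall}}^{2}\log f=-\Amat$ there. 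Because $\Amat(\thetavecmicro_*)$ is positive definite (the pseudo-true parameter is a unique maximizer of $\mathbb{E}_p[\log f]$, Condition \ref{cond1}), the point is a strict local maximum. Combining this with uniqueness from Condition \ref{cond1} and the MML definition yields \eqref{eq:MML_eff}.

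\textbf{Main obstacle.} The difficulty lies in Step 1 and the passage to Step 2, for two reasons.

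First, the identity for a given $\thetavec$ holds only almost surely under $p(\cdot;\varthetavecmicro)$, with an exceptional null set that depends on $\varthetavec$. It must hold simultaneously for all $\thetavec$ at a fixed $\xvec$. I would first restrict to a countable dense subset of $\Omega_{\thetavecsmall}^{\circ}$, where the union of null sets is still null. I would then extend to the whole set by continuity of $\nabla_{\thetavecsmall}\log f$ (Condition \ref{cond2}). This also requires $\Amat(\thetavec)$ to be continuous, or at least chosen regularly, for which I would use a continuous selection of preimages.

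Second, going from a local to a global maximum for the $\arg\max$ needs either global concavity on $\Omega_{\thetavecsmall}^{\circ}$, which follows if $\Amat\succ 0$ on the segment, or an argument that the score factorization leaves no other stationary point in $\Omega_{\thetavecsmall}^{\circ}$. I would present the statement as holding for almost every such $\xvec$, interpreting the MML estimator as the stationary point of the likelihood in $\Omega_{\thetavecsmall}^{\circ}$, as in the classical proof in \cite{kay}.
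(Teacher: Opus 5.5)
Your argument is the paper's proof. You apply the equality condition \eqref{eq:MVU} at each $\varthetavec\in\Omega_{\varthetavecsmall}^{\circ}$, use that $\thetavec_{*}(\Omega_{\varthetavecsmall}^{\circ})$ covers $\Omega_{\thetavecsmall}^{\circ}$ to get the identity for every $\thetavec\in\Omega_{\thetavecsmall}^{\circ}$, and substitute $\thetavec=\hat{\thetavec}_{\mathrm{eff}}(\xvec)$ to obtain $\nabla_{\thetavecsmall}\log f(\xvec;\hat{\thetavec}_{\mathrm{eff}}(\xvec))=\zerovec$. The paper then, exactly like your fallback, identifies this root of the likelihood equation with the MML estimator, without addressing the null-set or preimage-selection issues you raise.

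The one piece to discard is the appeal to Condition \ref{cond1} for global maximality. That condition asserts uniqueness of the maximizer of $\mathbb{E}_{p}[\log f(\xvec;\thetavecmicro)]$, not of $\log f(\xvec;\cdot)$ for a fixed $\xvec$, so it cannot promote the stationary point to the $\arg\max$ in \eqref{eq:def_MML}.
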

        \begin{IEEEproof}
            According to Definition \ref{def:eff_est}, $\hat{\thetavec}_{\mathrm{eff}}(\xvec)$ is a misspecified unbiased estimator that achieves the \ac{mcrb}. Hence, the \ac{mcrb} equality condition in \eqref{eq:MVU}, implies that for any $\varthetavecmicro \in \Omega_{\varthetavecsmall}^{\circ}$,
            \beqna
                \hat{\thetavec}_{\mathrm{eff}}(\xvec) = \thetavec_{*}(\varthetavecmicro) + \Amat^{-1}\!\big(\thetavecmicro_{*}(\varthetavecmicro)\big) \nabla_{\thetavecsmall}^{T} \log f\big(\xvec;\thetavecmicro_{*}(\varthetavecmicro)\big).
            \eeqna
            By assumption, $\thetavec_{*}(\Omega_{\varthetavecsmall}^{\circ})=\Omega_{\thetavecsmall}^{\circ}$. 
            Therefore, the equality above holds for all $\thetavec \in \Omega_{\thetavecsmall}^{\circ}$, i.e.,
            \beqna
            \label{eq:MVU_eff_theta}
                \hat{\thetavec}_{\mathrm{eff}}(\xvec) = \thetavec + \Amat^{-1}(\thetavec) \nabla_{\thetavecsmall}^{T} \log f(\xvec;\thetavec), \quad \forall \thetavec \in \Omega_{\thetavecsmall}^{\circ}.
            \eeqna
            Now, for every $\xvec$ such that $\hat{\thetavecmicro}_{\mathrm{eff}}(\xvec)\in\Omega_{\thetavecsmall}^{\circ}$, substituting $\thetavec = \hat{\thetavecmicro}_{\mathrm{eff}}(\xvec)$ in \eqref{eq:MVU_eff_theta} yields
            \beqna
                \nabla_{\thetavecsmall} \log f\big(\xvec;\hat{\thetavecmicro}_{\mathrm{eff}}(\xvec)\big) = \zerovec.
            \eeqna
            Thus, $\hat{\thetavecmicro}_{\mathrm{eff}}(\xvec)$ satisfies the likelihood equation and is therefore an \ac{mml} estimator, $\hat{\thetavecmicro}_{\text{MML}}(\xvec)$.
        \end{IEEEproof}

        Theorem \ref{thm:eff_est} provides the misspecified analogue of the classical relation between the \ac{ml} estimator and efficiency \ac{wrt} the \ac{crb} under correct specification. The result relies on a uniform equality condition over an open set. In contrast, the well-known asymptotic efficiency of the \ac{mml} estimator \cite{white1982maximum} is inherently local, depending only on the behavior of the likelihood in a neighborhood of the pseudo-true parameter.

        \begin{figure}
            \refstepcounter{myboxcounter} 
            \begin{mybox}[label=box:counter_example_cont]{Illustration of the Revised MCRB Estimator Class}
                Consider the model from Box \ref{box:counter_example}. By substituting \eqref{A_sc}-\eqref{B_sc} in \eqref{new_miss_unbiasdness_cond2}, we obtain that the requirement for a locally misspecified unbiased estimator, $\hat{\theta}(\xvec)$, from Definition \ref{new_miss_unbiasdness} is 
                \vspace{-0.2cm}
                \begin{boxequation}
                \label{ex1_cond2}
                    \hspace{-0.23cm} \mathbb{E}_{p}\bigg[ (\hat{\theta}(\xvec) - \theta_{*}^{0}) \nabla_{\! \theta} \! \log f(\xvec; \theta_{*}^{0}) \bigg] = \frac{\varepsilon + (N - 1) \sigma^{2}}{N \sigma^{2}}.
                \end{boxequation}
                The single-sample estimator $\hat{\theta}_{1}(\xvec) = x_{1}$ has a lower \ac{mse} than the \ac{mcrb}. This can now be explained by the fact that it does not satisfy the unbiasedness condition in \eqref{ex1_cond2}, i.e.
                \vspace{-0.2cm}
                \begin{boxequation}
                    \hspace{-0.3cm} \mathbb{E}_{p} \Big[ (x_{1} - \theta_{*}^{0}) \nabla_{\! \theta} \! \log f(\xvec;\theta_{*}^{0}) \Big] \! = \! \frac{\varepsilon}{\sigma^{2}} \! \neq \! A^{-1}(\theta_{*}^{0}) B(\theta_{*}^{0}). \hspace{-0.15cm}
                \end{boxequation}
                \noindent In contrast, the \ac{mml} estimator $\hat{\theta}_{\text{MML}}(\xvec) = \frac{1}{N} \onevec^{T} \xvec$ is a locally misspecified unbiased estimator, since
                \begin{boxequation}
                    \hspace{-0.35cm} \mathbb{E}_{p} \Big[ \Big(\frac{\onevec^{T} \xvec}{N} - \theta_{*}^{0} \Big) \nabla_{\! \theta} \! \log f(\xvec;\theta_{*}^{0}) \Big] = \frac{\varepsilon + (N - 1) \sigma^{2}}{N \sigma^{2}}. \hspace{-0.15cm}
                \end{boxequation}
                The \ac{mse} of the \ac{mml} estimator satisfies
                \begin{boxequation}
                    \text{MSE}(\hat{\theta}_{\text{MML}}(\xvec), \theta_{*}^{0}) = \frac{1}{N^{2}} \onevec^{T} \Sigmamat \onevec = \text{MCRB}(\theta_{*}^{0}),
                \end{boxequation}

                \noindent where the last equality stems from \eqref{mcrb_ex1}.
                This result can be explained now by the fact that Definition \ref{new_miss_unbiasdness} characterizes the class of misspecified estimators bounded by the \ac{mcrb}.

                 In Fig. \ref{fig:misspecified_bias_gaussian_example}, we present the empirical biases of $\hat{\theta}_{1}(\xvec)$ and the \ac{mml} estimator based on Definition \ref{new_miss_unbiasdness}. 
                 That is, we present the ``regular bias", $\mathbb{E}_{p} \big[ \hat{\theta}(\xvec) - \theta_{*}^{0} \big]$, and the ``score bias", $\mathbb{E}_{p} \big[ \big( \hat{\theta}(\xvec) - \theta_{*}^{0} \big) \nabla_{\theta} \log f(\xvec; \theta_{*}^{0}) \big] - {A}^{-1}(\theta_{*}^{0}) B(\theta_{*}^{0})$. It can be seen that the \ac{mml} estimator satisfies both conditions of Definition \ref{new_miss_unbiasdness}, whereas the single-sample estimator $\hat{\theta}_{1}(\xvec)$ violates the score-unbiasedness condition from \eqref{new_miss_unbiasdness_cond2}, and therefore lies outside the class relevant for the \ac{mcrb}. This explains the results presented in Fig. \ref{fig:misspecified_gaussian_example}.
                    
                    \centering
                    \includegraphics[width=0.7\linewidth]{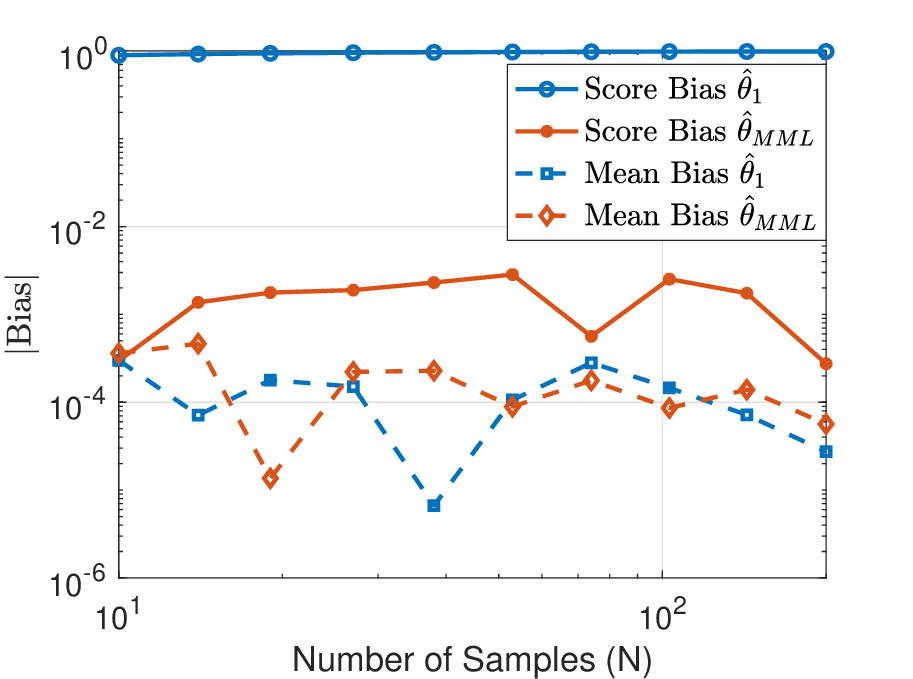}
                    \caption{Bias for the example in Box~\ref{box:counter_example}. }
                    \label{fig:misspecified_bias_gaussian_example}
                  \end{mybox} 
        \end{figure}   

\section{Simulations}
    In the following simulations, we illustrate the theoretical results and examine the behavior of the proposed framework under model misspecification for \ac{doa} estimation.
    We consider a planar array of $M$ sensors that receives a narrowband signal from a single far-field source located at an unknown deterministic angle, $\phi\in \Omega_{\phi}$, relative to the array. The received signal is modeled as
    \be
        \xvec = \avec(\phi) s + \vvec 
    \ee
    where $s\in \mathbb{C} $ is an unknown deterministic complex signal, and $\avec: \Omega_{\phi} \rightarrow \mathbb{C}^M$ is the steering vector.  
    We define the real parameter vector $\varthetavecmicro = [\phi,s_R,s_I]^T$, where $s_R$ and $s_I$ are the real and imaginary parts of $s$,  respectively.
    The additive noise, $\vvec$ is modeled as a circularly symmetric complex Gaussian vector with zero mean and true covariance matrix $\bsigma$, while the estimator assumes a covariance matrix $\sigma^2\Imat_{M}$. 
    That is, model misspecification arises from assuming spatially white noise instead of spatially colored noise, as frequently occurs in the presence of interference.
    Thus, the assumed and true \acp{pdf} are 
    \[p(\xvec;\varthetavecmicro)={\cal CN}(\xvec,\avec(\phi)s,\bsigma), ~~f(\xvec;\thetavecmicro)={\cal CN}(\xvec,\avec(\phi)s,\sigma^2\Imat_M).
    \]

    {\bf{MCRB and unbiasedness:}}
    In this setting, it can be shown that the pseudo-true parameter vector from \eqref{eq:pseudo_def} coincides with the true parameters, i.e. $\thetavecmicro_* = \varthetavecmicro$.
    For brevity, we define the following shorthand notation:
    \beqna
        F_{11} &= \dot{\avec}^H \Sigmamat \dot{\avec}, \quad
        F_{12} = \avec^H \Sigmamat \dot{\avec}, \quad
        F_{22} = \avec^H \Sigmamat \avec.
    \eeqna
    Substitution of this model in 
    \eqref{eq:def_A_theta} and \eqref{eq:def_B_theta} results is
    \beqna
        \Amat(\thetavecmicro) = \frac{2 M}{\Tr(\bsigma)} {\text{diag}}\left( \|\dot{\avec}\|^2|s|^2, M, M\right)
    \eeqna
    and
    \beqna
        \Bmat(\thetavecmicro)  = \frac{2 M^2}{\Tr^{2}(\bsigma)} \hspace{-0.1cm}
        \scalebox{0.97}{$ \begin{bmatrix}
            F_{11} |s|^2               & \text{Re}\{F_{12} s \} & - \text{Im}\{ F_{12}^{*} s\} \\
            \text{Re}\{F_{12} s\}        & F_{22}                 & 0                            \\
            -\text{Im}\{ F_{12}^{*} s \} & 0                      & F_{22}
        \end{bmatrix} $}, \hspace{-0.06cm}
    \eeqna
    where $\dot{\avec} \define  \nabla_{\phi} \avec(\phi)$. Therefore, the \ac{mcrb} is given by 
    \beqna
        \text{\textbf{MCRB}}(\thetavecmicro) = \frac{1}{2 M \|\dot{\mathbf{a}}\|^2 |s|^2} \nonumber \hspace{4.35cm} \\ \times
        \begin{bmatrix}
        \frac{M F_{11}}{\|\dot{\mathbf{a}}\|^2} & \text{Re}\{ F_{12} s \} & - \text{Im}\{ F_{12}^{*} s \} \\
        {\text{Re}}\{ F_{12} s \} & \frac{F_{22} \|\dot{\mathbf{a}}\|^2 |s|^2}{M} & 0 \\
        - \text{Im}\{ F_{12}^{*} s \} & 0 & \frac{F_{22} \|\dot{\mathbf{a}}\|^2 |s|^2}{M}
        \end{bmatrix}.
    \eeqna

    {\bf{Estimators:}}
        The \ac{mml} estimator under the assumed white-noise model and the ``oracle'' \ac{ml} estimator that uses the true covariance $\Sigmamat$ are given by:
        \begin{subequations}
            \beqna
                \hat{\phi}_{\text{MML}} \hspace{-0.2cm} &=& \hspace{-0.2cm} \arg\max_{\phi} \left|\avec^H(\phi)\xvec\right|^2, \label{eq:phi_MML}
                \\
                \hat{\phi}_{\text{ML}} &=& \hspace{-0.2cm} \arg\max_{\phi} \frac{\left|\avec^H(\phi){\Sigmamat}^{-1} \xvec \right|^2}{\avec^H(\phi) {\Sigmamat}^{-1}\avec(\phi)}. \label{eq:phi_OML}
            \eeqna
        \end{subequations}

   {\bf{Results:}} In \cref{DOA_vs_rho} we present simulation results for a uniform linear array with $M=8$ sensors, $s=e^{j\frac{\pi}{4}}$, and $\phi=\frac{\pi}{8}$. The true covariance is a Toeplitz matrix with entries $\bsigma_{i,j}=\sigma^2\rho^{|i-j|}$, where the real-value parameter $|\rho|<1$ controls the spatial correlation. For $\rho=0$, $\bsigma=\sigma^2\Imat_M$, and the assumed model is correctly specified. As $\rho$ increases, we have a stronger model misspecification. We set $\sigma^2=0.1$, which is a high \ac{snr} regime, so that the estimators are nearly pointwise unbiased in the sense of \eqref{new_miss_unbiasdness_cond1}.

In Fig. \ref{DOA_vs_rho}\textcolor{red}{a} we present the revised local misspecified-unbiasedness conditions for the estimator $\hat{\phi}$ from  \eqref{new_miss_unbiasdness_cond2}, i.e.  
\[\mathbb{E}_{p} [ ( \hat{\phi}- \phi ) \nabla_{\phi} \log f(\xvec; \thetavecmicro_{*}^{0})  ]  -\left[{\Amat}^{-1}(\thetavecmicro_{*}^{0}) \Bmat(\thetavecmicro_{*}^{0})\right]_{1,1}\] versus the correlation coefficient, $\rho$, for the two estimators. This figure shows that the \ac{mml} estimator satisfies the unbiasedness condition \eqref{new_miss_unbiasdness_cond2} for any $\rho$ and is therefore a member of the class of estimators for which the \ac{mcrb} is valid, as stated in \Cref{revised_theorem_subsection}.
In contrast, the oracle \ac{ml} estimator has a nonzero bias for $\rho>0$, and thus, is not a member of this class. 
   Fig. \ref{DOA_vs_rho}\textcolor{red}{b} shows the \ac{mse} of the \ac{mml} and \ac{ml} estimators for $\phi$ versus $\rho$, together with the \ac{mcrb} and the classical \ac{crb}, which is also the naive \ac{mcrb} in this case. 
Although the pseudo-true parameter coincides with the true parameter in this setting and the \ac{ml} estimator satisfies the conditions of \cref{instead_of_theorem}, Fig. \ref{DOA_vs_rho}\textcolor{red}{b} demonstrates that the \ac{mcrb} does not provide a valid lower bound for this estimator under misspecification, even asymptotically.

\vspace{-0.5cm}
    \begin{figure}[hbt]
        \centering
        \includegraphics[width=0.8\linewidth]{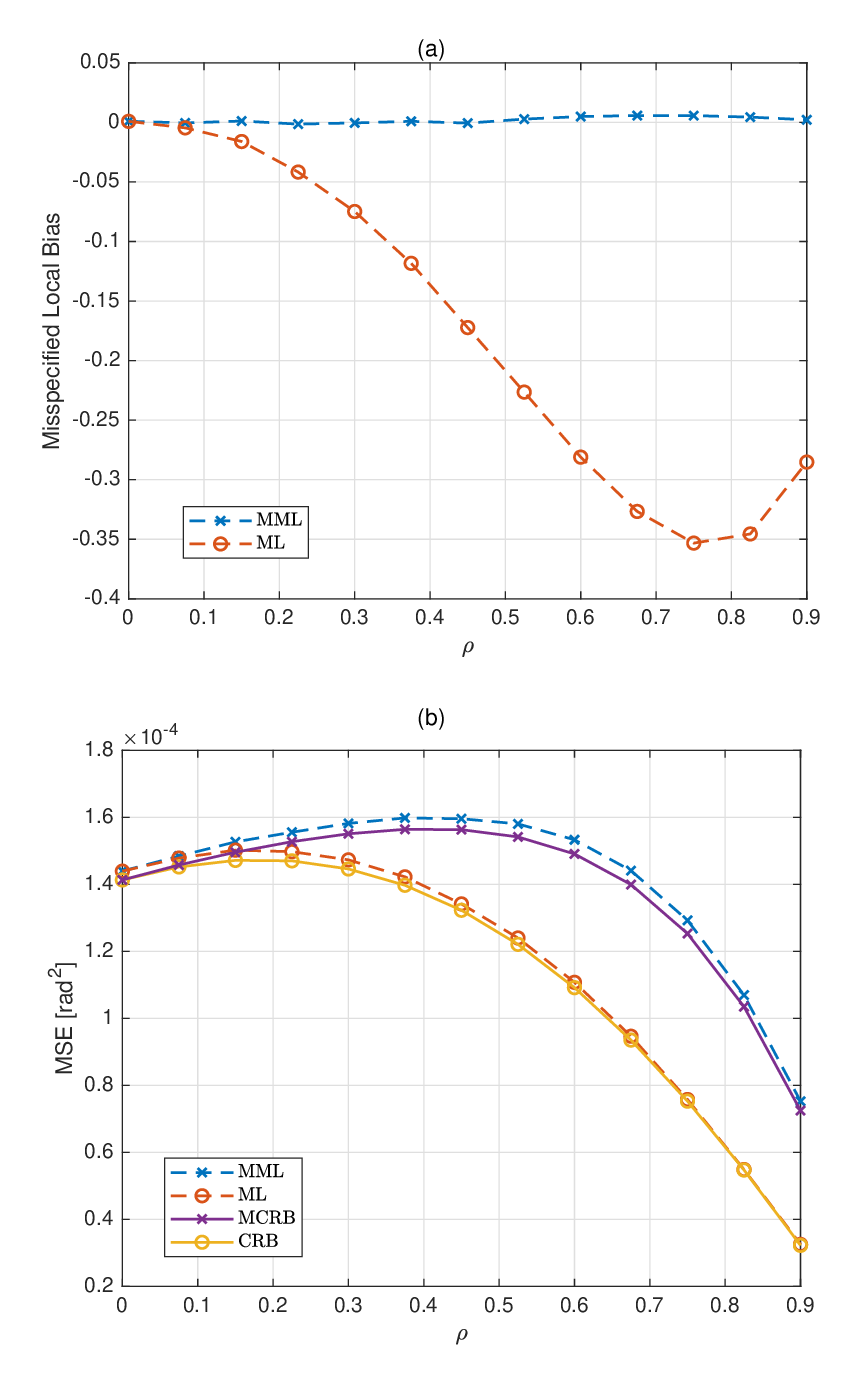}
        \caption{The (a) misspecified local bias  and (b) RMSE  versus the correlation coefficient, $\rho$.}
    \label{DOA_vs_rho}
    \end{figure} 

 \vspace{-0.5cm}
\section{Conclusions}
\label{Conclusions}
 \vspace{-0.05cm}
    This paper revisited the \ac{mcrb} and addressed fundamental gaps in its theoretical foundation. By introducing a principled notion of local misspecified unbiasedness and a family of pointwise equivalent models, we derived the \ac{mcrb} through an estimator-agnostic procedure that clarifies both its attainability and its structural properties. The resulting theory explicitly identifies the class of estimators for which the \ac{mcrb} is valid, and establishes a clear notion of efficiency under model misspecification, as well as its relation with the \ac{mml} estimator.
    Beyond providing a unified derivation, the proposed framework offers insight into the limitations of naive \ac{mcrb} and shows that the \ac{mcrb} can be interpreted as the tightest achievable naive \ac{mcrb}  with minimal information about the true model. Future research directions include extensions to cyclic \cite{Khatib_cyclicMCRB} and constrained \cite{Nitzan_Routtenberg_Tabrikian_2023} parameter spaces,  and development of large error bounds, such as the Barankin bound.

\appendices
\renewcommand{\thesection}{\Alph{section}}  

\newcommand{\customappendix}[1]{
    \refstepcounter{section} 
    \section*{Appendix \thesection{}: #1} 
}
    \vspace{-0.2cm}
    \customappendix{Proof of Claim \ref{claim_der_pt}}
    \label{App:der_PT}
        By using the definition of the expectation operator, Condition \ref{cond4}, and the chain rule, it can be verified that
        \vspace{-0.1cm}
        \beqna
        \label{integrals_app}
           \nabla_{\varthetavecsmall} \mathbb{E}_{p; \varthetavecsmall} \big[ \hat{\thetavec}(\xvec) - \thetavec_{*}(\varthetavecmicro) \big] = - \int_{\Omega_{\xvec}} \! (\nabla_{\varthetavecsmall} \thetavecmicro_{*}(\varthetavecmicro) ) \, p(\alphavecmicro; \varthetavecmicro) \ud \alphavecmicro \nonumber \\ + \int_{\Omega_{\xvec}} \! \big(\hat{\thetavec}(\alphavecmicro) - \thetavec_{*}(\varthetavecmicro) \big) \, \nabla_{\varthetavecsmall} p(\alphavecmicro; \varthetavecmicro) \ud \alphavecmicro \hspace{2.63cm} \nonumber \\= - \nabla_{\varthetavecsmall} \thetavecmicro_{*}(\varthetavecmicro) + {\mathbb{E}}_{p; \varthetavecsmall} [ (\hat{\thetavec}(\xvec) - \thetavec_{*}(\varthetavecmicro) ) \, \nabla_{\varthetavecsmall} \log p(\xvec; \varthetavecmicro) ],
        \eeqna
        where the last equality is obtained by using the expectation operator definition,
        and the facts that $\nabla_{\varthetavecsmall} \log p(\xvec; \varthetavec) =\frac{1}{p(\xvec; \varthetavecsmall)} \nabla_{\varthetavecsmall} p(\xvec; \varthetavec)$, and $\nabla_{\varthetavecsmall} \thetavecmicro_{*}(\varthetavecmicro)$ is a deterministic parameter. 
        By substituting the derivation from \eqref{integrals_app} in \eqref{local_m_unbiasedness_1}, we obtain that the requirement in \eqref{local_m_unbiasedness_1} can be written as the condition
        \beqna
        \label{new_condition}
             {\mathbb{E}}_{p} [ (\hat{\thetavec}(\xvec)- \thetavec_{*}(\varthetavec) ) \, \nabla_{\varthetavecsmall} \log p(\xvec; \varthetavecmicro_{0}) ] =  \nabla_{\varthetavecsmall} \thetavecmicro_{*}(\varthetavecmicro_{0}).
        \eeqna
        In \cite{vuong1986cramer},  it is shown that under Conditions \ref{cond1}-\ref{cond4}
        \beqna
        \label{eq:der_PT}
             \nabla_{\varthetavecsmall}\thetavec_{*}(\varthetavecmicro) = {\Amat}^{-1} \big( \thetavecmicro_{*} \big) \Bmat_{p;f}(\thetavecmicro_{*},\varthetavecmicro),
        \eeqna
        where $\Amat( \cdot) $ and $\Bmat_{p;f}(\cdot,\cdot)$ are defined in \eqref{eq:def_A_theta} and \eqref{eq:def_B_p_f}, respectively.
        By substituting \eqref{eq:der_PT} in \eqref{new_condition}, we obtain \eqref{17_tmp}.
    \vspace{-0.25cm}
    \customappendix{Proof of Theorem \ref{mcrb_theorem_5_terms}}
    \label{App:naive_mcrb_theorem}

        The derivation of \ac{mse} lower bounds can be done using the Cauchy-Schwarz inequality. Another way is to solve a constrained minimization problem of the \ac{mse} under specific unbiasedness conditions (see, e.g. \cite{PCRB_J, fauss2021variational,menni2011new}). Here, we adopt the second approach to establish the bound.
  
        The \ac{mse} minimization under the constraints in \eqref{pointwise_m_unbiasedness_1} and \eqref{local_m_unbiasedness_1} can be written as the following optimization problem 
        \begin{flalign}
            \min_{\hat{\thetavecsmall}(\xvec) \in \Omega_{\thetavectiny}} \avec^{T} \, \mathbb{E}_{p} [ (\hat{\thetavec}(\xvec) - \thetavec_{*}^{0} ) (\hat{\thetavec}(\xvec) - \thetavec_{*}^{0} )^T  ] \, \avec \hspace{-1.2cm} \label{eq:MCRB_opt} \\ 
            \customlabel{Con0_MCRB}{$\tilde{\text{C}}$.1} \hspace{0.1cm} \scalebox{1}{$\mathbb{E}_{p} [\hat{\thetavec}(\xvec) - \thetavec_{*}^{0} ]$} \hspace{1.6cm} &=&  \scalebox{1}{$\zerovec,$} \hspace{1.3cm} \notag \tikz[remember picture] \node[coordinate,yshift=1em, xshift=-20.7em] (n1) {}; \\ 
            \customlabel{Con1_MCRB}{$\tilde{\text{C}}$.2} \hspace{0.8cm} \scalebox{1}{$\nabla_{\varthetavecsmall} \mathbb{E}_{p; \varthetavecsmall} [\hat{\thetavec}(\xvec) - \thetavec_{*}(\varthetavec) ]\big|_{\varthetavecsmall=\varthetavecsmall_0} $} \hspace{0.2cm} &=& \scalebox{1}{$\zerovec,$} \hspace{1.3cm} \notag  \tikz[remember picture] \node[coordinate,yshift=-0.9em] (n2) {};
            \begin{tikzpicture}[thick,overlay,remember picture]
                    \path (n2) -| node[coordinate] (n3) {} (n1);
                     \draw[decorate,decoration={brace,mirror,amplitude=5pt}]
                            (n1) -- (n3) node[midway, left=4pt] {{s.t.}};
            \end{tikzpicture}
        \end{flalign}
        where $ \avec \in \mathbb{R}^{N_{2}}$ is an arbitrary vector\footnote{The use of $\avec$ is in order to characterize a matrix-valued lower bound via scalarization, where a valid matrix lower bound is obtained only if the minimizing estimator of \eqref{eq:MCRB_opt} is independent of the choice of $\avec$.}.
        According to Claim \ref{claim_der_pt}, under Conditions \ref{cond1}-\ref{cond4}, the constraint \eqref{Con1_MCRB} can be replaced with \eqref{17_tmp}. Using this replacement, the Lagrangian for the optimization problem in \eqref{eq:MCRB_opt} is
        \beqna
        \label{eq:Lagrangian}
            L(\hat{\thetavecmicro}(\xvec), \lambdavec, \Lambdamat) \hspace{-0.25cm} &=& \hspace{-0.25cm} \avec^{T} \mathbb{E}_{p} \big[ (\hat{\thetavecmicro}(\xvec) - \thetavecmicro_{*}^{0} ) (\hat{\thetavecmicro}(\xvec) - \thetavecmicro_{*}^{0} )^{T} \big] \avec \nonumber \hspace{1.2cm} \\ &-& \hspace{-0.25cm} 2 \, \lambdavec^{T} \big(\mathbb{E}_{p} [\hat{\thetavecmicro}(\xvec) ] - \thetavecmicro_{*}^{0} \big) \nonumber \\ &-& \hspace{-0.25cm} 2 \, \text{Tr} \Big\{ \Lambdamat^{T} \big( \mathbb{E}_{p} \big[ \hat{\thetavecmicro} (\xvec) \, \nabla_{\varthetavecsmall} \log p(\xvec;\varthetavecmicro_{0}) \big] \nonumber \\ &-& \hspace{-0.25cm} {\Amat}^{-1}(\thetavecmicro_{*}^{0}) \Bmat_{p;f}(\thetavecmicro_{*}^{0}, \varthetavecmicro_{0}) \big) \big\}
            ,
        \eeqna
        where $2 \, \lambdavec$ and $2 \, \Lambdamat$ are the Lagrangian multipliers associated with Constraints \eqref{Con0_MCRB} and  \eqref{Con1_MCRB}, respectively. We aim to minimize the Lagrangian \ac{wrt} the estimator $\hat{\thetavecmicro}(\xvec)$ at a specific point $\xvec \in \Omega_\xvec$. 
        This minimization of \eqref{eq:Lagrangian} \ac{wrt} $\hat{\thetavecmicro}(\xvec)$ yields 
        \beqna
        \label{est_opt}
            \avec \avec^{T} \big({\hat{\thetavecmicro}}^{(o)}\!(\xvec) - \thetavecmicro_{*}^{0} \big) = \lambdavec + \Lambdamat \, {\nabla_{\varthetavecsmall}}^{\!\!\! T} \log p(\xvec;\varthetavecmicro_{0}),
        \eeqna
            
        The Lagrange multipliers can be found by enforcing the problem constraints on the estimator in \eqref{est_opt}. First, by  taking the expectation of \eqref{est_opt} \ac{wrt}  $p(\xvec;\varthetavecmicro_0)$, one obtains
        \beqna
        \label{cond_on_lambda1}
            \avec \avec^{T} \mathbb{E}_{p} [ \hat{\thetavecmicro}^{(o)}\!(\xvec) - \thetavecmicro_{*}^{0} ] \! = \! \lambdavec \! + \! \Lambdamat \, \mathbb{E}_{p}[ {\nabla_{\varthetavecsmall}}^{\!\!\! T} \! \log p(\xvec;\varthetavecmicro_{0}) ] = \lambdavec,
        \eeqna
        where the last equality is obtained from
        \beqna
        \label{eq:reg_cond_p}
            \mathbb{E}_{p}[ \nabla_{\varthetavecsmall} \log p(\xvec;\varthetavecmicro_{0}) ] = \zerovec,
        \eeqna
        which is a result of Condition \ref{cond4}, i.e. the regularity of $p(\xvec;\vartheta)$. Substituting the pointwise unbiasedness \eqref{Con0_MCRB} in \eqref{cond_on_lambda1} and rearranging,  we obtain $\lambdavec = \zerovec$, implying that the first constraint \eqref{Con0_MCRB} is redundant (similarly to in the case of the conventional \ac{crb}). By substituting $\lambdavec = \zerovec$ in \eqref{est_opt}, we obtain that the estimator that solves the optimization in \eqref{eq:MCRB_opt} satisfies
        \beqna
        \label{est_opt2}
            \avec \avec^{T} \big(\hat{\thetavecmicro}^{(o)}\!(\xvec) - \thetavecmicro_{*}^{0} \big) = \Lambdamat \, {\nabla_{\varthetavecsmall}}^{\!\!\! T} \log p(\xvec;\varthetavecmicro_{0}).
        \eeqna
        Right multiplying \eqref{est_opt2} by $\nabla_{\varthetavecsmall} \log p(\xvec;\varthetavecmicro_{0}) $ and applying expectation \ac{wrt} $p(\xvec;\varthetavecmicro_0)$ results in 
        \beqna
        \label{stage1}
            \avec \avec^{T} \mathbb{E}_{p} [ (\hat{\thetavecmicro}^{(o)}\!(\xvec) - \thetavecmicro_{*}^{0} ) \, \nabla_{\varthetavecsmall} \log p(\xvec;\varthetavecmicro_{0}) ] \hspace{2.15cm} \nonumber \\ = \Lambdamat \, \mathbb{E}_{p} [ {\nabla_{\varthetavecsmall}}^{\!\!\! T} \log p(\xvec;\varthetavecmicro_{0}) \nabla_{\varthetavecsmall} \log p(\xvec;\varthetavecmicro_{0}) ] =  \Lambdamat \,  \Jmat_{p}(\varthetavecmicro_{0}), 
        \eeqna
        where the first equality is obtained from \eqref{eq:reg_cond_p}, and $\Jmat_{p}(\varthetavecmicro)$ is defined in \eqref{J_def}. Now, by substituting the local unbiasedness \eqref{Con1_MCRB} (as expressed in \eqref{17_tmp}) in \eqref{stage1} and rearranging, we obtain 
        \beqna
        \label{stage2}
            \Lambdamat = \avec \avec^{T} {\Amat}^{-1}(\thetavecmicro_{*}^{0}) \Bmat_{p;f}(\thetavecmicro_{*}^{0},\varthetavecmicro_{0}) \Jmat_{p}^{-1}(\varthetavecmicro_{0}).
        \eeqna
        By substituting \eqref{stage2} in \eqref{est_opt2}, we obtain that the estimator that solves the optimization problem in \eqref{eq:MCRB_opt} is given by
        \beqna
        \label{est_opt3}
            \avec \avec^{T} \big(\hat{\thetavecmicro}^{(o)}\!(\xvec) - \thetavecmicro_{*}^{0} \big) \nonumber \hspace{5.5cm} \\ = \avec \avec^{T} {\Amat}^{-1}(\thetavecmicro_{*}^{0}) \Bmat_{p;f}(\thetavecmicro_{*}^{0},\varthetavecmicro_{0}) \Jmat_{p}^{-1}(\varthetavecmicro_{0}) \, {\nabla_{\varthetavecsmall}}^{\!\!\! T} \log p(\xvec;\varthetavecmicro_{0}).
        \eeqna
        To obtain a matrix-valued bound (valid for all $\avec\in \mathbb{R}^{N_{2}}$), we require a single estimator that satisfies \eqref{est_opt3}, $\forall \avec \in \mathbb{R}^{N_{2}}$. 
        This is possible {\em{iff}} the equality condition in \eqref{eq:MVU_bCRB} is satisfied.
        The \ac{mse} of the estimator in \eqref{eq:MVU_bCRB} gives the minimum \ac{mse} for the constrained minimization problem:
        \beqna
        \label{eq:b-CRB}
            \mathbb{E}_{p} [ (\hat{\thetavec}^{(o)}\!(\xvec) -\thetavec_{*}^{0} ) (\hat{\thetavec}^{(o)}\!(\xvec) -\thetavec_{*}^{0} )^{T} ] \hspace{2.8cm} \nonumber \\= {\Amat}^{-1}(\thetavecmicro_{*}^{0}) \Bmat_{p;f}(\thetavecmicro_{*}^{0},\varthetavecmicro_{0}) \Jmat_{p}^{-1}(\varthetavecmicro_{0}) \Bmat_{p;f}^{T}(\thetavecmicro_{*}^{0},\varthetavecmicro_{0}) {\Amat}^{-1}(\thetavecmicro_{*}^{0}).
        \eeqna
        Since this is the minimum \ac{mse} for a locally misspecified unbiased estimator, we obtain the bound in \eqref{eq:MSE_inq_MCRB2}-\eqref{nMCRB_def} on the class of locally misspecified unbiased estimators.
        \vspace{-0.25cm}
    \customappendix{Proof of Claim \ref{cl:MCRB_vs_bCRB}}
    \label{App:CS-MCRB-bCRB}
        According to the Cauchy-Schwarz inequality,
        \beqna 
        \label{Cs1}
            {\mathbb{E}}_{p} [\phivec(\xvec) \phivec^{T}(\xvec) ] \nonumber \hspace{5.5cm} \\ \succeq {\mathbb{E}}_{p} [\phivec(\xvec) \etavec^{T}(\xvec) ] {\mathbb{E}}_{p}^{-1} [ \etavec(\xvec) \etavec^{T}(\xvec) ] {\mathbb{E}}_{p} [\etavec(\xvec) \phivec^{T}(\xvec) ],
        \eeqna
        for any measurable functions $\phivec : \Omega_{\xvec} \to \mathbb{R}^{d_{1}}$ and $\etavec : \Omega_{\xvec} \to \mathbb{R}^{d_{2}}$, 
        provided that ${\mathbb{E}}_{p}[\etavec(\xvec)\etavec^T(\xvec)]$ is a nonsingular matrix.
        Equality is obtained {\em{iff}} 
        \beqna
        \label{eq_CS}
            \phivec(\xvec) = \Wmat \, \etavec(\xvec),  ~~~\forall \xvec \in \Omega_{\xvec},
        \eeqna
        where $\Wmat \in \mathbb{R}^{d_1 \times d_2}$ is independent of $\xvec$ and the equality holds {\em{almost surely}} \ac{wrt} $p(\xvec)$.
        Substituting $\phivecmicro(\xvec) = {\nabla_{\thetavecsmall}}^{\!\!\! T} \log f(\xvec;\thetavecmicro_{*}^{0})$ and $\etavecmicro(\xvec) = {\nabla_{\varthetavecsmall}}^{\!\!\! T} \log p(\xvec;\varthetavecmicro_{0})$ in \eqref{Cs1}, one obtains
        \beqna
        \label{Cs2}
            \Bmat(\thetavecmicro_{*}^{0}) \succeq  \Bmat_{p;f}(\thetavecmicro_{*}^{0},\varthetavecmicro_{0}) \Jmat_{p}^{-1}(\varthetavecmicro_{0}) \Bmat_{p;f}^{T}(\thetavecmicro_{*}^{0},\varthetavecmicro_{0}),
        \eeqna
        where $ \Bmat(\cdot)$, $\Bmat_{p;f}(\cdot,\cdot)$, and $\Jmat_{p}(\cdot)$ are defined in \eqref{eq:def_B_theta}, \eqref{eq:def_B_p_f}, and \eqref{J_def}, respectively. Now, due to Condition \ref{cond3}, we can multiply \eqref{Cs2} from left and right by ${\Amat}^{-1}(\thetavecmicro_{*}^{0})$ to obtain  Claim \ref{cl:MCRB_vs_bCRB}, i.e. 
        \beqna
            \text{\textbf{MCRB}}(\thetavecmicro_{*}^{0}) \succeq \text{\textbf{nMCRB}}_{p}(\thetavecmicro_{*}^{0}, \varthetavecmicro_{0}),
        \eeqna
        where we used the definitions of the bounds in  \eqref{eq:def_MCRB} and \eqref{nMCRB_def}.

    \vspace{-0.25cm}
    \customappendix{Proof of Proposition \ref{theorem_tightest}}
    \label{App:tightest_nMCRB}
        The first part of this proof is along the lines of the proof in Appendix \ref{App:CS-MCRB-bCRB}, where we replace $p(\xvec;\varthetavecmicro)$ with $ \tilde{p}(\xvec;\gammavecmicro)$.
        In particular, substituting $\phivecmicro(\xvec) = {\nabla_{\thetavecsmall}}^{\!\!\! T} \log f(\xvec;\thetavecmicro_{*}^{0})$ and $\etavecmicro(\xvec) = {\nabla_{\gammavecsmall}}^{\!\!\! T} \log \tilde{p}(\xvec;\gammavecmicro_{0})$ in \eqref{Cs1}-\eqref{eq_CS}, we get the relation 
        \beqna
        \label{eq:MSE_inq_MCRB2_Claim5}
            \text{\textbf{MCRB}}(\thetavecmicro_{*}^{0}) \succeq \text{\textbf{nMCRB}}_{\tilde{p}}(\thetavecmicro_{*}^{0}, \varthetavecmicro_{0}),
        \eeqna
        where equality is obtained in \eqref{eq:MSE_inq_MCRB2_Claim5} {\em{iff}} the relation in \eqref{eq:lin_p_tilde} is satisfied (in the almost surely sense). 
       
    \vspace{-0.25cm}
    \customappendix{Proof of Proposition \ref{claim7}}
    \label{App:P_tilde_proof}
    
        \noindent\textbf{P.1 (Valid PDF).} Since $g:\mathbb{R}_{+}\to\mathbb{R}_{+}$ and $p(\xvec;\varthetavecmicro_0) \geq 0$,  it follows from \eqref{eq:gen_shape_ptilde} that $\tilde{p}^{(o)}(\xvec;\gammavecmicro)\ge 0$. Moreover, $c(\gammavecmicro)>0$ because $g>0$ on $\mathbb{R}_{+}$ and $p(\xvec;\varthetavecmicro_0)$ is a \ac{pdf}. Finally, by the normalization factor in \eqref{c_def}, we have $\int_{\Omega_{\xvec}} \tilde{p}^{(o)}(\alphavecmicro;\gammavecmicro)\, \ud \alphavecmicro=1$. Hence, $\tilde{p}^{(o)}(\cdot;\gammavecmicro)$ is a valid \ac{pdf} $\forall \gammavecmicro$, i.e. Property \ref{Prop1} holds.

        \noindent\textbf{P.2 (Pointwise equivalence at $\gammavecmicro_0$).} At $\gammavecmicro_{0} = \thetavecmicro_{*}^{0}$, we have $\frac{f(\xvec; \gammavecsmall_0)}{f(\xvec; \thetavecsmall_{*}^{0})} =1$ and since $g(1) = 1$, \eqref{eq:gen_shape_ptilde} is reduced to
        \beqna
            \tilde{p}^{(o)}(\xvec;\gammavecmicro_{0}) = \frac{1}{c(\gammavecmicro_{0})}  \, p(\xvec; \varthetavecmicro_{0}), ~ \forall \xvec \in \Omega_{\xvec}.
        \eeqna
        Consequently, the normalization factor from \eqref{c_def} satisfies  $c(\gammavecmicro_{0})=1$,  and therefore
        we obtain the pointwise equivalent condition in \eqref{eq:point_wise_equ}, which proves  Property \ref{Prop2}.

        \medskip
        \noindent\textbf{P.3 (Proportional-score condition).}

        Finally, using the definition in \eqref{eq:gen_shape_ptilde}, it can be verified that
        \begin{align}
        \label{eq:der_p_tilde}
            \nabla_{\gammavecsmall} \log \tilde{p}^{(o)}(\xvec;\gammavecmicro) = -\nabla_{\gammavecsmall}\log c(\gammavecmicro) + \frac{g'\!\big(r(\xvec;\gammavecmicro)\big)}{g\!\big(r(\xvec;\gammavecmicro)\big)} \, \nabla_{\gammavecsmall} r(\xvec;\gammavecmicro),
        \end{align}
        where
        $r(\xvec;\gammavecmicro)\triangleq \frac{f(\xvec;\gammavecmicro)}{f(\xvec;\thetavecsmall_*^0)}  >0$. 
        Because $f(\xvec;\thetavecmicro)$ is strictly positive, we have
        \beqna
        \label{r_def}
            \nabla_{\gammavecsmall} r(\xvec;\gammavecmicro) = r(\xvec;\gammavecmicro)\,\nabla_{\gammavecsmall}\log f(\xvec;\gammavecmicro).
        \eeqna
        In addition, using  \eqref{c_def}, it can be verified that
        \beqna
        \label{95}
            \hspace{-0.3cm} \nabla_{\gammavecsmall} \log \hspace{-0.33cm} &c& \hspace{-0.39cm} (\gammavecmicro) = \frac{1}{c(\gammavecmicro)}\int_{\Omega_{\xvec}} g'\!\big(r(\xvec;\gammavecmicro)\big)\,\nabla_{\gammavecsmall} r(\xvec;\gammavecmicro)\, p(\xvec;\varthetavecmicro_0)\, \ud \xvec \nonumber \\ &=& \hspace{-0.28cm} \frac{1}{c(\gammavecmicro)} \mathbb{E}_{p}\!\left[ g'\!\big(r(\xvec;\gammavecmicro)\big)\, r(\xvec;\gammavecmicro)\,\nabla_{\gammavecsmall}\log f(\xvec;\gammavecmicro) \right],
        \eeqna
        where the first equality is obtained assuming the standard interchange condition that permits differentiation under the integral in $c(\gammavecmicro)$ (satisfied under Conditions \ref{cond2} and \ref{cond4}), and the last equality is obtained by substituting \eqref{r_def}.
        Evaluating \eqref{95} at $\gammavecmicro_0 = \thetavecmicro_*^0$ gives $r(\xvec;\gammavecmicro_0) = 1$, $c(\gammavecmicro_0) = 1$, hence
        \beqna
        \label{eq:grad_log_c_at0}
            \nabla_{\gammavecsmall}\log c(\gammavecmicro_0) = g'(1)\,\mathbb{E}_{p}\!\left[\nabla_{\gammavecsmall}\log f(\xvec;\gammavecmicro_0)\right].
        \eeqna
        
        Since $\gamma_0 = \theta_*^{0}$, $\Omega_\gamma = \Omega_\theta$, and $f(x;\gamma) = f(x;\theta)$, the definition of the pseudo-true in \eqref{def:psuedo-true} (under the usual regularity assumptions) implies $\mathbb{E}_{p}\!\left[\nabla_{\gammavecsmall}\log f(\xvec;\gammavecmicro_0)\right] = \zerovec. $
        Thus, $\nabla_{\gammavecsmall}\log c(\gammavecmicro_0)=\zerovec$ by \eqref{eq:grad_log_c_at0}.
        Plugging this result and \eqref{r_def} into \eqref{eq:der_p_tilde} at $\gammavecmicro_0$ (and using $g(1)=1$ and $r(\xvec;\gammavecmicro_0)=1$) yields
        \beqna
            \nabla_{\gammavecsmall}\log \tilde{p}^{(o)}(\xvec;\gammavecmicro_0) = g'(1)\,\nabla_{\gammavecsmall}\log f(\xvec;\gammavecmicro_0).
        \eeqna
 
        This result implies that if we take $\tilde{p}^{(o)}(\xvec;\gammavecmicro)$ to be the pointwise equivalent \ac{pdf}  $ \tilde{p}(\xvec;\gammavecmicro)$, we obtain that the relation in \eqref{eq:lin_p_tilde} is satisfied at $\gammavecmicro_{0}=\thetavecmicro_{*}^{0}$ with $\Wmat = \frac{1}{g'(1)} \Imat_{N_{2}}$ (i.e. Property \ref{Prop3} holds).
        By Proposition \ref{theorem_tightest}, this implies that $\tilde{p}^{(o)}$ attains the tightest bound, i.e., the \ac{mcrb}.
    \vspace{-0.25cm}
    \customappendix{Proof of Theorem \ref{thm:Our_MCRB}}
    \label{appendix_newMCRB}
Theorem \ref{thm:Our_MCRB} is proved by substituting the optimal pointwise equivalent \ac{pdf} that satisfies \eqref{eq:lin_p_tilde} into Proposition \ref{claim2_tmp}.  For $\gammavecmicro_0 = \thetavecmicro_{*}^{0}$ and $\Wmat = \frac{1}{g'(1)} \Imat_{N_{2}}$ (see Property \ref{Prop3}), and by substituting \eqref{eq:lin_p_tilde} in \eqref{eq:def_B_p_f} and \eqref{J_def}, we obtain 
        \begin{subequations}
            \beqna
                \Bmat_{\tilde{p},f}(\thetavecmicro_{*}^{0},\gammavecmicro_{0}) &=& g'(1) \,\,\,  \Bmat(\thetavecmicro_{*}^{0}), \label{eq:def_B_p_tilde_f_psi}
            \\
                \Jmat_{\tilde{p}^{(o)}}(\gammavecmicro_{0}) &=& g'(1)^{2} \, \Bmat(\thetavecmicro_{*}^{0}), \label{eq:J_psi}
            \eeqna
        \end{subequations}   
        where $ \Bmat(\thetavecmicro) $ is defined in \eqref{eq:def_B_theta}.
        
        \noindent\textbf{Revised Unbiasedness:} A naive misspecified estimator \ac{wrt} $\tilde{p}^{(o)}(\xvec; \gammavecmicro)$ as defined in Definition \ref{def:missp_unbias} satisfies \eqref{pointwise_m_unbiasedness_1_tilde} and \eqref{17_tmp_tilde}.
        It can be verified that the pointwise condition in \eqref{new_miss_unbiasdness_cond1} is obtained by substituting $\tilde{p}^{(o)}(\xvec; \gammavecmicro)$ from \eqref{eq:gen_shape_ptilde} in \eqref{pointwise_m_unbiasedness_1_tilde}. By substituting \eqref{eq:lin_p_tilde} and \eqref{eq:def_B_p_tilde_f_psi} in \eqref{17_tmp_tilde} we obtain the condition in \eqref{new_miss_unbiasdness_cond2}.
        Thus, we can conclude that 
        Definition \ref{new_miss_unbiasdness} is obtained by substituting the optimal pointwise equivalent \ac{pdf}, i.e. $\tilde{p}^{(o)}(\xvec; \gammavecmicro)$, in the naive local misspecified unbiasedness from Definition \ref{def:missp_unbias}. 
        
        \noindent\textbf{Revised Theorem:} Replacing the unbiasedness requirements in Proposition \ref{claim2_tmp} with Definition \ref{new_miss_unbiasdness}, we obtain that for a locally misspecified unbiased estimator
        \beqna
        \label{eq:nMCRB_psi}
            {\text{\textbf{MSE}}} \big(\hat{\thetavecmicro}(\xvec),\thetavecmicro_{*}^{0} \big) \succeq {\text{\textbf{nMCRB}}}_{\tilde{p}^{(o)}}(\thetavecmicro_{*}^{0}, \gammavecmicro_{0}).
        \eeqna
        According to Proposition \ref{theorem_tightest}, we obtain
        \beqna
        \label{eq:nMBCRB_psi_MCRB}
            {\text{\textbf{nMCRB}}}_{\tilde{p}^{(o)}}(\thetavecmicro_{*}^{0}, \gammavecmicro_{0}) = {\text{\textbf{MCRB}}}(\thetavecmicro_{*}^{0}).
        \eeqna
        Substituting \eqref{eq:nMBCRB_psi_MCRB} in \eqref{eq:nMCRB_psi}, we obtain \eqref{eq:finally_our_bound}. By substituting \eqref{eq:lin_p_tilde}, \eqref{eq:def_B_p_tilde_f_psi}, and \eqref{eq:J_psi} in \eqref{eq:bias_CRB_p_tilde} we obtain the equality condition \eqref{eq:MVU}.
\vspace{-0.25cm}

\end{document}